\newtheorem{theorem}{Theorem}[section]
\newtheorem{prop}[theorem]{Proposition}
\newtheorem{lemma}[theorem]{Lemma}
\newtheorem{coro}[theorem]{Corollary}
\newtheorem{thm-def}[theorem]{Theorem-Definition}
\newtheorem{def-prop}[theorem]{Definition-Proposition}
\newtheorem{prop-def}[theorem]{Proposition-Definition}
\newtheorem{coro-def}[theorem]{Corollary-Definition}
\theoremstyle{definition}
\newtheorem{defn}[theorem]{Definition}
\newtheorem{remark}[theorem]{Remark}
\newtheorem{exam}[theorem]{Example}
\newcommand{\nc}{\newcommand}
\nc{\tred}[1]{\textcolor{red}{#1}}
\nc{\tblue}[1]{\textcolor{blue}{#1}}
\nc{\tgreen}[1]{\textcolor{green}{#1}}
\nc{\tpurple}[1]{\textcolor{purple}{#1}}
\nc{\btred}[1]{\textcolor{red}{\bf #1}}
\nc{\btblue}[1]{\textcolor{blue}{\bf #1}}
\nc{\btgreen}[1]{\textcolor{green}{\bf #1}}
\nc{\btpurple}[1]{\textcolor{purple}{\bf #1}}
\renewcommand{\frak}{\mathfrak}
\newcommand{\efootnote}[1]{}
\renewcommand{\textbf}[1]{}
\newcommand{\delete}[1]{{}}
\nc{\mlabel}[1]{\label{#1} {{\tt {\tiny{(#1)}}}}\ }
\nc{\mcite}[1]{\cite{#1} {{\tiny\tt (#1)}}\ }
\nc{\mref}[1]{\ref{#1}{{\tiny\tt (#1)}}\ }
\nc{\meqref}[1]{~\eqref{#1}{{\tiny\tt (#1)}}\ }
\nc{\mbibitem}[1]{\bibitem[\bf #1]{#1}}
	\nc{\mlabel}[1]{\label{#1}}  
	\nc{\mcite}[1]{\cite{#1}}  
	\nc{\mref}[1]{\ref{#1}}  
\nc{\meqref}[1]{~\eqref{#1}}
	\nc{\mbibitem}[1]{\bibitem{#1}} 
\nc{\sbar}{, }
\nc{\wvec}[2]{{\scriptsize{ \Big[\begin{array}{c} #1 \\ #2 \end{array}\Big]}}}
\nc{\lp}{\big ( }
\nc{\llp}{\Big (}
\nc{\Llp}{\left (}
\nc{\rp}{\big ) }
\nc{\rrp}{\Big )}
\nc{\Rrp}{\right )}
\nc{\lb}{\big < }
\nc{\llb}{\!\Big \langle }
\nc{\Llb}{\! \left <}
\nc{\rb}{\big >  }
\nc{\rrb}{\Big \rangle \!}
\nc{\Rb}{\Big \rangle\! }
\nc{\length}{{\rm leng}}
\nc{\id}{\mathrm{id}}
\nc{\bin}[2]{ (_{\stackrel{\scs{#1}}{\scs{#2}}})}  
\nc{\binc}[2]{ \big (\! \begin{array}{c} \scs{#1}\\
    \scs{#2} \end{array}\! \big )}  
\nc{\bincc}[2]{  \left ( {\scs{#1} \atop
    \vspace{-1cm}\scs{#2}} \right )}  
\nc{\bs}{\bar{S}}
\nc{\cosum}{\sqsubset}
\nc{\la}{\longrightarrow}
\nc{\rar}{\rightarrow}
\nc{\dar}{\downarrow}
\nc{\dap}[1]{\downarrow \rlap{$\scriptstyle{#1}$}}
\nc{\uap}[1]{\uparrow \rlap{$\scriptstyle{#1}$}}
\nc{\defeq}{\stackrel{\rm def}{=}}
\nc{\disp}[1]{\displaystyle{#1}}
\nc{\dotcup}{\ \displaystyle{\bigcup^\bullet}\ }
\nc{\gzeta}{\bar{\zeta}}
\nc{\hcm}{\ \hat{,}\ }
\nc{\hts}{\hat{\otimes}}
\nc{\barot}{{\otimes}}
\nc{\free}[1]{\bar{#1}}
\nc{\uni}[1]{\tilde{#1}}          
\nc{\hcirc}{\hat{\circ}}
\nc{\lleft}{[}
\nc{\lright}{]}
\nc{\curlyl}{\left \{ \begin{array}{c} {} \\ {} \end{array}
    \right .  \!\!\!\!\!\!\!}
\nc{\curlyr}{ \!\!\!\!\!\!\!
    \left . \begin{array}{c} {} \\ {} \end{array}
    \right \} }
\nc{\longmid}{\left | \begin{array}{c} {} \\ {} \end{array}
    \right . \!\!\!\!\!\!\!}
\nc{\ora}[1]{\stackrel{#1}{\rar}}
\nc{\ola}[1]{\stackrel{#1}{\la}}
\nc{\ot}{\otimes}
\nc{\mot}{{{\sbar}}}
\nc{\otm}{\mot}
\nc{\scs}[1]{\scriptstyle{#1}}
\nc{\subv}{{^{\star}}}
\nc{\cov}{{^{\sharp}}}
\nc{\mrm}[1]{{\rm #1}}
\nc{\dirlim}{\displaystyle{\lim_{\longrightarrow}}\,}
\nc{\invlim}{\displaystyle{\lim_{\longleftarrow}}\,}
\nc{\proofbegin}{\noindent{\bf Proof: }}
\nc{\proofend}{$\quad \square$ \vspace{0.3cm}}
\nc{\sha}{{\mbox{\cyr X}}}  
\nc{\shap}{{\mbox{\cyrs X}}} 
\nc{\shpr}{\diamond}    
\nc{\shplus}{\shpr^+}
\nc{\shprc}{\shpr_c}    
\nc{\msh}{\ast}
\nc{\vep}{\varepsilon}
\nc{\labs}{\mid\!}
\nc{\rabs}{\!\mid}
\newcommand{\Q}{\mathbb{Q}}
\newcommand{\R}{\mathbb{R}}
\newcommand{\Z}{\mathbb{Z}}
\newcommand {\cala}{{\mathcal {A}}}
\newcommand {\calh}{{\mathcal {H}}}
\newcommand {\calp}{{\mathcal {P}}}
\nc{\fraka}{{\frak a}}
\nc{\frakA}{{\frak A}}
\nc{\frakb}{{\frak b}}
\nc{\frakB}{{\frak B}}
\nc{\frakf}{{\frak F}}
\nc{\frakh}{{\frak h}}
\nc{\frakH}{{\frak H}}
\nc{\frakk}{{\frak k}}
\nc{\frakK}{{\frak K}}
\nc{\frakM}{{\frak M}}
\nc{\frakm}{{\frak m}}
\nc{\frakP}{{\frak P}}
\nc{\frakp}{{\frak p}}
\nc{\frakS}{{\frak S}}
\nc{\bfrakM}{\overline{\frakM}}
\nc {\e} {{\epsilon}}
\nc{\fpower}{\calp_{\rm fin}}
\nc{\pfpair}[2]{\Big(\begin{array}{c}\scs{#1} \\ \scs{#2} \end{array} \Big)}
\font\cyr=wncyr10
\font\cyrs=wncyr7
\newcommand {\calhd}{{{\mathcal {H}} _{\Z _{\ge 1}}}}
\newcommand {\rmid} {{\rm Id}}
\newcommand {\fch} {{F^{\mathrm{Ch}}}}
\newcommand {\p} {{\partial}}
\nc{\dd}{p}
\nc{\dep}{\mathrm{dep}}
\nc{\shapt}{\overline{\shap}}
\nc{\emzvsha}{MZV shuffle algebra\xspace} 
\newcommand {\reccop} {{\Delta_{\ge 1}}}	
\newcommand {\descop} {\Delta'}				
\newcommand{\fraccop}{\Delta^{\mathrm{chen}}}	
\nc {\cks}{\text{\textcircled {s}}\xspace}
\nc{\mzvalg}{\mathbf{MZV}}
\nc{\zb}[1]{\textcolor{green}{Bin: #1}}
\nc{\xhy}[1]{\textcolor{blue}{Yu: #1}}
\nc{\li}[1]{\textcolor{purple}{#1}}
\nc{\lir}[1]{\textcolor{purple}{Li: #1}}
\begin{document}

\title[Hopf algebra for MZV shuffle algebra]{Hopf algebras for the shuffle algebra and fractions from multiple zeta values}
%

\author{Li Guo}
\address{Department of Mathematics and Computer Science, Rutgers University at Newark, Newark, New Jersey, 07102, United States}

\author{Wenchuan Hu}
\address{School of Mathematics,
Sichuan University, Chengdu, 610064, P. R. China}
\email{huwenchuan@gmail.com}

\author{Hongyu Xiang}
\address{School of Mathematics,
Sichuan University, Chengdu, 610064, P. R. China}
\email{xianghongyu1@stu.scu.edu.cn}

\author{Bin Zhang}
\address{School of Mathematics,
Sichuan University, Chengdu, 610064, P. R. China}
\email{zhangbin@scu.edu.cn}

\date{\today}

\begin{abstract} The algebra of multiple zeta values (MZVs) is encoded as a stuffle (quasi-shuffle) algebra and a shuffle algebra. The MZV stuffle algebra has a natural Hopf algebra structure which has important applications to MZVs. This paper equips a Hopf algebra structure to the MZV shuffle algebra. The needed coproduct is defined by a recursion through a family of weight-increasing linear operators. To verify the Hopf algebra axioms, we make use of a family of fractions, called Chen fractions, that have been used to study MZVs and also serve as the function model for the MZV shuffle algebra. Applying natural derivations on functions and working in the context of locality, a locality Hopf algebra structure is established on the linear span of Chen fractions. This locality Hopf algebra is then shown to descend to a Hopf algebra on the MZV shuffle algebra, whose coproduct satisfies the same recursion as the first-defined coproduct. Thus the two coproducts coincide, establishing the needed Hopf algebra axioms on the MZV shuffle algebra.
\end{abstract}

\subjclass[2020]{
11M32,	
16T05,	
12H05, 
16T30,  
16W25,	
16S10,	
40B05	
}

\keywords{multiple zeta value, shuffle algebra, Chen  fraction, locality algebra, Hopf algebra, locality}

\maketitle

\vspace{-1cm}

\tableofcontents

\vspace{-1cm}

\setcounter{section}{0}


\section{Introduction}
This paper constructs a Hopf algebra structure on the shuffle algebra from multiple zeta values, where the coproduct satisfies a differential type recursion and is the descendant of a natural coproduct on the space of fractions defining MZVs.

\subsection{Shuffle and quasi-shuffle algebras for MZVs}

Multiple zeta values (MZVs) are the evaluations of the multiple zeta series
\begin {equation}
\mlabel {eq:zeta}
\zeta (s_1, \cdots, s_k)=\sum _{n_1>\cdots > n_k>0}\frac 1{n_1^{s_1}\cdots n_k^{s_k}}
\end{equation}
at positive integer arguments such that the series converges. This means that $s_1>1$, $s_i\in \Z _{\ge 1}, i=2, \cdots, k$. MZVs  and  their  generalizations  have been studied  extensively  from  different  viewpoints since the early 1990s with  connections  to  number theory, algebraic  geometry,  mathematical  physics,  quantum  groups  and  knot  theory \mcite{BBV, BBBL,  BK,Brown, Car, Gon, GM, GZ1, H3, Kre, Ter, Zh}.

Let
$$
\mzvalg: = \Q 1+ \Q\big\{\zeta(s_1,\cdots,s_k)\ \big|\ s_1\geq 2, s_i\geq 1, i\geq 2\,\big\} \subseteq \R
$$
denote the subspace of $\R$ spanned by MZVs and $1$.
A fascinating aspect of their study is the rich algebraic relations among these analytically defined values, especially the stuffle (or quasi-shuffle) relation and the shuffle relation.

Through the encoding of an MZV $\zeta(\vec s)$ by a basis element $[\vec s]$ in the vector space
\begin {equation}
\mlabel {eq:cshafalg}
\calh ^0:=\Q {\bf1} \oplus \bigoplus_{k\in \Z_{>0}, \vec s\in \Z_{\ge 1}^k, s_1>1}\Q[\vec s],
\end{equation}
the stuffle relation of MZVs is interpreted as the algebra homomorphism
\begin{equation}
\zeta_\ast: (\calh^0, *)\to \mzvalg, \quad [\vec{s}]\mapsto \zeta(\vec s).
\mlabel{eq:stufhom}
\end{equation}

On the other hand, an MZV $\zeta(\vec s)$ is expressed as the integral (often named after Chen or Drinfeld or Kontsevich)~\mcite{HO,Zag}
{\small\begin {equation}
	\mlabel {eq:zetai}
	\zeta (s_1, \cdots s_k)=\underbrace {\int_ 0^1\frac {dt}t\int _0^t \frac {dt}t\cdots \int _0^t\frac {dt}t}_{s_1-1}\int _0^t\frac {dt}{1-t}\cdots \underbrace {\int _0^t\frac {dt}t\cdots \int _0^t\frac {dt}t}_{s_k-1}\int _0^t\frac {dt}{1-t}
\end{equation}}
With the encoding of $\zeta(s_1,\ldots,s_k)$  by $x_0^{s_1-1}x_1\cdots x_0^{s_k-1}x_1\in \Q\langle x_0,x_1\rangle$, the space of MZVs is encoded by the subspace $x_0\Q\langle x_0,x_1\rangle x_1\subseteq \Q\langle x_0,x_1\rangle$. Equipping the latter with the shuffle product $\shap$,
the shuffle relation of MZVs is interpreted as the algebra homomorphism
\begin{equation}
	\zeta_\shap: (x_0\Q\langle x_0,x_1\rangle x_1,\shap)\to \mzvalg, \quad
x_0^{s_1-1}x_1\cdots x_0^{s_k-1}x_1\mapsto \zeta(s_1,\ldots,z_k).
\mlabel{eq:shufhom}
\end{equation}

These two encodings of the MZVs are integrated by the linear isomorphism
\begin {equation}
\mlabel {eq:rho}
\rho: \calh^0 \to x_0\Q \langle x_0, x_1 \rangle x_1, \ [s_1, \cdots, s_k]\to x_0^{s_1-1}x_1\cdots x_0^{s_k-1}x_1,
\end{equation}
through which the shuffle product $\shap$ on $x_0\Q\langle x_0,x_1\rangle x_1$ is pulled back to a multiplication $\shapt$ on $\calh^0$,
completing the commutative diagram of linear maps
\begin{equation}
	\begin{split}
\xymatrix{ (\calh^0,\ast,\shapt) \ar^{\rho}[rr] \ar_{\zeta_\ast}[rd] &&
	(x_0\Q\langle x_0,x_1\rangle x_1, \shap) \ar^{\zeta_\shap}[ld]\\
	& \mzvalg & }
\end{split}
\mlabel{eq:sthhom}
\end{equation}
in which $\zeta_*$ and $\zeta_\shap$ are algebra homomorphisms, and  $\rho:(\calhd,\shapt)\to (x_0\Q\langle x_0,x_1\rangle x_1, \shap)$ is an algebra isomorphism.

Consequently,
$$\big \{\zeta_* ([\vec s] *[\vec t])-\zeta_* ([\vec s] \shapt [\vec t])=0\ \big| \ [\vec s], [\vec t]\in \calh^0 \big\}
$$
is a family of $\Q$-linear relations among MZVs, called the {\bf double shuffle relation}.

Denote
\begin{equation}
	\calhd:= \Q {\bf1} \oplus \bigoplus_{k\in \Z_{>0}, \vec s\in \Z_{\ge 1}^k}\Q[\vec s].
\mlabel{eq:hgeq0}
\end{equation}
Ihara, Kaneko and Zagier \mcite {IKZ} extended the commutative diagram in Eq.~\meqref{eq:sthhom} to a commutative diagram
\begin{equation}
	\begin{split}
		\xymatrix{ (\calhd,\ast,\shapt) \ar^{\rho}[rr] \ar_{\zeta_\ast}[rd] &&
			(\Q\langle x_0,x_1\rangle x_1,\shap) \ar^{\zeta_\shap}[ld]\\
			& \mzvalg[T] & }
	\end{split}
	\mlabel{eq:sthhom2}
\end{equation}
and extended the above double shuffle relation to the {\bf extended double shuffle relation}~\mcite{IKZ,Rac}
\begin{equation}
	\big\{ \zeta_*([\vec s]\ast [\vec t]) - \zeta_*([\vec s] \shapt [\vec t]),\ \zeta_*([1] \ast [\vec t]) - \zeta_*([1] \shapt [\vec t])\ |\
	[\vec s], [\vec t]\in\calh^0\big\},
	\mlabel{eq:edsval}
\end{equation}
where $[1]$ is the base element of $\calhd$ corresponding to $1\in\Z_{\ge 1}$.
\begin{theorem} {\bf (\mcite{IKZ,Rac})}
	Let $I_{\mathrm{EDS}}$ be the ideal of $\calhd$ generated by the set
\begin{equation}
	\big\{ [\vec s]\ast [\vec t] - [\vec s] \shapt [\vec t],\ [1] \ast [\vec t] - [1] \shapt [\vec t]\ \big|\
[\vec s], [\vec t]\in\calh^0\big\}.
	\mlabel{eq:eds}
\end{equation}
Then $I_{\mathrm{EDS}}$ is in the kernel of $\zeta_*$.
	\mlabel{thm:eds}
\end{theorem}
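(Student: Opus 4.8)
The plan is to run the Ihara--Kaneko--Zagier and Racinet argument \mcite{IKZ,Rac}, the point of which is that $\zeta_\ast$ is an algebra homomorphism for two products at once. By the commutative diagram \eqref{eq:sthhom2}, $\zeta_\ast\colon (\calhd,\ast)\to\mzvalg[T]$ is an algebra homomorphism, so $\ker\zeta_\ast$ is an ideal of $(\calhd,\ast)$; and since $I_{\mathrm{EDS}}$ is by definition the ideal of $(\calhd,\ast)$ generated by the listed elements, it suffices to show that each generator $[\vec s]\ast[\vec t]-[\vec s]\shapt[\vec t]$ and $[1]\ast[\vec t]-[1]\shapt[\vec t]$, with $[\vec s],[\vec t]\in\calh^0$, lies in $\ker\zeta_\ast$. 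I would in fact prove the uniform statement that $\zeta_\ast(u\ast v-u\shapt v)=0$ for all $u,v\in\calhd$, of which both families are special cases.

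This uniform statement follows at once from \eqref{eq:sthhom2}. On one hand $\zeta_\ast$ is multiplicative for $\ast$, so $\zeta_\ast(u\ast v)=\zeta_\ast(u)\,\zeta_\ast(v)$. On the other hand $\zeta_\ast=\zeta_\shap\circ\rho$ where $\rho\colon(\calhd,\shapt)\to(\Q\langle x_0,x_1\rangle x_1,\shap)$ is an algebra isomorphism and $\zeta_\shap$ is multiplicative for $\shap$, so $\zeta_\ast(u\shapt v)=\zeta_\shap\big(\rho(u)\shap\rho(v)\big)=\zeta_\shap(\rho(u))\,\zeta_\shap(\rho(v))=\zeta_\ast(u)\,\zeta_\ast(v)$ as well; subtracting gives the claim. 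Specializing $(u,v)$ to $([\vec s],[\vec t])$ and to $([1],[\vec t])$ then handles both families of generators. Note that the first family does not involve the variable $T$ at all: $[\vec s]\ast[\vec t]$ and $[\vec s]\shapt[\vec t]$ both stay inside $\calh^0$, on which $\zeta_\ast$ is genuine evaluation, and there the identity is exactly the classical stuffle relation \eqref{eq:stufhom} matched against the classical shuffle relation \eqref{eq:shufhom}, i.e.\ it already follows from the unextended diagram \eqref{eq:sthhom}; only the $[1]$-family forces one into $\mzvalg[T]$.

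Two remarks on where the difficulty really sits. First, the $[1]$-family is, despite appearances, an identity among honest convergent MZVs: in each of $[1]\ast[\vec t]$ and $[1]\shapt[\vec t]$ the only non-convergent word is $[1,\vec t]$, occurring with coefficient $1$ in both, so $[1]\ast[\vec t]-[1]\shapt[\vec t]$ already lies in $\calh^0$ and the computation above records a $T$-free relation among MZVs. Second, and this is the crux, the whole argument rests on the commutativity of \eqref{eq:sthhom2}: the simultaneous existence of a stuffle regularization $\zeta_\ast$ on $(\calhd,\ast)$ and a shuffle regularization $\zeta_\shap$ on $(\Q\langle x_0,x_1\rangle x_1,\shap)$ compatible under $\rho$. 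This is the Ihara--Kaneko--Zagier regularization theorem, the genuine analytic input, which I would quote rather than reprove; the two regularizations differ a priori by a unipotent automorphism of $\mzvalg[T]$ fixing $T$, which acts trivially on the part where the divergent letter $[1]$ occurs at most linearly --- precisely where the EDS generators live --- so no correction term intervenes and the clean computation above is valid. Establishing \eqref{eq:sthhom2} itself would be the main obstacle; granting it, the theorem is the formal ideal-membership statement of the first paragraph.
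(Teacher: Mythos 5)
The paper does not prove Theorem~\ref{thm:eds} at all --- it is quoted from \cite{IKZ,Rac} --- so your proposal has to stand on its own, and as written it does not. The reduction to generators is fine when $I_{\mathrm{EDS}}$ is taken as the ideal for $\ast$ (the kernel of the $\ast$-homomorphism $\zeta_\ast$ is an $\ast$-ideal), but your central ``uniform statement'' $\zeta_\ast(u\ast v-u\shapt v)=0$ for \emph{all} $u,v\in\calhd$ is false: take $u=v=[1]$, so that $[1]\ast[1]-[1]\shapt[1]=(2[1,1]+[2])-2[1,1]=[2]$, whose image is $\zeta(2)\neq 0$. The error is in reading \eqref{eq:sthhom2} as simultaneously giving commutativity and multiplicativity of both legs. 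No single linear extension of MZV evaluation to $\calhd$ can be an algebra homomorphism for both $\ast$ and $\shapt$; indeed $\zeta_\ast([1]\shapt[1])=\zeta_\ast(2[1,1])=T^2-\zeta(2)\neq T^2=\zeta_\ast([1])^2$. In \cite{IKZ} the stuffle-regularized and shuffle-regularized maps are genuinely different and are related by the comparison automorphism of $\R[T]$ built from $\exp\big(\sum_{n\geq 2}\tfrac{(-1)^n\zeta(n)}{n}u^n\big)$; the diagram \eqref{eq:sthhom2} is a shorthand for that theorem, not a literal commutativity you may combine with multiplicativity of both products. So the computation in your second paragraph proves a false statement and cannot be the proof.

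The correct route is exactly the one you relegate to your closing remark, and it should be the proof rather than an afterthought. Both families of generators lie in $\calh^0$ and involve the divergent letter at most linearly: the doubly convergent family follows from the unextended diagram \eqref{eq:sthhom} (finite double shuffle); for $[1]\ast[\vec t]-[1]\shapt[\vec t]$, the stuffle-regularized value of $[1]\ast[\vec t]$ is $T\zeta(\vec t)$ by $\ast$-multiplicativity, the shuffle-regularized value of $[1]\shapt[\vec t]$ is $T\zeta(\vec t)$ by $\shap$-multiplicativity, and since both elements equal $[1,\vec t]$ plus convergent terms, their regularized values have $T$-degree at most one, where the comparison automorphism is the identity; subtracting gives $\zeta_\ast([1]\ast[\vec t]-[1]\shapt[\vec t])=0$. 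That argument (regularization theorem of \cite{IKZ} plus the degree-$\leq 1$ triviality of the correction) is legitimate to quote, and it is essentially the source's proof; but you must run it for the generators only, not via the false global identity. Finally, you should fix which product defines the ideal: if $\shapt$-multiples are allowed, the reduction to generators still works because $\ker\zeta_\ast$ coincides with the kernel of the shuffle-regularized map (the comparison map being an invertible linear map of $\R[T]$), but that too invokes the comparison theorem rather than the literal diagram.
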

It is conjectured that $I_{\mathrm{EDS}}$ is in fact the kernel of $\zeta_*$.
With this connection with MZVs, we will call $(\calhd,*)$ and $(\calhd,\shapt)\cong (\Q\langle x_0,x_1\rangle x_1,\shap)$ the {\bf MZV quasi-shuffle algebra} and {\bf \emzvsha} respectively.

\subsection{Hopf algebra structures on the MZV quasi-shuffle algebra and \emzvsha}

Further understanding of MZVs depends on revealing deeper structures of the MZV quasi-shuffle algebra $(\calhd,*)$ and MZV shuffle algebra $(\calhd,\shapt)$, including their possible Hopf algebra structures.

By the general construction, the MZV quasi-shuffle algebra has a natural enrichment to a Hopf algebra with the deconcatenation coproduct~\mcite{H2}.
Its action on the MZV shuffle algebra has been used to obtain large classes of algebraic relations of MZVs~\mcite{HO,Oh}.
Hopf algebras have played a critical role in the study of motivic MZVs~\mcite{Brown,GF,Rac}.

The situation is quite different for a Hopf algebra structure on the \emzvsha $(\calhd,\shapt)$. 
First, the space $\calhd$ has the natural deconcatenation of the vectors $[s_1,\ldots,s_k]\in \Z^k_{\geq 0}$:
$$ [s_1,\ldots,s_k] \mapsto \sum_{i=0}^k [s_1,\ldots,s_i]\ot [s_{i+1},\ldots,s_k].$$ 
But it is not compatible with the product $\shapt$ on $\calhd$. 

Next note that in the \emzvsha $(\calhd,\shapt)$, the notion of $[\vec{s}]\in \calhd$ is a contracted form of elements in $\Q\langle x_0,x_1\rangle x_1$ via the algebra isomorphism $\rho$ extending Eq.~\meqref{eq:rho}:
\begin {equation}
\mlabel {eq:rho2}
\rho: (\calhd,\shapt) \to (\Q \langle x_0, x_1 \rangle x_1,\shap), \ [s_1, \cdots, s_k]\to x_0^{s_1-1}x_1\cdots x_0^{s_k-1}x_1. 
\end{equation}
%
The shuffle algebra $(\Q \langle x_0,x_1\rangle x_1,\shap)$ has a natural Hopf algebra structure with the deconcatenation coproduct of words. However, this coproduct does not pull back to $(\calhd,\shapt)$. As a simple example, for the word $x_0^2x_1\in \Q\langle x_0,x_1\rangle$ corresponding to $[3]\in \calhd$, its deconcatenation coproduct is
$$ x_0^2x_1\ot 1 + x_0^2\ot x_1+x_0\ot x_0x_1 +1\ot x_0^2x_1$$
in which $x_0\ot x_0x_1$ and $x_0^2\ot x_1$ are not in $\Q\langle x_0,x_1\rangle x_1$ and hence do not correspond to elements in $\calhd$.


{\it The purpose of this paper is to provide a Hopf algebra structure on $(\calhd,\shapt)$.} 
In fact, the definition of the coproduct is motivated by a natural coproduct of Chen fractions which define MZVs. Further applications will be given in later studies~\mcite{GXZ}.

The rest of the introduction will convey some idea of our approach and give an outline of the construction.

\subsection{New Hopf algebra structure on the MZV shuffle algebra}

The goal of Section~\mref{s:main} is to introduce just enough notions to state the main theorem on the Hopf algebra structure on $(\calhd,\shapt)$. The main ingredient of the construction is the coproduct $\reccop$ which can be most easily defined by a recursion derived from a coderivation condition on a family of linear operators on $\calhd$ (Definition \mref{d:reccop}).
Examples are provided to illustrate the recursion. The main result on the corresponding Hopf algebra structure is stated in Theorem~\mref{thm:shuhopf}, followed by a sketch of the proof, and the verification that the coproduct is well defined (Proposition~\mref{p:uniquecoproduct}).

It is might be possible to prove Theorem~\mref{thm:shuhopf} by directly verifying the axioms of a bialgebra. However, checking that the coproduct is an algebra homomorphism is made challenging due to the complexity of the shuffle product in $\calhd$. For example, the shuffle product of two one-dimensional vectors $[s]$ and $[t]$ is the Euler decomposition formula in disguise \mcite{BBG}:
$$ [s]\shapt [t]=\sum _{i=0}^{s-1} \binc{s+i-1}{i} [s+i,t-i]
+ \sum_{j=0}^{t-1} \binc{t+j-1}{j} [t+j,s-j].
$$
In general, a product formula is provided in~\mcite{GX2}, but its complexity makes it impractical for further computations.

We will instead take an indirect yet quite natural approach, using tools from Chen fractions and locality. This is carried out in Section~\mref{s:locchen}. The first tool is the representation of MZVs by functions of the form
$$\frac{1}{(x_{i_1}+x_{i_2}+\cdots+x_{i_k})^{s_1}(x_{i_2}+\cdots+x_{i_k})^{s_2}\cdots x_{i_k}^{s_k}},
$$
where $s_1, \cdots, s_k\in \Z _{\ge 1}$. They are called Chen fractions due their relation to Chen cones~\mcite{GPZ2}.
They are also called MZV fractions~\mcite{GX} since they define MZVs~\mcite{BBBL,H1,LM}:
$$ \zeta(s_1,\ldots,s_k)=\sum_{x_{i_1},\ldots,x_{i_k}\in \Z_{\geq 1}} \frac{1}{(x_{i_1}+x_{i_2}+\cdots+x_{i_k})^{s_1}(x_{i_2}+\cdots+x_{i_k})^{s_2}\cdots x_{i_k}^{s_k}}.
$$
The product of two Chen fractions with disjoint variables satisfies the shuffle relation, giving rise to the shuffle relation of MZVs~\mcite{GPZ2,GX}, thus serving the same purpose as the integral representations of MZVs in Eq.~\meqref{eq:zetai}.
Therefore, a Hopf algebraic structure on Chen fractions is of independent interest and is the motivation for our choice of the Hopf algebra structure on $\calhd$.

Working with the function subspace spanned by Chen fractions has the advantage they are naturally equipped with a family of partial derivatives. Since these derivations increase the degree of a Chen fraction, they can be used to give a recursive definition of a coproduct, by requiring that the derivations are also coderivations with respect to the tensor product (Definition~\mref{d:chencop}).
Furthermore, as the product of two Chen fractions is just the function multiplication, the derivations can be used give an inductive argument to verify the bialgebra axioms.

However, in order for these advantages of Chen fractions to take effect, their space first needs to be closed under the function multiplication, which is not the case. So apply our second tool of locality algebraic structures, motivated by the locality principle in physics contexts and abstracted into an algebraic framework in~\mcite{CGPZ,GPZ2,GPZ1,GPZ3}. A locality set is simply a set equipped with a symmetric binary relation. Imposing compatibility conditions of the relation with various algebraic axioms leads to the corresponding locality algebraic structures, in particular locality Hopf algebras. Applying the locality framework to the space Chen fractions equips the space with a locality Hopf algebra structure (Theorem~\mref{t:fchDeltalocalhopf}). In fact, with the natural derivations, it is a locality multi-differential Hopf algebra, generalizing the classical notion of differential Hopf algebras~\mcite{Ar,Br} as well as providing a locality setting for the recent study of multi-differential algebra and multi-Novikov algebra arising from regularity structures in stochastic PDEs~\mcite{BD,BHZ}.

In Section~\mref{s:mzvhopf}, we show that, through the natural projection $$\frac{1}{(x_{i_1}+x_{i_2}+\cdots+x_{i_k})^{s_1}(x_{i_2}+\cdots+x_{i_k})^{s_2}\cdots x_{i_k}^{s_k}} \mapsto [s_1,\ldots,s_k]
$$
from the space of Chen fractions to $\calhd$, the locality Hopf algebra of Chen fractions descends to a Hopf algebra structure on $(\calhd,\shapt)$ (Theorem~\mref{t:fchDeltalocalhopf}). We then show that the coproduct $\descop$ of the descended Hopf algebra satisfies the same recursion that defines the coproduct $\reccop$ introduced at the beginning of the paper. Therefore the two coproducts agree, showing that $\reccop$ equips $\calhd$ with the desired Hopf algebra structure.

\smallskip

\noindent
{\bf Notations. } For $[\vec s]\in \calhd$ with $\vec s\in \Z^k_{\ge 1}$ denote $|[\vec s]|=|\vec s|=s_1+\cdots+s_k$ for the weight and $\dep(\vec s):=k$ for the {\bf depth}, with the convention that $\Z^0_{\ge 1}=\{{\bf1}\}$ and $|{\bf 1}|=0$ and $\dep(\bf1)=0$.

\section {Statement of the main theorem}
\mlabel{s:main}
This section first introduces a family of linear operators on the \emzvsha $(\calhd,\shapt)$. Requiring that these linear operators behave like a shifted coderivations gives rise to a recursion that defines a comultiplication on $\calhd$. We then state our main theorem that the \emzvsha
with this comultiplication is a Hopf algebra and give an outline of the proof.

\subsection{Statement of the main theorem and an outline of the proof}
\mlabel{ss:mainthm}

With the notation in Eq.\meqref{eq:hgeq0}, for $i\geq 1$, define the linear map
{\small
\begin{equation}
\begin{split}
	\delta_i: \calhd& \longrightarrow \calhd,\\
{\bf1}&\mapsto 0, \\
	[s_1,\cdots,s_k]& \mapsto \left\{\begin{array}{ll}
			\sum_{j=1}^i s_j[s_1,\cdots, s_j+1,\cdots, s_i,\cdots,s_k], & i\le k,  \\
			0,& i>k.
		\end{array}\right.
\end{split}
\mlabel{eq:deltadef}
\end{equation}
}
By convention, we take $\delta_i=0$  when  $i\le 0$.
For example,
$$ \delta_1([2,1])=2[3,1], \quad \delta_2([2,1])=2[3,1]+[2,2], \quad
\delta_3([2,1])=0.$$

\begin{remark}
Unlike the derivations have appeared in the study of MZVs~\mcite{HO,IKZ}, the linear maps $\delta_i$ are not derivations, even though are modeled after the partial derivatives on fractions in Eq.~\meqref{eq:partial}.
As a simple example,
$$\delta_1([1]\shapt [1]) =\delta_1([2[1,1]])=2[2,1].$$
But
$$\delta_1([1])\shapt [1]+[1]\shapt\delta_1([1])=[2]\shapt[1]+[1]\shapt[2]=4[2,1]+2[1,2]\neq \delta_1([1]\shapt [1]).
$$
It would be interesting to explore their relations.
\end{remark}

Denote
\begin{equation}
	\dd_i:=\delta_i-\delta_{i-1}, \quad  i\geq 1.
	\mlabel{eq:pij}
\end{equation}
We display the following basic properties of these operators for later use.
\begin{lemma} The following equalities hold.
\begin{align}
\dd_{i} [s_1,\ldots,s_k]&=s_i [s_1,\ldots,s_{i-1},s_i+1,s_{i+1},\ldots,s_k], \quad 1\leq i\leq k,
\mlabel{eq:ptos}\\
\dd_{k+1} [s_1,\ldots,s_k]&=-\sum_{j=1}^k s_j[s_1,\cdots, s_j+1,\cdots, s_k],\mlabel{eq:drecur} \\
\dd_{i} [s_1,\ldots,s_k]&=0, \quad i\leq 0 \text{ or } i\geq k+2,
\mlabel{eq:ptos3}\\
\dd_{i}\dd_{j}&=\dd_{j}\dd_{i}, \quad i, j\in \Z,
\mlabel{eq:pcomm}\\
\delta_i&=\sum_{j=1}^i \dd_{j}, \quad  i\in \Z, \mlabel{eq:dtop}\\
	\delta_i \delta_j&=\delta_j \delta_i, \quad i, j\in \Z.
\mlabel{eq:dcomm}\\
	[s_1, s_2,\ldots,s_k ]
	&=\frac{\dd_{1}^{s_1-1} \dd_{2}^{s_2-1}\cdots  \dd_{k}^{s_k-1}}{(s_1-1)!(s_2-1)!\cdots (s_k-1)!}[1,1,\cdots,1]. \mlabel{eq:dalggen}
\end{align}
\mlabel{l:comm}
\end{lemma}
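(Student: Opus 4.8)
The plan is to prove the seven identities in a logical order where each one uses only the definitions of $\delta_i$ and $\dd_i$ together with previously established identities. I would begin with \eqref{eq:ptos}, which is the most direct: by definition $\dd_i = \delta_i - \delta_{i-1}$, and for $1 \le i \le k$ both $\delta_i$ and $\delta_{i-1}$ are given by the first branch of \eqref{eq:deltadef}, so the difference telescopes, leaving only the $j = i$ term $s_i[s_1,\ldots,s_i+1,\ldots,s_k]$. For \eqref{eq:drecur}, apply $\dd_{k+1} = \delta_{k+1} - \delta_k$; here $\delta_{k+1}$ falls into the $i > k$ branch and so is $0$ (or, interpreted via the first branch with the sum running to $\min(i,k)=k$, one sees the sum is over $j=1,\ldots,k$), while $\delta_k[s_1,\ldots,s_k] = \sum_{j=1}^k s_j[s_1,\ldots,s_j+1,\ldots,s_k]$, giving the claimed value with the minus sign. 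For \eqref{eq:ptos3}: when $i \le 0$ both $\delta_i$ and $\delta_{i-1}$ vanish by the convention $\delta_i = 0$ for $i \le 0$; when $i \ge k+2$, both $\delta_i$ and $\delta_{i-1}$ are $0$ on depth-$k$ vectors since their indices exceed $k$; and when $i = k+1$ identity \eqref{eq:drecur} already handled it (though it is nonzero there, so \eqref{eq:ptos3} only claims vanishing outside $\{1,\ldots,k+1\}$, which is exactly the two cases just checked).

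Next I would establish \eqref{eq:dtop}, $\delta_i = \sum_{j=1}^i \dd_j$, which for $i \ge 1$ is the trivial telescoping $\sum_{j=1}^i (\delta_j - \delta_{j-1}) = \delta_i - \delta_0 = \delta_i$ using $\delta_0 = 0$, and for $i \le 0$ both sides are $0$ by convention (with the empty sum being $0$). With \eqref{eq:ptos}, \eqref{eq:drecur}, and \eqref{eq:ptos3} in hand, the commutativity \eqref{eq:pcomm} of the $\dd_i$ is the technical heart of the lemma. The cleanest way is to check it on a basis element $[s_1,\ldots,s_k]$ for all pairs $i,j$, using the explicit action of each $\dd_\ell$ as a "raise the $\ell$-th coordinate by $1$, multiply by the old $\ell$-th coordinate" operator (for $1 \le \ell \le k$), the special boundary formula for $\ell = k+1$, and zero otherwise. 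The case both $i,j \le k$ with $i \ne j$ is immediate since the two operators act on different coordinates and the scalar factors are read off before the other operator changes anything; the case $i = j \le k$ is trivial. The subtle cases involve the boundary index: if $i \le k$ and $j = k+1$ (or vice versa), then applying $\dd_{k+1}$ first produces a sum of depth-$k$ vectors on which $\dd_i$ then acts, while applying $\dd_i$ first raises the depth-$k$ vector's $i$-th coordinate (still depth $k$) and then $\dd_{k+1}$ acts; a short bookkeeping of the scalar coefficients shows the two agree. Here the product-rule-like structure $s_i$ becomes $s_i + 1$ after a $\dd_i$ or $s_i(s_i+1)$ type factors must be tracked carefully — this is the main obstacle, purely combinatorial but requiring care with which coordinate has already been incremented. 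Finally the cases where one of $i,j$ exceeds $k+1$ reduce, after possibly one application, to the already-known cases or to zero.

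Once \eqref{eq:pcomm} is proved, \eqref{eq:dcomm} follows formally: by \eqref{eq:dtop}, $\delta_i\delta_j = \big(\sum_{a=1}^i \dd_a\big)\big(\sum_{b=1}^j \dd_b\big) = \sum_{a,b} \dd_a\dd_b = \sum_{a,b}\dd_b\dd_a = \delta_j\delta_i$, with the middle step justified termwise by \eqref{eq:pcomm} (and the $i \le 0$ or $j \le 0$ cases trivial since then one factor is $0$). Lastly, \eqref{eq:dalggen} is proved by induction on the weight $|\vec s| = s_1 + \cdots + s_k$. The base case $s_1 = \cdots = s_k = 1$ is the identity $[1,\ldots,1] = [1,\ldots,1]$ since all exponents of the $\dd_\ell$ are zero. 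For the inductive step, pick any index $\ell$ with $s_\ell \ge 2$; by \eqref{eq:ptos}, $\dd_\ell[s_1,\ldots,s_\ell - 1,\ldots,s_k] = (s_\ell - 1)[s_1,\ldots,s_\ell,\ldots,s_k]$, so $[s_1,\ldots,s_k] = \frac{1}{s_\ell - 1}\dd_\ell[s_1,\ldots,s_\ell - 1,\ldots,s_k]$; apply the inductive hypothesis to the lower-weight vector $[s_1,\ldots,s_\ell - 1,\ldots,s_k]$, and then use \eqref{eq:pcomm} to move the extra $\dd_\ell$ into position, combining $\frac{1}{s_\ell - 1}\cdot\frac{1}{(s_\ell - 2)!} = \frac{1}{(s_\ell - 1)!}$. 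One should note that the order of applying the $\dd_j$'s in \eqref{eq:dalggen} is immaterial precisely by \eqref{eq:pcomm}, which is why that identity is placed before this one. The only mild subtlety is that $\dd_\ell$ must be applied to a vector of depth exactly $k$ throughout (it never hits the boundary index $k+1$ since all coordinates being raised are among $1,\ldots,k$), so the clean formula \eqref{eq:ptos} applies at every step.
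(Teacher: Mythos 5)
Your proposal is correct and follows essentially the same route as the paper: the first three identities are read off directly from the definition of $\dd_i=\delta_i-\delta_{i-1}$ and the convention $\delta_i=0$ for $i\le 0$, commutativity \eqref{eq:pcomm} is checked case-by-case on basis elements using \eqref{eq:ptos}--\eqref{eq:ptos3} (including the boundary index $k+1$), \eqref{eq:dtop} is the telescoping sum, \eqref{eq:dcomm} follows formally from \eqref{eq:pcomm} and \eqref{eq:dtop}, and \eqref{eq:dalggen} comes from applying \eqref{eq:ptos} repeatedly (your induction on weight). The paper's proof is just a terser version of the same argument, and your extra bookkeeping for the $j=k+1$ case of \eqref{eq:pcomm} checks out.
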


\begin{proof}
Eqs.~\meqref{eq:ptos} -- \meqref{eq:ptos3} follow from the definitions.

Eqs.~\meqref{eq:pcomm} and \meqref{eq:dtop} follow from Eqs.~\meqref{eq:ptos} -- \meqref{eq:ptos3}.

Eq.~\meqref{eq:dcomm} follows from Eqs.~\meqref{eq:pcomm} and \meqref{eq:dtop}.

Eq.~\meqref{eq:dalggen} follows from applying Eq.~\meqref{eq:ptos} repeatedly.
\end{proof}

For a linear operator $A$ on $\calhd$ and a sequence of linear operators $f_i, i\geq 0$ on $\calhd$, we define the {\bf shifted tensor}
\begin{equation}
\begin{split}
A\cks f_i: \calhd\otimes \calhd&\longrightarrow \calhd\otimes \calhd, \\
[\vec s]\otimes [\vec t]&\mapsto
	A([\vec s])\otimes f_{i-\dep(\vec s)}([\vec t]).
\end{split}
	\mlabel{eq:shiftten}
\end{equation}

We now introduce the main notion for this study.
\begin{defn}
	\mlabel{d:reccop}
Define a linear map
\begin{equation}
\reccop : \calhd\longrightarrow \calhd\otimes \calhd
\mlabel{eq:recop}
\end{equation}
by the following recursion.
	\begin{enumerate}
		\item  $\reccop({\bf 1}):={\bf 1}\otimes {\bf 1}$;
		\item $\reccop([\{1\}^k]):=\sum\limits_{j=0}^{k}[\{1\}^j]\otimes [\{1\}^{k-j}]$. Here we use the abbreviation $[\{1\}^k]=[\underbrace{1,\ldots,1}_{k}]$;
\item \mlabel{it:reccop3}
For $s_i\geq 1$ and $1\leq i\leq k$, define
\begin{equation}
\mlabel{eq:reccop3}
\begin{split}
\reccop([s_1,\ldots,s_{i-1},s_i+1,s_{i+1},\ldots,s_k])
	:=\frac{1}{s_i} \Big(\id \cks \dd_i+\dd_i\ot \id\Big) \reccop([s_1,\ldots,s_k]).
\end{split}
\end{equation}
	\end{enumerate}
\end{defn}
We note that iterations of the recursion in Eq.~\meqref{eq:reccop3} involves the choice of $i$. The fact that it gives rise to a well-defined map $\reccop$ will be proved in Proposition~\mref{p:uniquecoproduct}.

The third condition amounts to requiring that $\delta_i$ is a {\bf shifted coderivation} in the sense that
\begin{equation}
\reccop \delta_i=(\id\cks \delta_i+\delta_i\otimes \id) \reccop, \quad i\geq 0.
	\mlabel{eq:dalgrec}
\end{equation}
Equivalently,
\begin{equation}
	\reccop \dd_i=(\id\cks \dd_i+\dd_i\otimes \id) \reccop, \quad i\geq 0.
	\mlabel{eq:dalgrec2}
\end{equation}

\begin{remark}
We recall a commonly used notation for the differentials in homological algebra, differential graded algebras, in particular differential Hopf algebras~\mcite{Ar,Br,Sch} which we will generalized to the locality setting later in the paper (Definition~\mref{de:ldha} and Theorem~\mref{t:fchDeltalocalhopf}).
Let $d:A\to A$ be a derivation on a graded algebra $A$, often further requiring $d^2=0$. Denoting
$$(\omega \otimes d)(a\otimes b):=\omega(a)\otimes d(b)=(-1)^{\mathrm{deg} (a)}a\otimes d(b)$$
leads to the basic notion of {\bf tensor product differential} which is denote by $\bar{d}$ in~\mcite{Ar} and $d_\ot$ in \mcite{Sch}:
$$ \bar{d}:=d_\otimes:=\omega \ot d+d\ot \id.$$
Now denote $\delta_i(a):=(-1)^{i}d(a), i\in \Z$. Then our shifted tensor $\id \cks \delta_0$ recovers $\omega \ot d$:
$$ (\id \cks \delta_0)(a\ot b) =a\ot \delta_{-\mathrm{\deg}(a)}(b)
=a\ot (-1)^{-\mathrm{\deg}(a)}d(b)=(-1)^{\mathrm{\deg}(a)}a \ot d(b)
= \omega(a)\ot d(b).$$
Consequently, $ \id \cks \delta_0+\delta_0\otimes \id$ in Eq.~\meqref{eq:dalgrec} recovers $\bar{d}$ and $d_\ot$.
	\mlabel{rk:twist}
\end{remark}

We give some examples to illustrate the recursion in Eq.~\meqref{eq:reccop3} that defines $\reccop$.
Some values of the coproduct have the well-recognized pattern of the deconcatenation coproduct, such as
\begin{align*}
\reccop([1,1])&={\bf 1}\otimes [1,1]+[1]\otimes [1]+[1,1]\otimes {\bf 1}, \\
\reccop([2,1])&={\bf 1}\otimes [2, 1]+[2]\otimes [1]+[2,1]\otimes {\bf 1},\\
\reccop([3,1])&={\bf 1}\otimes [3,1]+[3]\otimes [1]+[3,1]\otimes {\bf 1},\\
\reccop([2,1,1])&={\bf 1}\otimes [2,1,1]+[2]\otimes [1,1]+[2,1]\otimes [1]+[2,1,1]\otimes {\bf 1}.
\end{align*}
Most other values have extra terms. For example,
\begin{align*}
	\reccop([1,2])&={\bf 1}\otimes [1,2]+[1]\otimes [2]-[2]\otimes [1]+[1,2]\otimes {\bf 1}, \\
\reccop([2,2])&={\bf 1}\otimes [2, 2]+[2]\otimes [2]-2[3]\otimes [1]+[2,2]\otimes {\bf 1}, \\
\reccop([1,3])&={\bf 1}\otimes [1,3]+[1]\otimes [3]-[2]\otimes [2]+[3]\otimes [1]+[1,3]\otimes {\bf 1},\\
	\reccop([1,2,1])&={\bf 1}\otimes [1,2,1]+[1]\otimes [2,1]-[2]\otimes [1,1]+[1,2]\otimes [1]+[1,2,1]\otimes {\bf 1},\\
	\reccop([1,1,2])&={\bf 1}\otimes [1,1,2]+[1]\otimes [1,2]+[1,1]\otimes [2]-[2,1]\otimes [1]-[1,2]\otimes [1]+[1,1,2]\otimes {\bf 1}.
\end{align*}

To see how the recursion applies, we give details for one example.
\begin{align*}
\reccop([1,2])=&\reccop (\dd_2 [1,1])\\
=&\big(\id\cks \dd_2+\dd_2\otimes \id \big)\reccop([1,1])\\
=&\big(\id\cks \dd_2+\dd_2\otimes \id \big)({\bf 1}\otimes [1,1]+[1]\otimes [1]+[1,1]\otimes {\bf 1})\\
=&{\bf 1}\otimes \dd_2([1,1])+[1]\otimes \dd_1([1])+[1,1]\otimes \dd_0({\bf 1})\\
&+\dd_2({\bf 1})\otimes [1,1]+\dd_2([1])\otimes [1]+\dd_2([1,1])\otimes {\bf 1}\\
=&{\bf 1}\otimes [1,2]+[1]\otimes [2]-[2]\otimes [1]+[1,2]\otimes {\bf 1}.
\end{align*}

It would be interesting to find possible implications of these coproducts to MZVs, especially the terms with negative coefficients.

Define a linear map
\begin{equation}
	\varepsilon_{\geq 1}: \calhd\longrightarrow \Q, \quad
	\left\{\begin{array}{l} \varepsilon_{\geq 1}({\bf1}):=1, \\
		\varepsilon_{\geq 1}([\vec s]):=0, \vec s\in \Z_{\ge 1}^k,\  k\ge 1.
	\end{array}
	\right.
	\mlabel{eq:counit}
\end{equation}
Here is our main theorem in this study.
\begin{theorem}
\begin{enumerate}
\item The recursion for $\reccop$ in Definition~\mref{d:reccop} is well defined and unique.
\mlabel{i:shuhopf1}
\item	The quintuple $(\calhd, \shapt, \reccop, {\bf1}, \vep_{\ge 1})$  is a connected bialgebra and hence a Hopf algebra.
\mlabel{i:shuhopf2}
\end{enumerate}
\mlabel{thm:shuhopf}
\end{theorem}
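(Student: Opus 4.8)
The plan is to prove the two parts by rather different means: part~\mref{i:shuhopf1} by a direct argument with the weight‑raising operators $\dd_i$, and part~\mref{i:shuhopf2} by the indirect route through Chen fractions and locality sketched in the introduction, which sidesteps the complicated shuffle product on $\calhd$.

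For part~\mref{i:shuhopf1}, put $D_i:=\id\cks \dd_i+\dd_i\ot \id$ on $\calhd\ot\calhd$ for $i\ge0$, so that condition~\mref{it:reccop3} of Definition~\mref{d:reccop} reads $\reccop([s_1,\ldots,s_i+1,\ldots,s_k])=\tfrac1{s_i}D_i\,\reccop([s_1,\ldots,s_k])$. The first step is to check that the $D_i$ commute pairwise: since each $\dd_j$ preserves depth by Lemma~\mref{l:comm}, expanding $D_iD_j$ on a tensor $[\vec s]\ot[\vec t]$ yields four terms, each invariant under $i\leftrightarrow j$ once $\dd_a\dd_b=\dd_b\dd_a$ (Eq.~\meqref{eq:pcomm}) is invoked, whence $D_iD_j=D_jD_i$. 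Combined with the presentation of $[s_1,\ldots,s_k]$ in Eq.~\meqref{eq:dalggen}, commutativity forces
\[
\reccop([s_1,\ldots,s_k])=\frac{1}{(s_1-1)!\cdots(s_k-1)!}\,D_1^{\,s_1-1}\cdots D_k^{\,s_k-1}\Big(\sum_{j=0}^{k}[\{1\}^{j}]\ot[\{1\}^{k-j}]\Big),
\]
and this value does not depend on the order in which the increments in Eq.~\meqref{eq:reccop3} are carried out. Since $s_i\,(s_i-1)!=s_i!$, a one‑line verification shows that the displayed formula does satisfy the recursion for every admissible $i$, which gives simultaneously the existence and the uniqueness asserted in Proposition~\mref{p:uniquecoproduct}.

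For part~\mref{i:shuhopf2}, the counit axiom for $\vep_{\ge1}$ and connectedness are immediate --- the recursion is homogeneous for the weight grading and the weight‑zero part of $\calhd$ is $\Q{\bf 1}$, so a connected bialgebra structure automatically promotes to a Hopf algebra --- and the real content is coassociativity and, especially, the fact that $\reccop$ is an algebra morphism for $\shapt$, which is impractical to check head‑on because of the complexity of the shuffle product. The alternative is to realize $\calhd$ as a quotient of the span $\fch$ of Chen fractions, where the product is ordinary multiplication of functions and the $\dd_i$ lift to genuine partial derivatives. The steps: (i) equip $\fch$ with the locality relation ``disjoint variable sets'' together with the induced locality product and the partial derivatives; (ii) define a coproduct on $\fch$ by demanding that these derivatives be locality shifted coderivations, mirroring Definition~\mref{d:chencop}; (iii) verify the locality bialgebra axioms on $\fch$, where the derivation hypothesis and the Leibniz rule reduce the morphism and coassociativity axioms to inductions on total degree that close up precisely because the product there is function multiplication, thereby obtaining the locality multi‑differential Hopf algebra of Theorem~\mref{t:fchDeltalocalhopf}; (iv) check that the projection $\pi\colon\fch\to\calhd$ sending $\frac1{(x_{i_1}+\cdots+x_{i_k})^{s_1}\cdots x_{i_k}^{s_k}}$ to $[s_1,\ldots,s_k]$ intertwines products --- here the shuffle relation for products of Chen fractions with disjoint variables identifies the descended product with $\shapt$, cf.~\mcite{GPZ2,GX} --- as well as derivations and counits, hence coproducts, so that the locality Hopf structure descends to a genuine Hopf algebra on $(\calhd,\shapt)$ with some coproduct $\descop$; and (v) note that the coderivation property of $\descop$ says precisely that it obeys the recursion of Definition~\mref{d:reccop}, so $\descop=\reccop$ by part~\mref{i:shuhopf1}, and therefore $(\calhd,\shapt,\reccop,{\bf 1},\vep_{\ge1})$ inherits all the bialgebra and Hopf axioms from the descended structure.

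The step I expect to be the genuine obstacle is the interplay of (iii) and (iv): the locality relation on $\fch$ must be fine enough that the coproduct is well defined (a fraction may only be split along a partition of its variable set) yet coarse enough that the morphism axiom reduces to Leibniz‑rule bookkeeping, and one must then verify that every structure map on $\fch$ is constant on the fibres of $\pi$ --- in particular that the lift of the ``boundary'' operator $\dd_{k+1}$, which on the fraction side is a derivative in the outermost variable, is compatible with the identifications among Chen fractions built from different variable sets. Once this dictionary is set up, matching $\descop$ with $\reccop$ in step~(v) is routine.
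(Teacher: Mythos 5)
Your proposal is correct and follows essentially the same route as the paper: part (i) is the paper's own argument in Proposition~\ref{p:uniquecoproduct} (pairwise commutation of the operators $\id \cks \dd_i+\dd_i\ot \id$ via Eq.~\eqref{eq:pcomm} together with the presentation Eq.~\eqref{eq:dalggen}), and part (ii) reproduces the paper's three steps: the locality multi-differential Hopf algebra on Chen fractions (Theorem~\ref{t:fchDeltalocalhopf}), its descent along $\pi$ to $(\calhd,\shapt)$ (Theorem~\ref{t:deschopf}), and the identification $\descop=\reccop$ via the coderivation recursion (Proposition~\ref{p:copcompatible}). The ``obstacle'' you flag is exactly what the paper resolves in Lemmas~\ref{lem:nodependchoice}--\ref{lem:deltacommuteDeltage1}, so no new idea is needed.
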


\begin{proof} [Outline of the proof of Theorem~\mref{thm:shuhopf}]
The proof of Item \meqref{i:shuhopf1} will be given in Proposition~\mref{p:uniquecoproduct}.

The proof of Item~\meqref{i:shuhopf2} is divided into the following three steps.
\begin{enumerate}
	\item[{\bf Step 1.}] Construct a locality Hopf algebra on Chen fractions (Theorem~\mref{t:fchDeltalocalhopf}). It is in fact a locality differential Hopf algebra;
	\item[{\bf Step 2.}] Show that this locality Hopf algebra descends to a Hopf algebra structure on $\calhd$ (Theorem~\mref{t:deschopf});
	\item[{\bf Step 3.}] Show that the coproduct $\descop$ of the descended Hopf algebra structure on $\calhd$ satisfies the same recursion that defines the coproduct $\reccop$ in Definition~\mref{d:reccop} (Proposition \mref{p:copcompatible}). Hence $\descop$ coincides with the coproduct $\reccop$. Therefore, the structure in Theorem~\mref{thm:shuhopf}.\meqref{i:shuhopf2} is also a Hopf algebra
\end{enumerate}
This completes the proof of Theorem~\mref{thm:shuhopf}.
\end{proof}

\subsection{Well-definedness and uniqueness of the coproduct}
\mlabel{ss:unique}
The purpose of this subsection is to prove Theorem~\mref{thm:shuhopf}.\meqref{i:shuhopf1}, that the coproduct $\reccop$ is well defined and unique.

\begin{prop}
The recursion for $\reccop$ in Definition \mref{d:reccop} is well defined and unique.
\mlabel{p:uniquecoproduct}
\end{prop}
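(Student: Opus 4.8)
The plan is to show that although the recursion in Eq.~\meqref{eq:reccop3} expresses $\reccop([s_1,\ldots,s_k])$ in terms of lower-weight values via a choice of an index $i$ with $s_i\geq 2$, the result is independent of that choice, so that together with the base cases in (1) and (2) it determines $\reccop$ on all of $\calhd$. First I would reformulate the recursion: for a tuple $\vec s=(s_1,\ldots,s_k)\in\Z_{\geq1}^k$, iterating Eq.~\meqref{eq:reccop3} down to the all-ones tuple and using Eq.~\meqref{eq:dalggen} suggests the closed form
\begin{equation}
\reccop([s_1,\ldots,s_k])
=\frac{1}{(s_1-1)!\cdots(s_k-1)!}\,
\bigl(\id\cks \dd_1+\dd_1\ot\id\bigr)^{s_1-1}\cdots
\bigl(\id\cks \dd_k+\dd_k\ot\id\bigr)^{s_k-1}\,
\reccop([\{1\}^k]).
\mlabel{eq:closedform}
\end{equation}
So the key point becomes: the operators $T_i:=\id\cks\dd_i+\dd_i\ot\id$ on $\calhd\ot\calhd$ pairwise commute when applied in the relevant range, and each $T_i$ acting on a tensor whose left-hand depth is fixed behaves predictably under the depth-shift built into $\cks$. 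Establishing commutativity of the $T_i$ is exactly what makes the order of the index choices in the recursion irrelevant.

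The key steps, in order, are: (a) Show the $T_i$ are well defined on the graded pieces and that, on the span of tensors $[\vec u]\ot[\vec v]$ with $\dep(\vec u)$ and $\dep(\vec v)$ fixed, one has $T_iT_j=T_jT_i$ for all $i,j$; this reduces to the commutativity $\dd_i\dd_j=\dd_j\dd_i$ from Eq.~\meqref{eq:pcomm} together with the bookkeeping that the shift $i-\dep(\vec s)$ in Eq.~\meqref{eq:shiftten} is unaffected by applying a $\dd$ on the left factor (since $\dd_j$ preserves depth when it is nonzero, by Eqs.~\meqref{eq:ptos}--\meqref{eq:ptos3}) — note $\dd_{k+1}$ on a depth-$k$ tuple does change nothing about depth either. (b) Deduce that any two descending paths in the recursion from $[s_1,\ldots,s_k]$ to $[\{1\}^k]$ produce the same operator, hence the same value, giving well-definedness of $\reccop$ on each $[\vec s]$. (c) Check consistency at the interface between case (3) and the base cases: when some $s_i+1$ is reduced all the way so that the argument becomes $[\{1\}^k]$, the formula must reproduce case (2), and when $k$ drops this does not happen since the recursion never changes depth — so the only base cases invoked are (1) and (2), and these are consistent because applying $T_i$ to $\sum_j[\{1\}^j]\ot[\{1\}^{k-j}]$ and re-deriving never reaches a contradiction. (d) Finally, observe uniqueness is automatic: any linear map satisfying (1)--(3) must agree with $\reccop$ on $[\{1\}^k]$ and on all higher tuples by induction on weight, since every $[\vec s]$ with $|\vec s|>k=\dep(\vec s)$ has some $s_i\geq2$ and (3) then forces its value in terms of strictly lower weight.

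The main obstacle I anticipate is step (a), specifically the interaction between the commutativity of the $\dd_i$ and the depth-shift in $\cks$. One has to be careful that $T_i$ genuinely commutes with $T_j$ as operators on $\calhd\ot\calhd$ rather than merely formally: the term $\dd_i\ot\id$ changes the left-hand factor, and although it preserves depth when nonzero, the edge case is $\dd_{k+1}$ applied to a depth-$k$ tuple on the left (Eq.~\meqref{eq:drecur}), which outputs a sum of depth-$k$ tuples — so depth is still preserved and the shift index $i-\dep$ in a subsequent $\cks$ is unchanged; confirming there is no off-by-one issue here is the delicate part. A clean way to sidestep subtle case analysis is to prove Eq.~\meqref{eq:closedform} is \emph{internally consistent} by checking that swapping two adjacent reduction steps (reducing $s_i$ then $s_j$ versus $s_j$ then $s_i$, $i\neq j$) yields the same result, which is a single commutation $T_iT_j=T_jT_i$ on a fixed bidegree component — and then invoking that adjacent transpositions generate all reorderings. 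Once that single commutation is nailed down, the rest is a straightforward induction on weight.
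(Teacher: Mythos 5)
Your proposal is correct and follows essentially the same route as the paper's own proof: reduce, via Eq.~\eqref{eq:dalggen} and the coderivation recursion, to showing that the operators $\id\cks\dd_i+\dd_i\ot\id$ pairwise commute, which comes down to the commutativity \eqref{eq:pcomm} of the $\dd_i$ together with the observation that $\dd_j$ preserves depth so the shift in $\cks$ is unaffected, and then get uniqueness by induction on weight. Your step (a) in fact spells out the depth-shift bookkeeping that the paper dismisses as easily verified, so no gap remains.
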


\begin{proof}
Possible ambiguities in defining the value $\reccop([\vec s])$ arises from the choice of $1\leq i\leq k$ in applying the recursion~\meqref{eq:reccop3}.	More precisely, applying Eqs.~\meqref{eq:dalggen} and Eq.(\ref{eq:dalgrec2}), we obtain
\begin{align*}
 \reccop([s_1,\ldots,s_k])& =
 \reccop\Big(\prod_{i=1}^k\frac{\dd_{i}^{s_i-1}}{(s_i-1)!}([1,1,\cdots,1])\Big)\\
 &= \Big(\prod_{i=1}^k \frac{(\id \cks \dd_i+\dd_i\ot \id)^{s_i-1}}{(s_i-1)!}\Big)\reccop ([1,\ldots,1]).
\end{align*}
We just need to show that the end result does not depend on the order in which the operators $\id \cks \dd_i+\dd_i\ot \id, 1\leq i\leq k,$ are applied. This amounts to the commutativity
$$ (\id \cks \dd_i+\dd_i\ot \id)(\id \cks \dd_j+\dd_j\ot \id)=
(\id \cks \dd_j+\dd_j\ot \id)(\id \cks \dd_i+\dd_i\ot \id), \quad 1\leq i, j\leq k,$$
which distributes to
$$  (\id \cks \dd_i)(\id \cks \dd_j)=(\id \cks \dd_j)(\id \cks \dd_i), \quad (\id \cks \dd_i)(\dd_j\ot \id)=(\dd_j\ot \id)(\id \cks \dd_i), $$
$$ (\dd_i\ot \id)(\id \cks \dd_j)=(\id \cks \dd_j)(\dd_i\ot \id), \quad (\dd_i\ot \id)(\dd_j\ot \id)=(\dd_j\ot \id)(\dd_i\ot \id), \quad 1\leq i, j, \leq k.$$
These are all easily verified by Eq.(\mref{eq:pcomm}). The uniqueness follows from the recursion.
\end{proof}

\section{Locality Hopf algebra of Chen fractions}
\mlabel{s:locchen}
In this section, we obtain a Hopf algebra structure on the space of Chen fractions in the locality framework~\mcite{CGPZ,GPZ1}. It is of independent interest because of the close relationship of the Chen fractions and MZVs.

\subsection{Locality multi-differential algebra of Chen fractions}
\mlabel{subsec:localgerm}

We first recall needed notions on locality algebraic structures ~\mcite{CGPZ,GPZ3}, culminating in the new concept of locality multi-differential algebras. With each notion, we will use Chen fraction as the primary example which will also be the main application.

To begin with, a {\bf locality set} is a  couple $(X, \top)$ where $X$ is a set and
	$$ \top:= X\times_\top X \subseteq X\times X$$
	is a binary symmetric relation, called a {\bf locality relation} on $X$. For $x_1, x_2\in X$,
	denote $x_1\top x_2$ if $(x_1,x_2)\in \top$. For a subset $U\subset X$, define the {\bf  {polar} subset} of $U$ by
\begin{equation*}
U^\top:= \{x\in X\,|\, (x,U)\subseteq \top \}.  			
\end{equation*}

For a variable set
$\{x_i\}_{i\in \Z _{\ge 1}}$, and a finite nonempty subset $I=\{i_1, \cdots, i_k\} \subset \Z _{\ge 1}$, recall that a {\bf Chen fraction} \mcite{GPZ1} with variables in  $\{x_i\ | i\in I\}$ is
a fraction of the form
$$\wvec{s_1, \cdots, s_k}{x_{i_1},\cdots, x_{i_k}}:=\frac{1}{(x_{i_1}+x_{i_2}+\cdots+x_{i_k})^{s_1}(x_{i_2}+\cdots+x_{i_k})^{s_2}\cdots x_{i_k}^{s_k}},
$$
where $s_1, \cdots, s_k\in \Z _{\ge 1}$.
The fraction is also called a MZV fraction~\mcite{GX} because its free summation for the variable $x_i$ over $\Z_{\ge 1}$ gives the MZV:
\begin{equation}
\zeta(s_1,\ldots,s_k)=\sum_{n_1,\ldots,n_k=1}^\infty
\frac{1}{(n_1+n_2+\cdots+n_k)^{s_1}(n_2+\cdots+n_k)^{s_2}\cdots n_k^{s_k}}.
\end{equation}
See~\mcite{BBBL,H1,LM} for further studies of MZVs with this approach.

Let $\fch$ denote the  set of  all Chen fractions with variables in $\{x_i\}_{i\in \Z _{\ge 1}}$, together with the constant function $1$.
Following \mcite {GPZ1},
define a locality relation on $\fch$ by
$$f_1\top  f_2 \, \text{if } f_1, f_2\in \fch \text{ are in disjoint subsets of variables},
$$
and $1\top f$ for arbitrary Chen fraction $f$.

Further recall that a {\bf locality vector space} is a vector space $V$ equipped with a locality relation $\top$ such that, for each subset $X$ of $V$, $X^\top$ is a linear subspace of $V$.
Moreover, a {\bf locality algebra} over a field $K$ is a locality vector space $(A,\top)$ over  a field $K$ together with a bilinear map
$$ m_A: A\times_\top A \to A,
(u,v)\mapsto  u\cdot v=m_A(x,y)$$
satisfying the following axioms:
\begin{enumerate}
	\item[(a)] For $u, v, w\in A$ with $u\top v, u\top w, v\top w$, there is
	\begin{equation*}
		(u\cdot v) \top w,\quad  u\top (v\cdot w), \quad (u\cdot v) \cdot w = u\cdot (v\cdot w). 	
		\mlabel{eq:asso}
	\end{equation*}
	\item[(b)] For $u, v, w\in A$ with $u\top w, v\top w$,
	$$ (u+v)\cdot w = u\cdot w + v\cdot w, \quad   w\cdot (u+v) = w\cdot u+w\cdot v, $$
	$$ (ku)\cdot w =k(u\cdot w),\quad  u\cdot (kw)=k(u\cdot w), \ k\in K.$$
	\item[(c)] There is a unit $1_A$ such that $1_A\top u$ for each $u\in A$, and
	$$1_A\cdot u=u\cdot 1_A=u.$$
\end{enumerate}

Continuing with the above example of locality set $\fch$ of Chen fractions, let $\Q \fch$ denote the subspace of  functions in variables  $\{x_i\}_{i\in \Z _{\ge 1}}$ spanned  by  $\fch$ over $\Q$.
By \mcite{GPZ1}, The set $\fch$ is a linear basis of $\Q \fch$.

The linear space $\Q F ^{ch}$ is not an algebra under the function multiplication,  which  we will denote by  $``\cdot"$ if it needs to be shown explicitly.
For example \mcite {GPZ2}, the product of Chen fractions
$$\frac 1{(x_1+x_2)x_1}\cdot\frac 1{(x_1+x_3)x_1}$$
is not in $\Q \fch$.

This lack of multiplication closure is remedied in the locality setting.
Extend the locality relation $\top$ on $\fch$ bilinearly to yield a locality vector space $\Q \fch$:
$$\Big(\sum_i a_i f_i\Big) \top \Big(\sum_j b_j g_j\Big) \text{ if and only if } f_i \top g_j \text{ for all } i, j, \quad a_i, b_j\in \Q, f_i, g_j\in \fch.$$

\begin {prop}[ \mcite{GPZ1}]
\mlabel {prop:localchen}
The restriction of function multiplication  makes $(\Q \fch, \top_{\Q \fch}, 1)$ into a  locality commutative algebra.
\end{prop}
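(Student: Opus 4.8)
The plan is to verify the three defining axioms of a locality commutative algebra for the triple $(\Q\fch, \top_{\Q\fch}, 1)$, where the multiplication is the restriction of ordinary function multiplication to pairs $f_1\top f_2$ of Chen fractions (extended bilinearly). The only substantive point requiring proof is that this restricted multiplication actually lands inside $\Q\fch$; once closure is known, associativity, the distributive laws, and the unit axiom are inherited from the ambient commutative algebra of rational functions and need only be checked for compatibility with $\top$, which is immediate from the way $\top$ encodes disjointness of variable supports.

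First I would reduce the closure question to a single computation on basis elements: by bilinearity of both the product and the locality relation, it suffices to show that if $f=\wvec{s_1,\cdots,s_k}{x_{i_1},\cdots,x_{i_k}}$ and $g=\wvec{t_1,\cdots,t_\ell}{x_{j_1},\cdots,x_{j_\ell}}$ have disjoint variable sets $\{i_1,\ldots,i_k\}\cap\{j_1,\ldots,j_\ell\}=\emptyset$, then $f\cdot g\in\Q\fch$. (The cases where one factor is the constant $1$ are trivial.) Here I would invoke the shuffle-product formula for Chen fractions with disjoint variables, which is exactly the classical fact — recorded in the references \mcite{GPZ1,GPZ2,GX} cited in the excerpt — that the product of two such fractions expands as a $\Q$-linear combination (in fact a sum with coefficients $1$) of Chen fractions indexed by the shuffles of the two variable words, with the exponent data transported along. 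Concretely, $f\cdot g=\sum_{\sigma} \wvec{\cdots}{\cdots}$ where $\sigma$ runs over shuffles of $(x_{i_1},\ldots,x_{i_k})$ and $(x_{j_1},\ldots,x_{j_\ell})$; since each summand is again a Chen fraction in $\{x_i\}_{i\in\Z_{\ge1}}$, the sum lies in $\Q\fch$. The small example $\frac1{(x_1+x_2)x_1}\cdot\frac1{(x_1+x_3)x_1}\notin\Q\fch$ from the excerpt illustrates why disjointness of variables is essential: without it the partial-fraction expansion introduces denominators that are not of Chen type.

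Next I would check the locality compatibility of the axioms. For associativity: suppose $u\top v$, $u\top w$, $v\top w$ with $u,v,w\in\fch$; then their variable sets are pairwise disjoint, so $u\cdot v$ (a sum of Chen fractions in the union of the variable sets of $u$ and $v$) is disjoint from $w$, giving $(u\cdot v)\top w$, and symmetrically $u\top(v\cdot w)$; the equality $(u\cdot v)\cdot w=u\cdot(v\cdot w)$ then holds because both sides equal the ordinary product in the field of rational functions, where associativity is automatic, and both sides have been shown to lie in $\Q\fch$. Commutativity $u\cdot v=v\cdot u$ whenever $u\top v$ is likewise inherited from the commutativity of function multiplication. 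The distributive laws and scalar compatibility in axiom (b) follow from bilinearity of function multiplication together with the bilinear extension of $\top$ to $\Q\fch$ — indeed this is precisely how $\top_{\Q\fch}$ was defined. For the unit axiom (c), the constant function $1$ satisfies $1\top f$ for every Chen fraction $f$ by the definition of $\top$ on $\fch$, and $1\cdot f=f\cdot 1=f$ trivially. Finally, one should confirm that $(\Q\fch,\top)$ is a locality vector space, i.e. that polar sets $X^\top$ are linear subspaces; this is the content of the bilinear extension and is recorded in the excerpt's construction, so it may simply be cited.

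The main obstacle is the closure statement, and all of its weight sits in the shuffle-product identity for Chen fractions with disjoint variables. Since that identity is already available in the literature cited in the excerpt (and is the very reason Chen fractions model the shuffle algebra of MZVs), the proof reduces to quoting it and observing that its output is a $\Q$-combination of Chen fractions; everything else is a routine transfer of the ambient commutative-algebra axioms through the disjointness bookkeeping encoded by $\top$. I would therefore present the argument as: (1) state that it suffices to prove closure on basis pairs; (2) invoke the disjoint-variable shuffle formula to conclude $f\cdot g\in\Q\fch$; (3) dispatch associativity, commutativity, distributivity, and the unit by inheritance from rational functions plus the disjointness compatibility of $\top$.
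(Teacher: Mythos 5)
The paper gives no proof of this proposition at all: it is quoted verbatim from \mcite{GPZ1}, so your reconstruction is compared against the argument in that reference. Your overall structure is the right one and is essentially what the cited source does: reduce to basis elements, establish closure of the product of two Chen fractions with disjoint variable sets, and then transfer associativity, commutativity, distributivity and the unit axiom from the ambient algebra of rational functions, with the locality compatibilities $(u\cdot v)\top w$, $u\top(v\cdot w)$ read off from disjointness of variable supports. Those latter verifications, and the observation that polar sets are subspaces by the bilinear extension of $\top$, are handled correctly.

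The one step that is wrong as literally written is your closure formula. You assert that for disjoint variables $f\cdot g=\sum_\sigma\wvec{\cdots}{\cdots}$ with $\sigma$ running over the shuffles of the variable sequences $(x_{i_1},\ldots,x_{i_k})$ and $(x_{j_1},\ldots,x_{j_\ell})$, ``with the exponent data transported along'' and all coefficients equal to $1$. The correct identity shuffles the encoded words $x_0^{s_1-1}x_{i_1}\cdots x_0^{s_k-1}x_{i_k}$ and $x_0^{t_1-1}x_{j_1}\cdots x_0^{t_\ell-1}x_{j_\ell}$, so the $x_0$-letters are interleaved as well and the exponents get redistributed (this is precisely the Euler-decomposition phenomenon mentioned in Section~\mref{ss:mainthm}). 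For instance
\begin{equation*}
\wvec{2}{x}\cdot\wvec{1}{y}\;=\;\frac{1}{x^2y}\;=\;\wvec{2,1}{x,y}+\wvec{2,1}{y,x}+\wvec{1,2}{y,x},
\end{equation*}
whereas shuffling only the variable sequences and carrying the exponents along would give the two terms $\wvec{2,1}{x,y}+\wvec{1,2}{y,x}$, which do not sum to $\frac{1}{x^2y}$. The conclusion you actually need --- that the product of two Chen fractions in disjoint variables is a nonnegative-integer combination of Chen fractions, hence lies in $\Q\fch$ --- is true and is exactly the disjoint-variable shuffle identity of \mcite{GPZ2,GX}, equivalently the statement that $\phi$ in Proposition~\mref{prop:phiisomorphism} is a locality algebra isomorphism; it can also be proved directly by induction on the total degree using the partial-fraction identity $\frac{1}{ab}=\frac{1}{(a+b)a}+\frac{1}{(a+b)b}$ together with the derivations $d_{i,j}$ of Eq.~\meqref{eq:chenfractionpartial}. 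So your proof goes through once the quoted identity is stated in its correct form; as written, the key formula is false even though the closure it is meant to justify is not.
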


\begin{exam}
For $\wvec{1}{x}, \wvec{1}{y}\in \fch$,
$$
\wvec{1}{x}\cdot \wvec{1}{y}=\frac{1}{x}\cdot\frac{1}{y}=\frac{1}{(x+y)x}+\frac{1}{(x+y)y}=\wvec{1,1}{y, x}+\wvec{1,1}{x, y},
$$
which is in $\Q\fch$.
\end{exam}

A linear map $d:(A,\top)\to (A,\top)$ on a locality algebra is called a {\bf locality derivation} if for $(x,y)\in A\times_\top A$, we have
\begin{equation}
\begin{split}
	(d(x),y), (x,d(y))\in A\times_\top A, \\
	 d(xy)=d(x)y+xd(y).
	\end{split}
 \mlabel{eq:diffa}
\end{equation}
Then $(A,\top,d)$ is called a {\bf locality differential algebra}. If $A$ carried a family of commuting locality derivations, then $A$ is called a {\bf locality multi-differential algebra}.

As functions, the space $\Q \fch$ carries a family of partial derivatives $\frac {\partial }{\partial x_i}$, $i\in \Z _{\ge 1}$. Let
\begin{equation}
	\partial _i:=-\frac {\partial }{\partial x_i}
	\mlabel{eq:partial}
\end{equation}
Then $\Q\fch$ is a locality multi-differential commutative algebra.

The natural grading on $\fch$ given by the degree of a Chen fraction equips the locality algebra $A:=\Q \fch$ with a {\bf locality graded algebra} in the sense that, there is a grading $A=\oplus_{n\geq 0}A_n$ such that $m_A((A_m\times A_n)\cap \top_A) \subseteq A_{m+n}$ for all $m, n\in \Z _{\ge 0}$.
Furthermore, the derivation $\partial_i$ increases the grading by one.

We summarize the locality structures on the space of Chen fractions as follows.
\begin{prop}
\mlabel{p:chenlocda}
The quintuple $(\Q\fch,\top, \cdot, 1, \{\partial_i\}_i)$ is a locality multi-differential graded algebra.
\end{prop}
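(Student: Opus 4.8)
The plan is to assemble the statement from three ingredients, two of which have already been recorded. The locality commutative algebra structure $(\Q\fch,\top,\cdot,1)$ is exactly Proposition~\mref{prop:localchen}. The compatibility of the degree grading with the locality product, together with the fact that each $\partial_i$ raises the degree by one, was observed in the paragraphs preceding the statement; both follow from the remark that a Chen fraction $\wvec{s_1,\cdots,s_k}{x_{i_1},\cdots,x_{i_k}}$ is a homogeneous rational function of degree $-(s_1+\cdots+s_k)$, so that the function product of a degree-$m$ and a degree-$n$ Chen fraction is homogeneous of degree $-(m+n)$ and hence a combination of weight-$(m+n)$ Chen fractions, while $\partial_i$ lowers homogeneity degree by one. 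Thus the real content to verify is that the family $\{\partial_i\}_{i\in\Z_{\ge1}}$ consists of commuting locality derivations of $(\Q\fch,\top,\cdot,1)$.

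The first and only genuinely computational step is to check that each $\partial_i$ preserves $\Q\fch$. Differentiating the product form $\wvec{s_1,\cdots,s_k}{x_{i_1},\cdots,x_{i_k}}=\prod_{j=1}^k(x_{i_j}+\cdots+x_{i_k})^{-s_j}$ with respect to $x_{i_\ell}$, and noting that $x_{i_\ell}$ occurs precisely in the factors indexed by $j\le\ell$, one gets
$$\partial_{i_\ell}\wvec{s_1,\cdots,s_k}{x_{i_1},\cdots,x_{i_k}}=\sum_{j=1}^\ell s_j\,\wvec{s_1,\cdots,s_j+1,\cdots,s_k}{x_{i_1},\cdots,x_{i_k}},$$
while $\partial_i$ annihilates any Chen fraction not involving $x_i$. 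In particular $\partial_i$ sends a Chen fraction with variable set $I$ to a $\Q$-linear combination of Chen fractions with the same variable set $I$, so $\partial_i(\Q\fch)\subseteq\Q\fch$. (This is the function-level counterpart of the operator $\delta_i$ of Eq.~\meqref{eq:deltadef}, which is why the recursion for $\reccop$ is modeled on these derivations.)

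Granting this closure, the remaining axioms are routine. Given $f\top g$ in $\fch$, the fractions $f$ and $g$ have disjoint variable sets, and by the previous step $\partial_i(f)$ (resp.\ $\partial_i(g)$) has the same variable set as $f$ (resp.\ $g$) or vanishes, so $\partial_i(f)\top g$ and $f\top\partial_i(g)$; this extends bilinearly to $\Q\fch$. The Leibniz identity $\partial_i(f\cdot g)=\partial_i(f)\cdot g+f\cdot\partial_i(g)$ holds as an identity of rational functions by the ordinary product rule, and every term appearing in it lies in $\Q\fch$ by Proposition~\mref{prop:localchen} and the closure just established, so it is a valid identity in the locality algebra. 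Finally $\partial_i\partial_j=\partial_j\partial_i$ on $\Q\fch$: both composites are defined there by the closure step and agree with the corresponding composite of partial derivatives of rational functions, which commute by Clairaut's theorem. Combining these facts with Proposition~\mref{prop:localchen} and the grading observations gives that $(\Q\fch,\top,\cdot,1,\{\partial_i\}_i)$ is a locality multi-differential graded algebra. The only step with any substance is the closure of $\Q\fch$ under $\partial_i$, and the explicit differentiation formula above settles it; everything else is either already proved or classical calculus.
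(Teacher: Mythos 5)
Your proposal is correct and follows essentially the same route as the paper, which states Proposition~\mref{p:chenlocda} as a summary of Proposition~\mref{prop:localchen} together with the evident facts that the partial derivatives $\partial_i$ are commuting derivations preserving $\Q\fch$ and raising the weight grading by one. Your explicit closure formula $\partial_{i_\ell}\wvec{s_1,\cdots,s_k}{x_{i_1},\cdots,x_{i_k}}=\sum_{j=1}^{\ell} s_j\,\wvec{s_1,\cdots,s_j+1,\cdots,s_k}{x_{i_1},\cdots,x_{i_k}}$ is exactly the computation the paper later records in Lemma~\mref{lem:partialanddelta}, so the added detail is consistent with, and a welcome filling-in of, the paper's argument.
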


For later use, we also recall the following notion.
Given two locality algebras $(A_i, \top_i), i=1,2$,
a {\bf locality algebra homomorphism} is a linear map   $\varphi:A_1\longrightarrow A_2$  such that
\begin{equation}
a  \top_1 b\Longrightarrow \varphi(a)\top_2\varphi(b),\quad
\varphi(a \cdot b)=\varphi(a )\cdot \varphi(b), \quad \varphi(1_{A_1})=1_{A_2}.
\mlabel{eq:lochom}
\end{equation}

As noted in the introduction, the locality multi-differential algebra obtained in Proposition~\mref{p:chenlocda} provides a natural locality setting for the recent study of multi-differential algebra and multi-Novikov algebra arising from the regularity structures in stochastic PDEs~\mcite{BD,BHZ}.

\subsection{Locality multi-differential Hopf algebras}
\mlabel{ss:lochopf}
Here we give the notion of locality multi-differential Hopf algebras, in order to be applied to the locality multi-differential algebra of Chen fractions.

The coalgebra structure can be generalized to the locality setting~\mcite{CGPZ}.
For a locality vector space $(C, \top)$ over $\Q$, let
$C\otimes_\top  C$
denote the image of the composition map
$$  \Q   (C\times_\top C) \rightarrow \Q(C\times C)\rightarrow C\ot C,$$ where the first map is the inclusion and the second map is the quotient map modulo bilinearity.
\begin{enumerate}
  \item Let $\Delta:C\to C\otimes C$ be a linear map. The triple $(C,\top, \Delta)$ is called a {\bf  (counitary) locality  coalgebra} if  it satisfies the following conditions.
\begin{enumerate}
\item For $U\subset C$,
\begin{equation}
\Delta (U^\top) \subseteq U^\top \otimes _\top U^\top,
\mlabel{eq:comag}
\end{equation}
which is equivalent  to the condition that, for $c\in C$,
$$\Delta(c^{\top})\subseteq c^{\top}\otimes_{\top}  c^{\top}.
$$
\item  The coassociativity holds:
$$(\rmid _C\ot \Delta)\,\Delta= (\Delta\ot \rmid _C)\, \Delta.$$
\mlabel{localDelta}
\item
There is a {\bf counit}, namely a linear map $\vep: C\to   K  $ such that
  $ ({\rmid}_C\otimes   \vep)\, \Delta=  (\vep\otimes {\rmid}_C)\, \Delta= {\rmid}_C$.
\end{enumerate}
\item A {\bf graded locality coalgebra} is a locality coalgebra
 $(C,\top, \Delta)$ with a grading $C=\oplus_{n\geq 0} C_n$ such that, for $U\subseteq C$,
\begin{equation}
\Delta(C_n\cap U^\top)\subseteq\bigoplus_{p+q=n} (C_p\cap U^\top)\otimes_\top (C_q\cap U^\top).
\mlabel{eq:lconn}
\end{equation}
Moreover,  the  graded locality coalgebra   is called  {\bf connected}  if
$$
\bigoplus_{n\geq 1} C_n = \ker \vep,
$$
and so $C_0$ is one-dimensional.
\item
A {\bf locality bialgebra} is a sextuple $( B , \top, m, u, \Delta, \vep)$ consisting of a locality  algebra $( B , \top, m, u)$ and a locality coalgebra $\left( B , \top, \Delta, \vep\right)$
such that $\Delta$ and $\vep$ are locality algebra homomorphisms as stipulated in Eq.~\meqref{eq:lochom}.
\item
A  locality bialgebra $B$ is called  {\bf graded} if there is a $\Z_{\geq 0}$-grading $B=\oplus_{n\geq 0} B_n$ with respect to which $B$ is both a  graded locality  algebra and a graded  locality  coalgebra. In this case,  $B$  is called  {\bf connected}  if it is connected as a coalgebra.
\item
A {\bf locality Hopf algebra} is  a  septuple  $( B,\top, m, \Delta, u, \vep, S)$  such  that  $( B,\top, m, \Delta, u, \vep)$   is  a  locality   bialgebra,  $S:  B \to  B $ is  called  an  {\bf antipode}   which is   a linear map such that $a\top b$ implies $a\top S(b)$ for $a, b\in B$,
 and
 \[S\star \rmid _B=\rmid _B\star S = u  \, \vep.\]
\end{enumerate}

Generalizing the notion of differential (associative) Hopf algebra~
\mcite{Ar,Br}, we give the locality variation.
\begin{defn}
\mlabel{de:ldha}
A {\bf locality multi-differential Hopf algebra} is a locality Hopf algebra
$$( B,\top, m, \Delta, u, \vep, S)$$
together with a family $d_i:B\to B, i\geq 1,$ of commuting linear maps which are both {\bf locality derivations} as defined in Eq.~\meqref{eq:diffa} and {\bf locality coderivations} in the sense that, for $a\in B$,
\begin{equation}
	\mlabel{eq:codiffa}
	\Delta d_i (a^\top)= (\id\ot d_i+d_i\ot \id)\Delta (a^\top)\subseteq a^\top \ot_\top a^{\top}.
\end{equation}
\end{defn}

\subsection {Locality multi-differential Hopf algebra of  Chen  fractions}
We now apply the general notion of locality multi-differential Hopf algebras to the locality multi-differential algebra $(\Q\fch,\top, \partial_i)$ in Proposition~\mref{p:chenlocda}.

With the derivations $\partial _i:=-\frac {\partial }{\partial x_i}$, we define
\begin{equation}
 d_{i,j}:=\p_i-\p_j,
\mlabel{eq:partial2}
\end{equation}
which are again derivation. Then we have \mcite {GPZ2}
\begin{align}
 \wvec{s_1, s_2,\cdots,s_k }{x_{i_1},x_{i_2},\cdots,x_{i_k}}
=\frac{d_{i_1,i_0}^{s_1-1}  d_{i_2,i_1}^{s_2-1}\cdots   d_{i_k, i_{k-1}}^{s_k-1}}{(s_1-1)!(s_2-1)!\cdots (s_k-1)!}\wvec{1,1,\cdots,1}{x_{i_1}, x_{i_2},\cdots,x_{i_k}}. \mlabel{eq:chenfractionpartial}
\end{align}

\begin{defn}
Define a linear map
$$\fraccop : \Q \fch\longrightarrow \Q \fch\otimes \Q \fch$$
by the following recursion.
 \begin{enumerate}
\item  For the constant function $1$, define
$$\fraccop(1):=1\otimes 1.
$$
\item  For $s_1=\cdots=s_k=1$, define
\begin{equation}\mlabel{eq:deconcan}
\fraccop\big(\wvec{1,\cdots,1}{x_{i_1}, \cdots,x_{i_k}}\big):=\sum_{j=0}^{k}\wvec{1,\cdots ,1}{x_{i_1},\cdots, x_{i_j}}\otimes\wvec{1,\cdots ,1}{x_{i_{j+1}},\cdots, x_{i_k}}.
\end{equation}
\item Recursively, define
\begin{equation}
       \fraccop(\wvec{s_1,\cdots, s_{j}+1,\cdots, s_k}{x_{i_1},\cdots, x_{i_j},\cdots, x_{i_k}}):=\frac{\id\otimes d_{i_j, i_{j-1}}+d_{i_j, i_{j-1}}\otimes \id}{s_j}\fraccop(\wvec{s_1,\cdots ,s_{j},\cdots ,s_k}{x_{i_1},\cdots, x_{i_j},\cdots ,x_{i_k}}).
       \mlabel{eq:chenrec}
     \end{equation}
\end{enumerate}
\mlabel{d:chencop}
\end{defn}

Since the partial derivatives $\partial_i$ and hence all $d_{i,j}$ commute among themselves, iterations of the recursion does not depend on the order of applications. Therefore, the linear map $\fraccop $ is well-defined. Complete details of the proof mirrors that of Proposition~\mref{p:uniquecoproduct}.

Note that Eq.~\meqref{eq:chenrec} means
$$ \fraccop d_{i_j,i_{j-1}} = (\id\otimes d_{i_j, i_{j-1}}+d_{i_j, i_{j-1}}\otimes \id) \fraccop.$$
This is the defining identity that the derivations $d_{i,j}$ are also locality coderivations as in Eq.~\meqref{eq:codiffa}.

Define a linear  map $\varepsilon: \Q\fch\longrightarrow\Q$  on the basis $\fch$ by
$$\varepsilon(f)=\left\{\begin{array}{lc}
1, &{\rm if}\, f=1,\\
0, &{\rm otherwise}.
\end{array}
\right.
$$
\begin{prop}
	\mlabel{prop:Deltacoasso}
\mlabel{lem:localitycounitalcoalg}
The quadruple $(\Q \fch, \top, \fraccop, \varepsilon, \{\partial_i\}_i)$  is a locality multi-differential coalgebra.
\end{prop}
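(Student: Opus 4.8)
The plan is to verify, one by one, the defining axioms of a locality multi-differential coalgebra for the quadruple $(\Q\fch,\top,\fraccop,\vep,\{\partial_i\}_i)$, exploiting the recursive definition of $\fraccop$ and the identity~\meqref{eq:chenfractionpartial} that expresses every Chen fraction as an ordered product of the commuting derivations $d_{i_j,i_{j-1}}$ applied to a fraction with all exponents equal to $1$. First I would record the principle that governs all the verifications: for any family of linear operators $T_i=\id\ot d_{i_j,i_{j-1}}+d_{i_j,i_{j-1}}\ot\id$ on $\Q\fch\ot\Q\fch$, or their one-sided analogues, the recursion~\meqref{eq:chenrec} says $\fraccop$ intertwines $d_{i_j,i_{j-1}}$ with $T_i$; since these operators commute (because the $d_{i,j}$ do, by Proposition~\mref{p:chenlocda}), it suffices to check each coalgebra identity on the ``seed'' fractions $\wvec{1,\cdots,1}{x_{i_1},\cdots,x_{i_k}}$ and on $1$, and then propagate it along the recursion. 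This is exactly the mechanism already used, for well-definedness, in the remark after Definition~\mref{d:chencop} and in Proposition~\mref{p:uniquecoproduct}.

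Concretely I would proceed in the following steps. (1) \emph{Counit.} Check $(\id\ot\vep)\fraccop=(\vep\ot\id)\fraccop=\id$ on the seed fractions, where it follows immediately from the deconcatenation formula~\meqref{eq:deconcan} and $\vep(1)=1$; then observe that both $\id\ot\vep$ and $\vep\ot\id$, composed against the recursion operator $\id\ot d_{i_j,i_{j-1}}+d_{i_j,i_{j-1}}\ot\id$, reproduce $d_{i_j,i_{j-1}}$ on the target side (using $\vep\circ d_{i,j}=0$, since $d_{i,j}$ strictly raises degree and $\vep$ kills everything of positive degree), so the counit identity is preserved under the recursion. (2) \emph{Coassociativity.} Verify $(\id\ot\fraccop)\fraccop=(\fraccop\ot\id)\fraccop$ on the seeds — a direct check from~\meqref{eq:deconcan}, this being the ordinary deconcatenation coproduct — and then push it through the recursion: applying $d_{i_j,i_{j-1}}$ to the argument multiplies the left side by $\id\ot\id\ot d_{i_j,i_{j-1}}+\id\ot d_{i_j,i_{j-1}}\ot\id+d_{i_j,i_{j-1}}\ot\id\ot\id$ on both sides by a short computation with $(\id\ot\fraccop)$ and $(\fraccop\ot\id)$ against the coderivation rule, so the two sides stay equal. (3) \emph{Locality of $\fraccop$.} Show $\fraccop(f^\top)\subseteq f^\top\ot_\top f^\top$: for a seed fraction $f$ in variables $\{x_{i_1},\dots,x_{i_k}\}$, each term $\wvec{1,\cdots,1}{x_{i_1},\cdots,x_{i_j}}\ot\wvec{1,\cdots,1}{x_{i_{j+1}},\cdots,x_{i_k}}$ uses only variables of $f$, which are disjoint from the variables of anything in $f^\top$, giving membership in $f^\top\ot_\top f^\top$; and since $d_{i_j,i_{j-1}}=\p_{i_j}-\p_{i_{j-1}}$ only involves variables of $f$, the recursion~\meqref{eq:chenrec} never introduces new variables, so the property persists. (4) \emph{Grading.} Note $\fraccop$ is homogeneous: on seeds it is degree-preserving in the total degree, and $\id\ot d_{i_j,i_{j-1}}+d_{i_j,i_{j-1}}\ot\id$ raises total degree by one, matching the degree increase of $d_{i_j,i_{j-1}}$; combined with step (3) this yields the graded locality coalgebra condition~\meqref{eq:lconn}. (5) \emph{Multi-differential/coderivation compatibility.} This is essentially tautological: the content of~\meqref{eq:chenrec} is precisely $\fraccop d_{i_j,i_{j-1}}=(\id\ot d_{i_j,i_{j-1}}+d_{i_j,i_{j-1}}\ot\id)\fraccop$, and since $d_{i,j}=\p_i-\p_j$ with the $\p_i$ commuting and each $\p_i$ raising degree by one, together with step (3) one gets~\meqref{eq:codiffa} for every $d_{i,j}$, hence for the $\partial_i$ as well.

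I expect the main obstacle to be the bookkeeping in the coassociativity step: one must be careful that the recursion for $\fraccop$ at depth-$k$ fractions with possibly many large exponents is reached from the seed $\wvec{1,\cdots,1}{x_{i_1},\cdots,x_{i_k}}$ by applying the operators $d_{i_j,i_{j-1}}$, $1\le j\le k$, in \emph{some} order, and that the induced operators on $\Q\fch\ot\Q\fch$ and on $\Q\fch\ot\Q\fch\ot\Q\fch$ genuinely commute — this rests on~\meqref{eq:pcomm}-style commutation of the $d_{i,j}$ on functions (Proposition~\mref{p:chenlocda}) and a distributivity argument identical to the one in the proof of Proposition~\mref{p:uniquecoproduct}. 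Once that commutation is in hand, every identity reduces to its value on the seeds, where it is the classical statement about the deconcatenation coproduct. A secondary, purely locality-theoretic point to handle with care is that $C\ot_\top C$ is only the span of simple tensors of local pairs, so in steps (2) and (5) one should check that the intermediate expressions, e.g. $(\id\ot\fraccop)\fraccop(f^\top)$, actually land in $f^\top\ot_\top f^\top\ot_\top f^\top$; this again follows because no step of the recursion enlarges the variable set.
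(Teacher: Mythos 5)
Your proposal is correct and follows essentially the same route as the paper: verify everything on the all-ones ``seed'' fractions, where $\fraccop$ is deconcatenation, and propagate coassociativity, the counit identity, the locality condition and the coderivation property through the recursion using the commutation of the $d_{i,j}$ (exactly the paper's Eq.~\meqref{eq:DeltaDer} and \meqref{eq:Deltacommpartial} plus induction on the weight). The only differences are cosmetic: the paper checks the counit directly rather than by recursion, and your grading step is not needed for this proposition (it is only used later for the connected graded bialgebra statement).
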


\begin{proof}
Since
$$ \p_i=\sum_{j=1}^i d_{j,j-1},,  \quad i\in \Z _{\ge 1}, $$
the recursion in Eq. \meqref{eq:chenrec} is equivalent to the coderivation condition
\begin {equation}
\mlabel {eq:DeltaDer}
\fraccop   \p _i=(\id\otimes \p _i+\p _i\otimes \id)  \fraccop, \quad i\in \Z_{\geq 1}.
\end {equation}

It then follows that
\begin{equation}
\begin{split}
(\id\otimes \fraccop)  (\id\otimes \p _i+\p _i\otimes \id)
=(\id\otimes \id\otimes \p _i+\id\otimes \p _i\otimes \id+\p _i\otimes \id\otimes \id )  (\id\otimes \fraccop), \\
(\fraccop \otimes \id)  (\id\otimes \p _i+\p _i\otimes \id)
=(\id\otimes \id\otimes \p _i+\id\otimes \p _i\otimes \id+\p _i\otimes \id\otimes \id )  (\fraccop \otimes \id).
\end{split}
\mlabel{eq:Deltacommpartial}
\end{equation}

Now we prove the coassociativity
$$(\id\otimes\fraccop)\fraccop(\wvec{s_1,\cdots,s_k}{x_{i_1},\cdots,x_{i_k}})=(\fraccop\otimes \id)\fraccop(\wvec{s_1,\cdots,s_k}{x_{i_1},\cdots,x_{i_k}})
$$
of $\fraccop$ by induction on $|\vec s|:=s_1+\cdots +s_k\geq k$. When $|\vec s|=k$, we have $s_1=\cdots=s_k=1$, and the coproduct is just the deconcatenation and hence is coassociative:
\begin{align*}
(\id\otimes\fraccop)\fraccop\big(\wvec{1,1,\cdots,1}{x_{i_1}, x_{i_2},\cdots,x_{i_k}}\big)=&\sum_{0\le j\le \ell\le k}\wvec{1,\cdots,1}{x_{i_1},\cdots,x_{i_j}}\otimes \wvec{1,\cdots,1}{x_{i_{j+1}}, \cdots,x_{i_\ell}}\otimes \wvec{1,\cdots,1}{x_{i_{\ell+1}},\cdots,x_{i_{k}}}\\
=&(\fraccop\otimes \id)\fraccop\big(\wvec{1,1,\cdots,1}{x_{i_1}, x_{i_2},\cdots,x_{i_k}}\big)
\end{align*}
Assume that $\fraccop$ is coassociative for Chen fractions with $|\vec s|=\ell\ge k$. Then for a Chen fraction $\wvec{s_1,\cdots,s_k}{x_{i_1},\cdots,x_{i_k}}$ with $|\vec s|=\ell+1$, there is $s_j\geq 2$. So we have
\begin{align*}
 &(\id\otimes \fraccop)\fraccop(\wvec{s_1\cdots s_j\cdots s_k}{x_{i_1}\cdots x_{i_j}\cdots x_{i_k}})\\
=&(\id\otimes \fraccop)\Big(\frac{\id\otimes d_{{i_j},{i_{j-1}}}+d_{{i_j},{i_{j-1}}}\otimes \id}{s_j-1}\fraccop(\wvec{s_1\cdots s_{j}-1\cdots s_k}{x_{i_1}\cdots x_{i_j}\cdots x_{i_k}})\Big)\\
\overset {\textcircled{1}}{=}&\frac{\id\otimes \id\otimes d_{{i_j},{i_{j-1}}}+\id\otimes d_{{i_j},{i_{j-1}}}\otimes \id+d_{{i_j},{i_{j-1}}}\otimes \id\otimes \id}{s_j-1}(\id\otimes \fraccop)\fraccop(\wvec{s_1\cdots s_{j}-1\cdots s_k}{x_{i_1}\cdots x_{i_j}\cdots x_{i_k}})\\
\overset {\textcircled{2}}{=}&\frac{\id\otimes \id\otimes d_{{i_j},{i_{j-1}}}+\id\otimes d_{{i_j},{i_{j-1}}}\otimes \id+d_{{i_j},{i_{j-1}}}\otimes \id\otimes \id}{s_j-1}(\fraccop \otimes \id)\fraccop(\wvec{s_1\cdots s_{j}-1\cdots s_k}{x_{i_1}\cdots x_{i_j}\cdots x_{i_k}})\\
\overset {\textcircled{3}}{=}&(\fraccop\otimes \id)\fraccop(\wvec{s_1\cdots s_{j}\cdots s_k}{x_{i_1}\cdots x_{i_j}\cdots x_{i_k}}),
\end{align*}
where $\textcircled{1}$ follows from Eq.~(\mref{eq:Deltacommpartial}), $\textcircled{3}$ follows from Eqs.~(\mref{eq:Deltacommpartial}) and (\mref{eq:chenrec}), and $\textcircled{2}$ follows from the induction hypothesis. This completes the inductive proof that $\fraccop$ is coassociative.

We further show that $\fraccop$ is a locality coproduct, namely, $\fraccop(\{f\}^{\top})\subseteq \{f\}^{\top}\otimes_{\top} \{f\}^{\top}$ for arbitrary $f\in \fch.$
To begin with, it is known that $1\in \{f\}^{\top}$,   and
	$$\fraccop(1)=1\otimes 1\in \{f\}^{\top}\otimes _{\top} \{f\}^{\top}.$$
Next let $g=\wvec{s_1,s_2,\cdots,s_k}{x_{i_1}, x_{i_2},\cdots,x_{i_k}}$ be in $\{f\}^{\top}$. This means that $g$ and $f$ have disjoint variables. Since $\fraccop(g)$ and $g$ depend on the same variables, we have $\fraccop(g)\in \{f\}^{\top}\otimes_\top \{f\}^{\top}$. Thus
	$$\fraccop(\wvec{s_1,s_2,\cdots,s_k}{x_{i_1}, x_{i_2},\cdots,x_{i_k}})\in \{f\}^{\top}\otimes_\top \{f\}^{\top},
	$$
as needed.
	
Finally, we show that $\varepsilon$ is a counit. This follows from
$$(\varepsilon\otimes {\id})\fraccop(h)=\varepsilon(1) h=h,
\quad ({\id}\otimes \varepsilon)\fraccop(h)=\varepsilon(1) h=h, \quad
h\in \Q\fch.
$$

In summary, we have proved that  $(\Q\fch, \top, \fraccop, \varepsilon)$  is  a locality  coalgebra, and together with Eq.~\meqref{eq:DeltaDer}, a locality multi-differential coalgebra.
\end{proof}
We now show the compatibility of the coproduct with the product.
\begin{prop}
\mlabel{lem:Deltahomomorphism}
For $f,g\in\Q \fch$, if $f\top g$, then
\begin{equation}
\fraccop(f\cdot g)=\fraccop(f)\cdot\fraccop(g).
\mlabel{eq:delhom}
\end{equation}
\end{prop}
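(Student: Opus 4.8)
The plan is to reduce the identity to basis elements and then prove it by induction on total degree, exploiting that $\fraccop$ is pinned down by the deconcatenation coproduct together with the coderivation recursion of Definition~\ref{d:chencop}. Since $\fraccop$, the function multiplication, and the relation $\top$ are all (bi)linear, it suffices to prove $\fraccop(f\cdot g)=\fraccop(f)\cdot\fraccop(g)$ for $f,g$ in the basis $\fch$ with $f\top g$. If $f$ or $g$ is the constant $1$ this is immediate from $\fraccop(1)=1\otimes 1$; otherwise write $f=\wvec{s_1,\ldots,s_k}{x_{i_1},\ldots,x_{i_k}}$ and $g=\wvec{t_1,\ldots,t_\ell}{x_{j_1},\ldots,x_{j_\ell}}$ with disjoint variable sets and induct on $N:=|\vec s|+|\vec t|$. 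The base case is $N=k+\ell$, i.e.\ all exponents equal $1$: there $\fraccop$ restricts to the deconcatenation coproduct on each of $f$ and $g$ by Definition~\ref{d:chencop}(2), while the product $f\cdot g$ of two all-ones Chen fractions in disjoint variables equals the shuffle $\sum_\sigma\wvec{1,\ldots,1}{x_{\sigma(1)},\ldots,x_{\sigma(k+\ell)}}$, the sum running over the interleavings $\sigma$ of $(x_{i_1},\ldots,x_{i_k})$ and $(x_{j_1},\ldots,x_{j_\ell})$. This is the shuffle identity for Chen fractions~\cite{GPZ2,GX} (alternatively provable from $\frac{1}{AB}=\frac{1}{A+B}(\frac{1}{A}+\frac{1}{B})$ by a secondary induction on $k+\ell$), so the base case reduces to the classical fact that the deconcatenation coproduct is an algebra homomorphism for the shuffle product.

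For the inductive step, suppose $N>k+\ell$; by symmetry assume some $s_j\geq 2$. Put $\tilde f:=\wvec{s_1,\ldots,s_j-1,\ldots,s_k}{x_{i_1},\ldots,x_{i_k}}$ and $d:=d_{i_j,i_{j-1}}=\partial_{i_j}-\partial_{i_{j-1}}$ (with the convention on $i_0$ as in Eq.~\eqref{eq:chenfractionpartial}). By Eq.~\eqref{eq:chenfractionpartial}, $d(\tilde f)=(s_j-1)f$, while $d(g)=0$ because $f\top g$ forces $x_{i_j}$ and $x_{i_{j-1}}$ to be absent from $g$; since $d$ is a locality derivation (Proposition~\ref{p:chenlocda}) and $\tilde f\top g$, it follows that $d(\tilde f\cdot g)=(s_j-1)(f\cdot g)$. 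Applying $\fraccop$ and the coderivation identity $\fraccop\,d=(\id\otimes d+d\otimes\id)\,\fraccop$, which holds for every $d_{a,b}=\partial_a-\partial_b$ since it holds for each $\partial_i$ by Eq.~\eqref{eq:DeltaDer}, yields $(s_j-1)\,\fraccop(f\cdot g)=(\id\otimes d+d\otimes\id)\,\fraccop(\tilde f\cdot g)$. I would then apply the induction hypothesis $\fraccop(\tilde f\cdot g)=\fraccop(\tilde f)\cdot\fraccop(g)$ and use two observations: the operator $D:=\id\otimes d+d\otimes\id$ is a derivation for the componentwise product on $\Q\fch\otimes_\top\Q\fch$, and $D$ annihilates $\fraccop(g)$ because every tensor factor appearing in $\fraccop(g)$ is a Chen fraction in the variables of $g$ alone. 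Therefore $D(\fraccop(\tilde f)\cdot\fraccop(g))=D(\fraccop(\tilde f))\cdot\fraccop(g)=\fraccop(d(\tilde f))\cdot\fraccop(g)=(s_j-1)\,\fraccop(f)\cdot\fraccop(g)$, and cancelling $s_j-1\neq 0$ closes the induction.

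The main obstacle is concentrated in the base case: one must recognize $f\cdot g$ as a shuffle of Chen fractions and then appeal to the shuffle bialgebra. By contrast the inductive step is soft: it uses only that $d_{i_j,i_{j-1}}$ lowers the degree of $f$ by one, that it kills $g$ (hence $\fraccop(g)$) by disjointness of variables, and that $\id\otimes d+d\otimes\id$ is a tensor-product derivation, so that the Leibniz expansion of $D(\fraccop(\tilde f)\cdot\fraccop(g))$ collapses to a single term; in particular no explicit formula for the product on $\Q\fch$ is needed. The remaining verifications are routine locality bookkeeping --- principally that every tensor in play lies in $\Q\fch\otimes_\top\Q\fch$, which holds because $\fraccop(\tilde f)$ and $\fraccop(g)$ are supported on the disjoint variable sets of $f$ and $g$.
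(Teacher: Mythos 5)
Your proposal is correct and follows essentially the same route as the paper: reduce to basis elements, induct on the total weight with the all-ones base case settled by the shuffle/deconcatenation compatibility, and handle the inductive step by writing $f$ as $\tfrac{1}{s_j-1}d_{i_j,i_{j-1}}(\tilde f)$, using that this derivation kills $g$ (hence also $\fraccop(g)$), the coderivation identity for $\fraccop$, and the fact that $\id\otimes d+d\otimes\id$ is a tensor-product derivation. You merely make the locality bookkeeping and the base-case shuffle identity slightly more explicit than the paper does.
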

\begin{proof}
We only need to prove this for $f,g\in \fch$.  The  conclusion is obvious if $f=1$ or $g=1$.

Let $f=\wvec{s_1,\cdots, s_k}{x_{i_1},\cdots ,x_{i_k}}$ and $g=\wvec{t_1,\cdots, t_\ell}{x_{j_1},\cdots , x_{j_\ell}}$. Since $f\top g$, the variables $x_{i_1},\cdots, x_{i_k}$, $x_{j_1},\cdots, x_{j_\ell}$  are distinct.

We apply the induction on the sum of weights 
$$w:=|\vec s|+|\vec t|=s_1+\cdots+s_k+t_1+\cdots+t_\ell\geq k+\ell.$$
If $w=k+\ell$, then $s_1=\cdots=s_k=t_1=\cdots=t_\ell=1$. In this case, the product of $f$ and $g$ follows the shuffle product and Eq.~\meqref{eq:delhom} is the known compatibility of the shuffle product and  the deconcatenation  coproduct in the usual shuffle Hopf algebra \mcite{H2}.

Now let $w=m\geq k+\ell$ and assume that Eq.~\meqref{eq:delhom} holds for  $w=m$. Consider the case of $w=m+1$. So either $s_p\geq 2$ for some $1\leq p\leq k$ or $t_q\geq 2$  for  some $1\leq q\leq \ell$. We only consider the former case, since the proof is the same for the latter case. Then we have
\begin{align*}
 &\fraccop\Big(\wvec{s_1,\cdots, s_k}{x_{i_1},\cdots, x_{i_k}}\cdot\wvec{t_1,\cdots, t_\ell}{x_{j_1},\cdots ,x_{j_\ell}}\Big)=\fraccop\Big(\frac{d_{i_p, i_{p-1}}}{s_p-1}(\wvec{s_1,\cdots,s_p-1,\cdots, s_k}{x_{i_1},\cdots,x_{i_p},\cdots, x_{i_k}})\cdot\wvec{t_1,\cdots, t_\ell}{x_{j_1},\cdots ,x_{j_\ell}}\Big)\\
\overset {\textcircled{1}}{=}&\fraccop\Big(\frac{d_{i_p, i_{p-1}}}{s_p-1}\big(\wvec{s_1,\cdots,s_p-1,\cdots, s_k}{x_{i_1},\cdots,x_{i_p},\cdots, x_{i_k}}\cdot\wvec{t_1,\cdots, t_\ell}{x_{j_1},\cdots ,x_{j_\ell}}\big)\Big)\\
=&\frac{\id\otimes d_{i_p, i_{p-1}}+d_{i_p, i_{p-1}}\otimes \id}{s_p-1}\Big(\fraccop\big(\wvec{s_1,\cdots,s_p-1,\cdots, s_k}{x_{i_1},\cdots,x_{i_p},\cdots, x_{i_k}}\cdot\wvec{t_1,\cdots, t_\ell}{x_{j_1},\cdots ,x_{j_\ell}}\big)\Big)\\
\overset {\textcircled{2}}{=}&\frac{\id\otimes d_{i_p, i_{p-1}}+d_{i_p, i_{p-1}}\otimes \id}{s_p-1}\Big(\fraccop(\wvec{s_1,\cdots, s_p-1,\cdots ,s_k}{x_{i_1},\cdots,x_{i_p},\cdots, x_{i_k}})\cdot\fraccop(\wvec{t_1,\cdots , t_\ell}{x_{j_1},\cdots, x_{j_\ell}})\Big)\\
\overset {\textcircled{3}}{=}&\frac{\id\otimes d_{i_p, i_{p-1}}+d_{i_p, i_{p-1}}\otimes \id}{s_p-1}\Big(\fraccop(\wvec{s_1,\cdots, s_p-1,\cdots ,s_k}{x_{i_1},\cdots,x_{i_p},\cdots, x_{i_k}})\Big)\cdot\fraccop(\wvec{t_1,\cdots , t_\ell}{x_{j_1},\cdots, x_{j_\ell}})\\
=&\fraccop\Big(\frac{d_{i_p, i_{p-1}}}{s_p-1}(\wvec{s_1,\cdots,s_p-1,\cdots, s_k}{x_{i_1},\cdots,x_{i_p},\cdots, x_{i_k}})\Big)\cdot\fraccop(\wvec{t_1,\cdots, t_\ell}{x_{j_1},\cdots ,x_{j_\ell}})\\
=&\fraccop(\wvec{s_1,\cdots ,s_p,\cdots, s_k}{x_{i_1},\cdots,x_{i_p},\cdots, x_{i_k}})\cdot\fraccop(\wvec{t_1,\cdots ,t_\ell}{x_{j_1},\cdots, x_{j_\ell}}).
\end{align*}
Here the equalities labeled by $\textcircled{1}$ and $\textcircled{3}$ follow from the fact that the variables in the first part are disjoint from the variables in the second part by the locality condition. $\textcircled{2}$ followed by induction hypothesis.

This completes the induction.
\end{proof}

\begin{theorem}
The  sextuple  $(\Q \fch, \top ,  \cdot, \fraccop,  1, \varepsilon, \{\partial_i\}_i)$ is  a locality connected multi-differential bialgebra and hence a locality multi-differential Hopf algebra as defined in Definition~\mref{de:ldha}.
\mlabel{t:fchDeltalocalhopf}
\end{theorem}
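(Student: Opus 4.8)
The plan is to assemble the theorem from the structures already established and then pass from a connected locality bialgebra to a locality Hopf algebra. By Proposition~\mref{p:chenlocda}, $(\Q\fch,\top,\cdot,1,\{\partial_i\}_i)$ is a locality multi-differential graded algebra; by Proposition~\mref{lem:localitycounitalcoalg} together with Eq.~\eqref{eq:DeltaDer}, $(\Q\fch,\top,\fraccop,\varepsilon,\{\partial_i\}_i)$ is a locality multi-differential coalgebra. So the first step is to check the bialgebra compatibility, i.e. that $\fraccop$ and $\varepsilon$ are locality algebra homomorphisms. Multiplicativity of $\fraccop$ on $\top$-related pairs is precisely Proposition~\mref{lem:Deltahomomorphism}, and $\fraccop(1)=1\otimes 1$ holds by definition; the locality compatibility $f\top g\Rightarrow\fraccop(f)\top\fraccop(g)$ is immediate from the observation, already used in the proof of Proposition~\mref{lem:localitycounitalcoalg}, that $\fraccop(f)$ involves only the variables occurring in $f$. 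For the counit, $\varepsilon(1)=1$, and since the product of two non-constant Chen fractions is a $\Q$-linear combination of Chen fractions of degree $\ge 2$ (degrees add under the graded product), one gets $\varepsilon(f\cdot g)=\varepsilon(f)\varepsilon(g)$ in every case.

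Second, I would verify that the locality bialgebra is graded and connected. Grading compatibility of the product is part of Proposition~\mref{p:chenlocda}. For the coproduct, one argues by induction on $|\vec s|$, exactly as in the coassociativity argument in the proof of Proposition~\mref{lem:localitycounitalcoalg}: the base case is the deconcatenation coproduct, which carries a degree-$k$ fraction into $\bigoplus_{p+q=k}(\Q\fch)_p\otimes(\Q\fch)_q$, and the inductive step uses that $d_{i_j,i_{j-1}}$ raises degree by one, so that $\id\otimes d_{i_j,i_{j-1}}+d_{i_j,i_{j-1}}\otimes\id$ raises total degree by exactly one on both sides of Eq.~\eqref{eq:chenrec}; the polar-subset refinement in Eq.~\eqref{eq:lconn} again reduces to the ``same variables'' remark. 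Connectedness is immediate: every non-constant Chen fraction $\wvec{s_1,\cdots,s_k}{x_{i_1},\cdots,x_{i_k}}$ has degree $s_1+\cdots+s_k\ge k\ge 1$, so $(\Q\fch)_0=\Q\,1$ is one-dimensional and $\bigoplus_{n\ge 1}(\Q\fch)_n=\ker\varepsilon$.

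Third, I would build the antipode by the usual degreewise recursion: set $S(1)=1$, and for a Chen fraction $f$ of degree $n\ge 1$ write the reduced coproduct $\overline{\fraccop}(f):=\fraccop(f)-1\otimes f-f\otimes 1=\sum f'\otimes f''$, where, by grading and connectedness, each $f'$ and $f''$ has degree strictly less than $n$; then set $S(f):=-f-\sum S(f')\cdot f''$. Here one checks that each $f'\top f''$ (in the deconcatenation base case the two tensor factors have disjoint variables, and the recursion Eq.~\eqref{eq:chenrec} preserves this, since $d_{i_j,i_{j-1}}$ only touches the variables $x_{i_{j-1}}, x_{i_j}$), so that $S(f')\cdot f''$ is a legitimate local product and the recursion stays inside $\Q\fch$; then $S\star\id=\id\star S=u\,\varepsilon$ follows by the standard induction on degree, and $a\top b\Rightarrow a\top S(b)$ because $S(b)$ is assembled from products and coproduct components of $b$, all depending only on the variables of $b$. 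This makes $(\Q\fch,\top,\cdot,\fraccop,1,\varepsilon,S)$ a locality Hopf algebra; since the $\partial_i$ are commuting locality derivations (Proposition~\mref{p:chenlocda}) and locality coderivations (Eq.~\eqref{eq:DeltaDer}), adjoining $\{\partial_i\}_i$ gives a locality multi-differential Hopf algebra in the sense of Definition~\mref{de:ldha}.

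I expect the only genuinely new point beyond bookkeeping to be the antipode: confirming that its degreewise recursion never leaves $\Q\fch$ --- equivalently, that each product it forms is between $\top$-related fractions --- and that the resulting $S$ is compatible with $\top$. Everything else is a reassembly of Propositions~\mref{p:chenlocda}, \mref{lem:localitycounitalcoalg}, and \mref{lem:Deltahomomorphism}. If the locality analogue of ``a connected graded locality bialgebra is a locality Hopf algebra'' is available in the cited literature (e.g.~\mcite{CGPZ} or~\mcite{GPZ1}), one may instead invoke it directly and omit the explicit antipode recursion.
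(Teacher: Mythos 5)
Your proposal is correct and follows essentially the same route as the paper: it assembles Propositions~\mref{p:chenlocda}, \mref{lem:localitycounitalcoalg} and \mref{lem:Deltahomomorphism}, checks that $\varepsilon$ is a locality algebra homomorphism, and establishes gradedness and connectedness by the same weight-induction. The only difference is that the paper omits your explicit antipode recursion and instead invokes exactly the result you anticipate in your closing remark, namely \cite[Proposition 5.6]{CGPZ}, that a connected (graded) locality bialgebra is a locality Hopf algebra.
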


\begin{proof} By Proposition \mref {p:chenlocda}, $(\Q \fch, \top, \cdot, 1,\{\partial_i\}_i)$ is  a  locality multi-differential algebra. By Proposition  \mref{lem:localitycounitalcoalg},  $(\Q\fch,$ $\top, \fraccop, \varepsilon,\{\partial_i\}_i)$  is  a  locality multi-differential coalgebra. Also, by  Lemma  \mref{lem:Deltahomomorphism},  $\fraccop$   is  a  locality  algebra  homomorphism  with  respect  to  the locality multiplication  $``\cdot"$.
	
We now check that $\varepsilon$ is a locality algebra homomorphism as follows.
\begin{enumerate}
  \item $\varepsilon(1)=1=\varepsilon (1)\cdot \varepsilon(1)$,
  \item For basis elements $f, g\in \fch$  such that  $f\top \ g$ and  $f\neq 1$ or  $g\neq 1$,
   $$\varepsilon(f\cdot g)=0=\varepsilon(f)\cdot\varepsilon(g).$$
\end{enumerate}
So  $(\Q \fch, \top ,  \cdot, \fraccop, 1, \varepsilon)$ is  a  locality  bialgebra.

We next show that the locality bialgebra $(\Q \fch, \top,  \cdot, \fraccop, 1, \varepsilon)$ is a connected graded locality bialgebra.
Let
$$F_0:=\{1\},
\quad F_m:=\big\{\wvec{s_1,\cdots,s_k}{x_{i_1},\cdots,x_{i_k}}\,\big| s_1+\cdots+s_k=m\,\big\}.
$$
Then we have the disjoint union
$\fch=\bigsqcup_{m=0}^{\infty}F_m,
$
and  $\Q\fch$  is a graded vector space:
$$\Q\fch=\bigoplus_{m=0}^{\infty}\Q F_m.$$
Since the grading is given by the degrees of fractions,   $(\Q \fch, \top,  \cdot, 1)$  is  a graded locality algebra.

To show that $(\Q\fch, \top, \fraccop, \varepsilon)$  is  a graded locality   coalgebra  under  this   grading,
fix some $U\subseteq \Q \fch$. Then  $1\in U^{T}$ and we  have
 $$\fraccop(1)=1\otimes 1\in (\Q F_0\cap U^{\top})\otimes_{\top}(\Q F_0\cap U^{\top}).
 $$
Next we verify
\begin{equation}
\fraccop(\Q F_m\cap U^{\top})\subseteq \bigoplus_{p+q=m} (\Q F_p\cap U^{\top})\otimes_{\top} (\Q F_q \cap U^{\top})
\mlabel{eq:cograde}
\end{equation}
by  induction  on  $m\geq 0$.
Let $\sum_{i}a_i h_i\in U^{\top}$  with $h_i\in \fch$. By the definition  of  locality  relation in $\Q \fch$, we have $h_i\in U^{\top}$. So  we  only  need to prove Eq. \meqref{eq:cograde} for basis elements, which take the form $h=\wvec{s_1,\cdots, s_k}{x_{i_1},\cdots, x_{i_k}}\in F_{m}\cap U^{\top}$.
Now the induction on $m\geq k$ proceeds as follows.

For the initial step $m=k$, we have $s_1=\cdots=s_k=1$ and  $h=\wvec{1,\cdots,1}{x_{i_1},\cdots,x_{i_k}}\in U^{\top}$. Then
 $$\fraccop(\wvec{1,\cdots,1}{x_{i_1},\cdots,x_{i_{k}}})=\sum_{j=0}^{k}\wvec{1,\cdots ,1}{x_{i_1},\cdots, x_{i_j}}\otimes\wvec{1,\cdots ,1}{x_{i_{j+1}},\cdots, x_{i_{k}}},
 $$
 which belongs to  $\bigoplus_{p+q=k} (\Q F_p\cap U^{\top})\otimes_{\top} (\Q F_q\cap U^{\top})$.

  Assume that Eq.~\meqref{eq:cograde} holds for $m\geq k$ and consider $\wvec{s_1,\cdots, s_k}{x_{i_1},\cdots, x_{i_k}}\in F_{m+1}\cap U^{\top}$. Then there is $s_j\ge 2$  for  some $1\le j\le k$. By Eq.(\ref{eq:chenrec}), we have
 {\small
 \begin{equation}
 \fraccop(\wvec{s_1,\cdots, s_{j},\cdots, s_k}{x_{i_1},\cdots, x_{i_j},\cdots, x_{i_k}}):=\frac{\id\otimes d_{i_j, i_{j-1}}+d_{i_j, i_{j-1}}\otimes \id}{s_j-1}\fraccop(\wvec{s_1,\cdots ,s_{j}-1,\cdots ,s_k}{x_{i_1},\cdots, x_{i_j},\cdots ,x_{i_k}}).
\mlabel{eq:coph}
\end{equation}}
The fraction  $\wvec{s_1,\cdots ,s_{j}-1,\cdots ,s_k}{x_{i_1},\cdots, x_{i_j},\cdots ,x_{i_k}}$ is in $F_m\cap U^{\top}$. So the induction hypothesis gives
 $$\fraccop(\wvec{s_1,\cdots ,s_{j}-1,\cdots ,s_k}{x_{i_1},\cdots, x_{i_j},\cdots ,x_{i_k}})\in \bigoplus_{p+q=m} (\Q F_p\cap U^{\top})\otimes_{\top} (\Q F_q\cap U^{\top}).
 $$
Thus applying Eq.~\meqref{eq:coph} yields
 $$\fraccop(\wvec{s_1,\cdots, s_{j},\cdots, s_k}{x_{i_1},\cdots, x_{i_j},\cdots, x_{i_k}})\in \bigoplus_{p+q=m+1} (\Q F_p \cap U^{\top})\otimes_{\top} (\Q F_q \cap U^{\top}).
 $$
To summarize, the sextuple  $(\Q\fch, \top, \cdot, \fraccop, 1, \varepsilon )$ is a locality graded bialgebra.

Obviously,
$$
\ker \vep=\bigoplus_{m\geq 1} \Q F_m, \quad \Q F_0= \Q\, 1.
$$
Hence $(\Q\fch, \top, \cdot, \fraccop, 1, \varepsilon )$ is a connected locality bialgebra, and thus a  locality Hopf algebra by~\cite[Proposition 5.6]{CGPZ}.
\end{proof}

\section {The MZV shuffle Hopf algebra}
\mlabel{s:mzvhopf}

Now we use a locality algebra homomorphism
$$\pi: \Q \fch\longrightarrow \calhd $$
to transfer the locality Hopf algebra structure on the space of Chen fractions $\Q\fch$ to a Hopf algebra structure on $\calhd$, which is then identified with the one introduced in Theorem~\mref{thm:shuhopf} defined by an operator recursion, thereby proving Theorem~\mref{thm:shuhopf}.

\subsection {The passage from Chen fractions to the \emzvsha} The locality multiplication of Chen fractions as functions is closely related to the shuffle product of MZVs.

Let $\cala =\{x_i\}_{i\in \Z _{\ge 0}} $ be an alphabet. As we have already seen, the linear space $\Q W$ with a basis $W=W_\cala$ of words in $\cala$ carries the shuffle product $\shap$. Let $W_1$ be the subset of words of the form
$$x_0^{s_1-1}x_{i_1}\cdots x_0^{s_k-1}x_{i_k},
$$
with $s_1, \cdots, s_k \in \Z _{\ge 1}$ and $i_1, \cdots , i_k$ distinct. Let ${\bf1}$ be the empty word. The set $W_1$ has a natural locality relation $\top_{W_1}$
\begin{equation}
\left\{\begin{array}{ll} (x_0^{s_1-1}x_{i_1}\cdots x_0^{s_k-1}x_{i_k})\top_{W_1} (x_0^{t_1-1}x_{j_1}\cdots x_0^{t_\ell -1}x_{j_\ell}), & \text{if }
\{i_1, \cdots, i_k\}\cap \{j_1, \cdots, j_\ell \}=\emptyset, \\
{\bf 1} \top_{W_1}w,&  w\in W_1.
\end{array} \right .
\mlabel{eq:wordloc}
\end{equation}
This locality relation extends to a locality relation $\top _{\Q W_1}$ on $\Q W_1$. It is easy to see that the restriction of the shuffle product on $\Q W$ equips $\Q W_1$ with a locality commutative algebra structure. We further have
\begin {prop}
\mlabel{prop:psimorphism}
There is the algebra homomorphism
\begin{equation}
	\psi: (\Q W,\shap)\to (\Q \langle x_0, x_1 \rangle x_1,\shap),
\quad \left \{ \begin{array}{l} {\bf1}\longmapsto {\bf1}, \\
x_0^{s_1-1}x_{i_1}\cdots x_0^{s_k-1}x_{i_k}\mapsto  x_0^{s_1-1}x_{1}\cdots x_0^{s_k-1}x_{1}.
\end{array} \right .
\mlabel{eq:psi}
\end{equation}
With the locality on $\Q W_1$ defined in Eq.~\meqref{eq:wordloc} and the full locality $\top_{\Q \langle x_0, x_1 \rangle x_1} =\Q \langle x_0, x_1 \rangle x_1\times \Q \langle x_0, x_1 \rangle x_1$ on $\Q \langle x_0, x_1 \rangle x_1$, $\psi$ restricts to a  locality algebra homomorphism
$$\psi: (\Q W_1,\shap)\to (\Q \langle x_0, x_1 \rangle x_1,\shap).$$
\end{prop}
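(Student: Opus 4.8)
The plan is to exhibit $\psi$ as the linear extension of a letter-substitution map on the alphabet $\cala$, to invoke the standard fact that letter substitutions are homomorphisms of shuffle algebras, and then to read off the compatibilities with $\Q W_1$ and with the two localities.

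First I would define $\psi$ on $\cala$ by $x_0\mapsto x_0$ and $x_i\mapsto x_1$ for every $i\ge 1$, extend it to words by $\psi(a_1\cdots a_n):=\psi(a_1)\cdots\psi(a_n)$, and extend it $\Q$-linearly to $\Q W$. On a word $x_0^{s_1-1}x_{i_1}\cdots x_0^{s_k-1}x_{i_k}$ this returns exactly $x_0^{s_1-1}x_1\cdots x_0^{s_k-1}x_1$, and $\psi({\bf1})={\bf1}$, so $\psi$ agrees with the formula in Eq.~\meqref{eq:psi}, and it is well defined since it is prescribed on letters. Because $s_k\ge 1$, the image of such a word ends in the letter $x_1$ and therefore lies in $\Q\langle x_0,x_1\rangle x_1$; together with $\psi({\bf1})={\bf1}$ this shows that $\psi$ carries the span of such block words, in particular $\Q W_1$, into $\Q\langle x_0,x_1\rangle x_1$.

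Next I would prove that $\psi$ respects $\shap$. Using the recursive description of the shuffle product, namely ${\bf1}\shap w=w\shap{\bf1}=w$ and $ua\shap vb=(u\shap vb)a+(ua\shap v)b$ for words $u,v$ and letters $a,b$, together with the concatenation-compatibility of $\psi$, a routine induction on the sum of the lengths of $u$ and $v$ gives $\psi(u\shap v)=\psi(u)\shap\psi(v)$ for all words $u,v$. Hence $\psi$ is a unital shuffle-algebra homomorphism, and on the words occurring in Eq.~\meqref{eq:psi} it takes values in $\Q\langle x_0,x_1\rangle x_1$, which establishes the first assertion.

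Finally, for the restricted map $\psi\colon(\Q W_1,\shap)\to(\Q\langle x_0,x_1\rangle x_1,\shap)$ I would verify the three conditions of Eq.~\meqref{eq:lochom}. Unitality and multiplicativity on pairs for which the locality product is defined are inherited verbatim from the homomorphism on $\Q W$ established above, using that $\Q W_1$ is closed under the locality $\shap$-product as recalled before the statement; and the implication $a\,\top_{\Q W_1}\,b\Rightarrow\psi(a)\,\top_{\Q\langle x_0,x_1\rangle x_1}\,\psi(b)$ is automatic, since the locality chosen on the target is the full relation $\Q\langle x_0,x_1\rangle x_1\times\Q\langle x_0,x_1\rangle x_1$. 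The only step with genuine content is the induction of the previous paragraph; everything else is bookkeeping, the single point deserving care being that the image of a block word ends in $x_1$, so that it lands in $\Q\langle x_0,x_1\rangle x_1$ and not merely in $\Q\langle x_0,x_1\rangle$.
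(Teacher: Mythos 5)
The paper states this proposition without proof, treating it as routine, and your argument supplies exactly the intended justification: $\psi$ is the letter-substitution $x_0\mapsto x_0$, $x_i\mapsto x_1$ ($i\ge 1$), which is a shuffle homomorphism by the standard induction on the recursive definition of $\shap$, and the locality conditions of Eq.~\meqref{eq:lochom} are then immediate since the target carries the full locality. Your proof is correct; the only point of friction is the paper's own looseness (a word of $W$ ending in $x_0$ is not covered by Eq.~\meqref{eq:psi} and its image does not lie in $\Q\langle x_0,x_1\rangle x_1$), which you handle reasonably by noting that the words relevant to the restriction to $\Q W_1$ do land in the stated target.
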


On the other hand, we quote the following result.
\begin {prop} \mcite {GPZ1}
\mlabel{prop:phiisomorphism}
The the linear map
\begin{equation}
	\phi: \Q \fch \to \Q W_1,\quad \left\{\begin{array}{l} 1\longmapsto {\bf1},\\
		\wvec{s_1, \cdots, s_k}{x_{i_1},\cdots, x_{i_k}} \mapsto x_0^{s_1-1}x_{i_1}\cdots x_0^{s_k-1}x_{i_k},
	\end{array}\right .
\mlabel{eq:phi}
\end{equation}
is a locality algebra isomorphism from $(\Q F ^{ch}, \top, \cdot)$ to
$(\Q W_1, \top_{\Q W_1}, \shap)$, where $\cdot$ is the function multiplication.
\end{prop}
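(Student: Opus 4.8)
The plan is to verify the three requirements of a locality algebra isomorphism: that $\phi$ is a linear bijection, that $\phi$ and $\phi^{-1}$ preserve the locality relations, and that $\phi$ carries the function product to the shuffle product. The first two are routine. By \mcite{GPZ1} the set $\fch$ is a linear basis of $\Q\fch$, and by construction $W_1$ is a linear basis of $\Q W_1$; since $\phi$ restricts to a bijection between these bases --- its inverse reads the exponents $s_j-1$ off the runs of $x_0$'s and the variable sequence $i_1,\dots,i_k$ off the non-$x_0$ letters --- it extends uniquely to a linear isomorphism. For locality, $f\top g$ for $f,g\in\fch$ means precisely that the variable sets of $f$ and $g$ are disjoint, while $\phi(f)\top_{W_1}\phi(g)$, by Eq.~\eqref{eq:wordloc}, means precisely that the index sets occurring in the two words are disjoint --- the same condition --- and $1$ and ${\bf 1}$ are local to everything on the respective sides. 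Extending bilinearly, $a\top b$ if and only if $\phi(a)\top_{\Q W_1}\phi(b)$.

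It remains to prove the multiplicativity $\phi(f\cdot g)=\phi(f)\shap\phi(g)$ for $f\top g$, which we may assume $f,g\in\fch$; the case $f=1$ or $g=1$ is trivial, so write $f=\wvec{s_1,\cdots,s_k}{x_{i_1},\cdots,x_{i_k}}$ and $g=\wvec{t_1,\cdots,t_\ell}{x_{j_1},\cdots,x_{j_\ell}}$ with all indices distinct, and induct on the total weight $|\vec s|+|\vec t|$. In the base case all $s_p$ and $t_q$ equal $1$, so $f=\frac{1}{y_1\cdots y_k}$ and $g=\frac{1}{z_1\cdots z_\ell}$ are products of flags of linear forms $y_p=x_{i_p}+\cdots+x_{i_k}$, $z_q=x_{j_q}+\cdots+x_{j_\ell}$. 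Iterating the elementary partial-fraction identity $\frac1{AB}=\frac1{(A+B)A}+\frac1{(A+B)B}$, valid for sums $A,B$ over disjoint variable sets, decomposes $f\cdot g$ as the sum, over all interleavings of $(x_{i_1},\dots,x_{i_k})$ with $(x_{j_1},\dots,x_{j_\ell})$, of the all-ones Chen fractions built from the partial sums of the interleaved sequence. Applying $\phi$ turns this sum into the ordinary shuffle $x_{i_1}\cdots x_{i_k}\shap x_{j_1}\cdots x_{j_\ell}=\phi(f)\shap\phi(g)$ of two words with pairwise distinct letters, which settles the base case.

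For the inductive step, some exponent exceeds $1$, say $s_p\geq 2$ (the case $t_q\geq 2$ is symmetric). By Eq.~\eqref{eq:chenfractionpartial}, $f=\frac{1}{s_p-1}\,d_{i_p,i_{p-1}}f'$ with $f'=\wvec{s_1,\cdots,s_p-1,\cdots,s_k}{x_{i_1},\cdots,x_{i_k}}$ of weight one less and $d_{i_p,i_{p-1}}=\partial_{i_p}-\partial_{i_{p-1}}$ a derivation of the function algebra (here $x_{i_0}$ denotes an auxiliary variable occurring nowhere). Since $g$ uses neither $x_{i_p}$ nor $x_{i_{p-1}}$, we get $d_{i_p,i_{p-1}}(g)=0$, hence $d_{i_p,i_{p-1}}(f'\cdot g)=d_{i_p,i_{p-1}}(f')\cdot g=(s_p-1)\,f\cdot g$. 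Transport $d_{i_p,i_{p-1}}$ to the word side by $D:=\phi\,d_{i_p,i_{p-1}}\,\phi^{-1}$ on $\Q W_1$; explicitly $D$ is an $x_0$-insertion operator, sending a word $w$ to the signed sum of the words obtained by inserting one $x_0$ into $w$ at a gap between the (single) occurrences of $x_{i_{p-1}}$ and $x_{i_p}$, with the obvious modifications when one of these letters is absent. The key combinatorial fact to establish (a short induction on word length from the recursive definition of $\shap$) is that $D$ is a derivation for $\shap$ and that $D(\phi(g))=0$. Granting this, applying $\phi$ to the identity $d_{i_p,i_{p-1}}(f'\cdot g)=d_{i_p,i_{p-1}}(f')\cdot g$, then the definition of $D$, then the inductive hypothesis $\phi(f'\cdot g)=\phi(f')\shap\phi(g)$, then the derivation property with $D(\phi(g))=0$, and once more Eq.~\eqref{eq:chenfractionpartial}, yields
$$
(s_p-1)\,\phi(f\cdot g)=\phi\bigl(d_{i_p,i_{p-1}}(f')\cdot g\bigr)=D\bigl(\phi(f')\shap\phi(g)\bigr)=D\bigl(\phi(f')\bigr)\shap\phi(g)=(s_p-1)\,\phi(f)\shap\phi(g),
$$
and dividing by $s_p-1$ completes the induction, hence the proof.

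The main obstacle is concentrated in the multiplicativity step: pinning down the partial-fraction decomposition of a product of flags in the base case (equivalently, the simplicial subdivision of a product of simplices), and verifying that the transported operators $D$ are genuine shuffle derivations in the inductive step. An alternative route that sidesteps the latter is geometric: each Chen fraction is the multivariable Laplace transform of the indicator of a Chen cone, the product of two such cones in disjoint variable sets subdivides up to a null set into the simplicial cones indexed by shuffles, and applying $\phi$ reproduces the shuffle product; this is essentially the argument underlying \mcite{GPZ2,GPZ1}.
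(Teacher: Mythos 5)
Your proof is correct in outline, but note that the paper itself gives no argument for this proposition: it is quoted from \mcite{GPZ1}, where it is obtained essentially by the geometric route you mention at the end (Chen fractions as integrals/Laplace transforms attached to Chen cones, with the product of two cones in disjoint variables subdividing into the simplicial cones indexed by shuffles, as in \mcite{GPZ2,GPZ1}). Your blind argument is therefore a genuinely different, self-contained algebraic proof: bijectivity on the bases $\fch$ and $W_1$, preservation and reflection of the locality relations (both amount to disjointness of variable, respectively letter, sets), the classical partial-fraction identity $\tfrac1{AB}=\tfrac1{(A+B)A}+\tfrac1{(A+B)B}$ iterated to get the base case where all exponents equal $1$, and then an induction on total weight in which the derivation $d_{i_p,i_{p-1}}$ is transported through $\phi$ to an $x_0$-insertion operator $D$ on words. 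This is exactly the weight-induction-by-derivations strategy the paper uses elsewhere (for the coproduct in Proposition~\mref{lem:Deltahomomorphism} and in Section~\mref{ss:twocoprod}), so your route fits the paper's philosophy while the paper itself outsources the statement. The two points you correctly flag as carrying the weight are real but true and provable: the shuffle subdivision of a product of all-ones flags is the standard recursion on the leading linear form matching the first-letter recursion of $\shap$, and the fact that the operator summing insertions of one $x_0$ over all gaps preceding a marked letter $a$ satisfies $T_a(u\shap v)=T_a(u)\shap v+u\shap T_a(v)$ for words with disjoint letters follows from a bijection between pairs (interleaving of $u,v$; insertion gap before $a$) and pairs (insertion into the factor containing $a$; interleaving of the enlarged word with the other factor); note that $D(\phi(g))=0$ is automatic from $D=\phi\,d_{i_p,i_{p-1}}\,\phi^{-1}$ and needs no combinatorics. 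What your approach buys is independence from the meromorphic-germ machinery of \mcite{GPZ1}; what the citation buys the paper is brevity and consistency with the locality framework it imports wholesale.
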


\begin {remark} Proposition \mref{prop:phiisomorphism} demonstrates the advantage of working in the locality setting. The linear map $\phi$ is not an algebra homomorphism without the locality condition. For example,
$$\phi (\wvec{1}{x_{1}}\wvec{1}{x_{1}})=\phi (\frac 1{x_1^2})=\phi (\wvec{2}{x_{1}})=x_0x_1;
$$
but
$$
\phi (\wvec{1}{x_{1}})\shap \phi (\wvec{1}{x_{1}})=x_1\shap x_1=2x_1x_1.
$$
\end{remark}

Combining Propositions \mref {prop:psimorphism}, \mref {prop:phiisomorphism}, and Eq. (\mref {eq:rho}), we have
\begin {coro}
\mlabel {coro:pi} The composition
$$\pi =\rho ^{-1}  \psi   \phi : (\Q \fch,\cdot) \to (\calhd,\shapt)$$
as shown in the diagram
\begin{equation}
\begin{split}
\xymatrix{ \Q W_1 \ar[rr]^\psi && \Q \langle x_0, x_1 \rangle x_1 \\
\Q \fch \ar@{-->}[rr]^\pi \ar[u]^\phi && \calhd \ar[u]^\rho}
\end{split}
\mlabel{eq:phipsidiag}
\end{equation}
is a locality algebra homomorphism, with the locality function multiplication on $\Q \fch$ and the locality shuffle product $\shapt$ on $\calhd$, equipped with the full locality $\calhd\times \calhd$.
\end{coro}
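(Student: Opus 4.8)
The plan is to read off $\pi$ directly from the diagram~\eqref{eq:phipsidiag} as the composite $\rho^{-1}\circ\psi\circ\phi$ of three maps, each of which is already known (or trivially seen) to be a locality algebra homomorphism, and then to invoke the fact that locality algebra homomorphisms compose, provided the locality relations match up along the way. So the argument reduces to assembling three ingredients, two of which are the previously proved Propositions~\ref{prop:phiisomorphism} and~\ref{prop:psimorphism}.

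First I would recall that, by Proposition~\ref{prop:phiisomorphism}, $\phi\colon(\Q\fch,\top,\cdot)\to(\Q W_1,\top_{\Q W_1},\shap)$ is a locality algebra isomorphism; in particular $f\top g$ implies $\phi(f)\top_{\Q W_1}\phi(g)$, $\phi(f\cdot g)=\phi(f)\shap\phi(g)$, and $\phi(1)={\bf1}$. Next, by Proposition~\ref{prop:psimorphism}, $\psi\colon(\Q W_1,\top_{\Q W_1},\shap)\to(\Q\langle x_0,x_1\rangle x_1,\,\top_{\mathrm{full}},\shap)$ is a locality algebra homomorphism, where $\top_{\mathrm{full}}$ is the full locality; crucially, the source locality here is exactly the $\top_{\Q W_1}$ onto which $\phi$ maps $\Q\fch$, so the two maps compose as locality algebra homomorphisms and $\psi\phi\colon(\Q\fch,\top,\cdot)\to(\Q\langle x_0,x_1\rangle x_1,\top_{\mathrm{full}},\shap)$ is again a locality algebra homomorphism. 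Finally, $\rho\colon(\calhd,\shapt)\to(\Q\langle x_0,x_1\rangle x_1,\shap)$ is by construction (Eq.~\eqref{eq:rho2}, with $\shapt$ the pullback of $\shap$) an algebra isomorphism, so $\rho^{-1}$ is an algebra isomorphism in the other direction with $\rho^{-1}({\bf1})={\bf1}$; since both $\Q\langle x_0,x_1\rangle x_1$ and $\calhd$ carry the full locality relation, the implication $a\top b\Rightarrow\rho^{-1}(a)\top\rho^{-1}(b)$ is vacuous, and hence $\rho^{-1}$ is a locality algebra homomorphism for free.

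Putting these together, I would check the three defining conditions for $\pi=\rho^{-1}\psi\phi$: for $f\top g$ in $\Q\fch$ one gets $\phi(f)\top_{\Q W_1}\phi(g)$, then $\psi\phi(f)\top_{\mathrm{full}}\psi\phi(g)$, then (trivially) $\pi(f)\top\pi(g)$ in $\calhd$; multiplicativity follows from
$$\pi(f\cdot g)=\rho^{-1}\psi\big(\phi(f)\shap\phi(g)\big)=\rho^{-1}\big(\psi\phi(f)\shap\psi\phi(g)\big)=\pi(f)\shapt\pi(g);$$
and $\pi(1)=\rho^{-1}\psi\phi(1)=\rho^{-1}({\bf1})={\bf1}$. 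The commutativity of the diagram~\eqref{eq:phipsidiag} holds by the very definition $\pi=\rho^{-1}\psi\phi$. One also records the explicit effect on the basis, $\pi(1)={\bf1}$ and $\pi\big(\wvec{s_1,\cdots,s_k}{x_{i_1},\cdots,x_{i_k}}\big)=[s_1,\cdots,s_k]$, for later use.

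There is no real obstacle here; the only point requiring care is bookkeeping of the locality relations along the composition — namely that $\Q W_1$ is given the one locality $\top_{\Q W_1}$ by both $\phi$ and $\psi$, and that the passage to the full locality on $\Q\langle x_0,x_1\rangle x_1$ and on $\calhd$ turns $\rho^{-1}$ into a locality morphism automatically. As an alternative one could bypass the diagram and verify directly on basis fractions, using Proposition~\ref{prop:localchen} and the Euler/shuffle expansion of $\wvec{\vec s}{\vec x}\cdot\wvec{\vec t}{\vec y}$ for disjoint variables, that $\pi$ intertwines $\cdot$ with $\shapt$; but routing through the already established Propositions~\ref{prop:phiisomorphism} and~\ref{prop:psimorphism} is cleaner and avoids recomputing the product formula.
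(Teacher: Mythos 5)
Your proposal is correct and matches the paper's argument: the corollary is obtained precisely by combining Proposition~\ref{prop:psimorphism}, Proposition~\ref{prop:phiisomorphism}, and the algebra isomorphism $\rho$ of Eq.~\eqref{eq:rho2}, exactly as you do, with the full locality on $\Q\langle x_0,x_1\rangle x_1$ and $\calhd$ making $\rho^{-1}$ a locality morphism for free. Your explicit unwinding of the three homomorphism conditions is just a slightly more detailed write-up of the same composition argument.
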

\begin{remark}\mlabel{rem:piotimespiishomo}
Consequently, $\pi\otimes\pi: \Q \fch \otimes\Q\fch \to \calhd\otimes\calhd$ is also a locality algebra homomorphism.
\end{remark}
\subsection{The descent Hopf algebra} For $[s_1,\cdots,s_k]\in \calhd$, take subsets $\{i_1,\cdots, i_k\}$ and $\{j_1,\cdots, j_k\}$ of  $k$ distinct elements in $\Z_{\ge 1}$. By the definition of $\fraccop: \Q \fch \to \Q \fch\otimes \Q \fch$, we see that $\fraccop$  commutes  with  changing of  variables:
 $$\Big(\fraccop\big(\wvec{s_1,\cdots,s_k}{x_{i_1},\cdots,x_{i_k}}\big)\Big)\Big|_{x_{i_1}\mapsto x_{j_1}, \cdots, x_{i_k}\mapsto x_{j_k}} =\fraccop\big(\wvec{s_1,\cdots,s_k}{x_{j_1},\cdots,x_{j_k}}\big).
 $$
Thus we have
 \begin{lemma}
 \mlabel{lem:nodependchoice}
 The element  $(\pi\otimes \pi)\fraccop(\wvec{s_1,\cdots,s_k}{x_{i_1},\cdots,x_{i_k}})$  does  not depend  on  the choice of the distinct elements $i_1,\cdots,i_k\in \Z_{\geq 1}$.
 \end{lemma}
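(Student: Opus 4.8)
The plan is to exploit the fact that the projection $\pi=\rho^{-1}\psi\phi$ retains only the exponent vector of a Chen fraction and forgets which variables occur in it. Reading off the defining formulas for $\phi$, $\psi$ and $\rho$ in Eqs.~\meqref{eq:phi}, \meqref{eq:psi} and \meqref{eq:rho}, for \emph{any} choice of distinct $i_1,\cdots,i_k\in\Z_{\ge 1}$ one has
$$\pi\big(\wvec{s_1,\cdots,s_k}{x_{i_1},\cdots,x_{i_k}}\big)=[s_1,\cdots,s_k],$$
with no dependence on $i_1,\cdots,i_k$. Now fix a second tuple $j_1,\cdots,j_k$ of distinct elements of $\Z_{\ge 1}$, and let $\theta$ denote the substitution $x_{i_\ell}\mapsto x_{j_\ell}$, $1\le\ell\le k$, performed simultaneously on rational functions in $x_{i_1},\cdots,x_{i_k}$. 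Since $\ell\mapsto j_\ell$ is injective, $\theta$ carries each Chen fraction in the variables $x_{i_1},\cdots,x_{i_k}$ to a Chen fraction in $x_{j_1},\cdots,x_{j_k}$ with the same exponent vector; combined with the displayed formula for $\pi$ this gives $\pi\circ\theta=\pi$ on the span of such fractions, and therefore $(\pi\otimes\pi)\circ(\theta\otimes\theta)=\pi\otimes\pi$ there.

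The second ingredient I would check is that $\theta\otimes\theta$ really does apply to $\fraccop\big(\wvec{s_1,\cdots,s_k}{x_{i_1},\cdots,x_{i_k}}\big)$, i.e. that every Chen fraction occurring in this element involves only the variables $x_{i_1},\cdots,x_{i_k}$. This is a one-line induction along the recursion in Definition~\mref{d:chencop}: the base case $s_1=\cdots=s_k=1$ is the deconcatenation of Eq.~\meqref{eq:deconcan}, all of whose summands have this property, and the recursive step applies the derivations $d_{i_j,i_{j-1}}=\partial_{i_j}-\partial_{i_{j-1}}$, which obviously preserve the set of variables used. With this in place, the change-of-variables identity recorded just before the lemma reads
$$\fraccop\big(\wvec{s_1,\cdots,s_k}{x_{j_1},\cdots,x_{j_k}}\big)=(\theta\otimes\theta)\,\fraccop\big(\wvec{s_1,\cdots,s_k}{x_{i_1},\cdots,x_{i_k}}\big).$$

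Finally I would apply $\pi\otimes\pi$ to both sides of the last display and use $(\pi\otimes\pi)\circ(\theta\otimes\theta)=\pi\otimes\pi$ to conclude
$$(\pi\otimes\pi)\fraccop\big(\wvec{s_1,\cdots,s_k}{x_{j_1},\cdots,x_{j_k}}\big)=(\pi\otimes\pi)\fraccop\big(\wvec{s_1,\cdots,s_k}{x_{i_1},\cdots,x_{i_k}}\big),$$
which is exactly the assertion. There is no real obstacle here: everything reduces to the observation that $\pi$ is blind to variable names, so $\pi\circ\theta=\pi$. The only point that needs a moment's attention is the bookkeeping of the second paragraph — that $\fraccop$ introduces no variable outside $\{x_{i_1},\cdots,x_{i_k}\}$ — and even this is already implicit in the statement of the change-of-variables identity, so in a fully written proof one may simply cite that identity and skip the verification.
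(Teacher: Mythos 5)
Your proposal is correct and follows essentially the same route as the paper: the paper's proof consists of exactly the change-of-variables identity for $\fraccop$ stated just before the lemma together with the observation that $\pi$ depends only on the exponent vector, which is what your $\pi\circ\theta=\pi$ argument formalizes. Your extra verification that $\fraccop$ introduces no new variables is a harmless elaboration of what the paper leaves implicit.
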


 Therefore we obtain a linear map

\begin{equation}
\descop: \calhd\longrightarrow \calhd\otimes \calhd,
 \left\{ \begin{array}{l} \descop({\bf1}):={\bf1}\otimes {\bf1}, \\ \descop([s_1,\cdots, s_k]):=(\pi\otimes \pi)\fraccop(\wvec{s_1,\cdots,s_k}{x_{i_1},\cdots,x_{i_k}}),
 \{i_1,\cdots, i_k\}\subseteq \Z_{\ge 1},
\end{array} \right .
\mlabel{eq:descop}
\end{equation}
which is well-defined by Lemma \ref{lem:nodependchoice}.
The definition of $\descop$ gives
\begin{equation}
	\descop  \pi=(\pi\otimes \pi)  \fraccop.
\mlabel{eq:pidelta}
\end{equation}
Corollary~\mref{coro:pi} (resp. Eq.~\meqref{eq:pidelta}) amounts to say that the left square (resp. right square) of following diagram commutes.
$$\xymatrix{
\Q\fch \ot \Q\fch \ar[d]_{\pi \ot \pi} \ar[rr]^{ \cdot } &&  \Q\fch \ar[d]_{\pi} \ar[rr]^{\fraccop} && \Q\fch\otimes\Q\fch \ar[d]^{\pi\otimes\pi} \\
\calhd\ot \calhd \ar[rr]^{\shapt} &&  \calhd \ar[rr]^{\descop} && \calhd\otimes \calhd   }
$$

Thus we obtain
\begin{lemma}
For any  set  with $k+\ell$ distinct positive integers
$$\{i_1,\cdots,i_k, j_1,\cdots, j_{\ell}\},$$
 and $(s_1,\cdots,s_k)\in \Z_{\ge 1}^{k}, (t_1,\cdots, t_\ell)\in\Z_{\ge 1}^\ell$, we have
$$(\pi\otimes\pi)\fraccop(\wvec{s_1,\cdots,s_k}{x_{i_1},\cdots,x_{i_k}}\cdot\wvec{t_1,\cdots,t_\ell}{x_{j_1},\cdots,x_{j_\ell}})=\descop([s_1,\cdots,s_k]\shapt [t_1,\cdots,t_\ell]).
$$
\mlabel{lem:only 1}
\end{lemma}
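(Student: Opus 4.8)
The plan is to deduce the identity formally, with no induction and no explicit manipulation of the shuffle product, by combining two facts already established: that $\pi$ is a locality algebra homomorphism (Corollary~\ref{coro:pi}), and the intertwining relation $\descop\,\pi=(\pi\otimes\pi)\,\fraccop$ of Eq.~\eqref{eq:pidelta}. The point is that the claimed equation is just these two facts composed, read off after checking a locality hypothesis.

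First I would record that, since $\{i_1,\dots,i_k\}\cap\{j_1,\dots,j_\ell\}=\emptyset$, the two Chen fractions $\wvec{s_1,\cdots,s_k}{x_{i_1},\cdots,x_{i_k}}$ and $\wvec{t_1,\cdots,t_\ell}{x_{j_1},\cdots,x_{j_\ell}}$ have disjoint variable sets and hence lie in the locality relation $\top$ of $\Q\fch$. By Proposition~\ref{prop:localchen} (equivalently Proposition~\ref{p:chenlocda}) their function product is again an element of $\Q\fch$, so it is a legitimate argument for $\fraccop$ and for $\pi$. It also suffices to prove the statement for these two basis elements, the general bilinear case following at once.

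Next I would apply Eq.~\eqref{eq:pidelta} to this product, rewriting $(\pi\otimes\pi)\fraccop\big(\wvec{s_1,\cdots,s_k}{x_{i_1},\cdots,x_{i_k}}\cdot\wvec{t_1,\cdots,t_\ell}{x_{j_1},\cdots,x_{j_\ell}}\big)$ as $\descop\,\pi\big(\wvec{s_1,\cdots,s_k}{x_{i_1},\cdots,x_{i_k}}\cdot\wvec{t_1,\cdots,t_\ell}{x_{j_1},\cdots,x_{j_\ell}}\big)$. Because the two fractions are $\top$-related and $\pi\colon(\Q\fch,\cdot)\to(\calhd,\shapt)$ is a locality algebra homomorphism by Corollary~\ref{coro:pi}, $\pi$ carries their product to $\pi\big(\wvec{s_1,\cdots,s_k}{x_{i_1},\cdots,x_{i_k}}\big)\shapt\pi\big(\wvec{t_1,\cdots,t_\ell}{x_{j_1},\cdots,x_{j_\ell}}\big)$; unwinding $\pi=\rho^{-1}\psi\phi$ on a single Chen fraction gives $\pi\big(\wvec{s_1,\cdots,s_k}{x_{i_1},\cdots,x_{i_k}}\big)=[s_1,\cdots,s_k]$ and similarly for the $t$-fraction. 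Hence the argument of $\descop$ is $[s_1,\cdots,s_k]\shapt[t_1,\cdots,t_\ell]$, which is the assertion. I would present this as a three-line displayed chain of equalities annotated by Eq.~\eqref{eq:pidelta}, Corollary~\ref{coro:pi}, and the definition of $\pi$.

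I do not expect a genuine obstacle: the lemma is a bookkeeping consequence of tools already in hand. The only point demanding care is tracking the locality hypotheses, i.e. confirming that the relevant elements lie in the locality relation so that the homomorphism property of $\pi$ may legitimately be invoked (and, should one prefer the alternative route through Proposition~\ref{lem:Deltahomomorphism} and Remark~\ref{rem:piotimespiishomo}, that $\fraccop(f)$ and $\fraccop(g)$ are $\top$-related in $\Q\fch\otimes\Q\fch$ because they involve the same variables as $f$ and $g$). Either route gives the same short proof.
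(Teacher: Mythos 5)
Your proposal is correct and is essentially the paper's own argument: the paper obtains this lemma directly by composing the two commutative squares, namely Corollary~\mref{coro:pi} (so that $\pi(f\cdot g)=\pi(f)\shapt\pi(g)=[s_1,\cdots,s_k]\shapt[t_1,\cdots,t_\ell]$ for the $\top$-related fractions) with Eq.~\meqref{eq:pidelta} (so that $(\pi\otimes\pi)\fraccop=\descop\,\pi$). Your locality check on the disjoint variable sets and the unwinding of $\pi=\rho^{-1}\psi\phi$ on a single Chen fraction are exactly the bookkeeping the paper leaves implicit.
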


\begin{theorem}
With $\vep_{\ge 1}$ defined in Eq.~\meqref{eq:counit}, the quintuple $(\calhd, \shap, {\bf 1}, \descop, \vep_{\ge 1})$  is a connected graded bialgebra and hence a Hopf algebra.
	\mlabel{t:deschopf}
\end{theorem}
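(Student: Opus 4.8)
The plan is to transfer the entire bialgebra structure along the surjective locality algebra homomorphism $\pi:\Q\fch\to\calhd$. By Theorem~\mref{t:fchDeltalocalhopf}, $(\Q\fch,\top,\cdot,\fraccop,1,\varepsilon)$ is a connected graded locality bialgebra, and by Corollary~\mref{coro:pi} together with Eq.~\meqref{eq:pidelta}, $\pi$ intertwines the products and the coproducts (with the full locality on $\calhd$). So the heart of the argument is that each bialgebra axiom for $(\calhd,\shapt,{\bf1},\descop,\vep_{\ge1})$ is the image under $\pi$ (or $\pi\otimes\pi$, or $\pi\otimes\pi\otimes\pi$) of the corresponding locality axiom for $\Q\fch$, once we check the source objects on which we evaluate are in the appropriate polar sets so the locality axioms apply.

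**Key steps, in order.** First, I would record that $\pi$ is surjective: every $[s_1,\ldots,s_k]$ equals $\pi(\wvec{s_1,\cdots,s_k}{x_{i_1},\cdots,x_{i_k}})$ for any choice of distinct indices, and ${\bf1}=\pi(1)$. Second, coassociativity of $\descop$: apply $(\id\ot\descop)\descop\,\pi = (\pi\ot\pi\ot\pi)(\id\ot\fraccop)\fraccop$, which follows by iterating Eq.~\meqref{eq:pidelta} and using that $\pi\ot\pi$ is an algebra homomorphism (Remark~\mref{rem:piotimespiishomo}); the analogous identity holds for $(\descop\ot\id)\descop\,\pi$, and the two right-hand sides agree by coassociativity of $\fraccop$ proved inside Proposition~\mref{prop:Deltacoasso}. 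Since $\pi$ is surjective this gives coassociativity on $\calhd$. Third, the counit: $\vep_{\ge1}\,\pi = \varepsilon$ by comparing on the basis $\fch$, so $(\id\ot\vep_{\ge1})\descop\,\pi = \pi\,(\id\ot\varepsilon)\fraccop = \pi$ and likewise on the other side, giving the counit axioms on $\calhd$. Fourth, compatibility of $\descop$ with $\shapt$: this is exactly Lemma~\mref{lem:only 1} combined with Eq.~\meqref{eq:delhom}, namely $\descop([\vec s]\shapt[\vec t]) = (\pi\ot\pi)\fraccop(f\cdot g) = (\pi\ot\pi)(\fraccop(f)\cdot\fraccop(g)) = \descop([\vec s])\cdot\descop([\vec t])$, where $f,g$ are Chen fractions on disjoint variable sets so $f\top g$; one also checks $\vep_{\ge1}$ is an algebra homomorphism directly on the basis. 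Fifth, the grading: put $(\calhd)_n:=\bigoplus_{|\vec s|=n}\Q[\vec s]$, with $(\calhd)_0=\Q{\bf1}$. The product $\shapt$ is graded because the isomorphism $\rho$ to $(\Q\langle x_0,x_1\rangle x_1,\shap)$ is degree-preserving (weight $=$ word length), and $\descop$ is graded because $\pi$ is degree-preserving and $\fraccop$ satisfies the grading condition Eq.~\meqref{eq:cograde}; connectedness is immediate since $\ker\vep_{\ge1}=\bigoplus_{n\ge1}(\calhd)_n$. Finally, a connected graded bialgebra is automatically a Hopf algebra (the antipode is built by the standard recursion, cf.\ the citation used in Theorem~\mref{t:fchDeltalocalhopf}).

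**Main obstacle.** The only genuinely non-routine point is bookkeeping the locality quantifiers: the locality identities in $\Q\fch$ (coassociativity, multiplicativity of $\fraccop$, grading of $\fraccop$) are stated with polar-set side conditions, so before pushing them forward I must verify that the relevant arguments lie in the right polar sets — concretely, that when I evaluate $\fraccop(f\cdot g)$ I have chosen representatives $f,g$ with disjoint variables, and that for coassociativity a single Chen fraction $f$ always satisfies $f\top f$'s requirements in the internal proof (which it does, since $\fraccop(f)$ depends only on $f$'s variables). Once this is arranged, surjectivity of $\pi$ does all the remaining work, and no new computation beyond what is already in Section~\mref{s:locchen} is needed. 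I expect the proof to consist essentially of writing down the commuting-square diagram chases for the four axioms plus the one-line grading and connectedness checks.
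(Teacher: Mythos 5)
Your proposal is correct and follows essentially the same route as the paper: transfer the locality bialgebra structure of $(\Q\fch,\top,\cdot,\fraccop,1,\varepsilon)$ along the surjection $\pi$ using Eq.~\meqref{eq:pidelta}, Corollary~\mref{coro:pi}, Lemma~\mref{lem:only 1} and Proposition~\mref{lem:Deltahomomorphism}, then conclude via the weight grading, connectedness, and the standard fact that connected graded bialgebras are Hopf algebras. The only differences are cosmetic (you make the counit identity $\vep_{\ge 1}\pi=\varepsilon$ explicit, and your appeal to Remark~\mref{rem:piotimespiishomo} in the coassociativity step is unnecessary there, though it is needed where you use it for multiplicativity of $\descop$).
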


\begin{proof}
First by Eq.~\meqref{eq:pidelta}, we have
$$	(\id\otimes\descop)\descop\pi =(\id\otimes \descop)(\pi\otimes \pi)\fraccop=(\pi\otimes\descop\pi)\fraccop
	=(\pi\otimes\pi\otimes\pi)(\id\otimes \fraccop)\fraccop,
$$	
and
$$ 	(\descop\otimes \id)\descop\pi =(\descop\otimes \id)(\pi\otimes \pi)\fraccop=(\descop\pi\otimes \pi)\fraccop
	=(\pi\otimes\pi\otimes\pi)(\fraccop\otimes \id)\fraccop.
$$
Thus the coassiciativity of $\descop$ follows from that of $\fraccop$ and the surjectivity of $\pi$.

We next  check that $\vep_{\ge 1}$ is an algebra  homomorphism  on basis elements.
For $\vec s\in \Z_{\ge 1}^k$, $\vec t\in \Z_{\ge 1}^{\ell}$, $k,\ell\ge 1$, we have
$$\vep_{\ge 1}({\bf1}\shapt {\bf 1})=1=\vep_{\ge 1}({\bf1})\vep_{\ge 1}({\bf1}),
$$
$$\vep_{\ge 1}({\bf1}\shapt [\vec s])=\vep_{\ge 1}([\vec s]\shapt {\bf 1})=\vep_{\ge 1}([\vec s])=0=\vep_{\ge 1}([\vec s])\vep_{\ge 1}({\bf 1}),
$$
$$\vep_{\ge 1}([\vec s]\shapt [\vec t])=0=\vep_{\ge 1}([\vec s])\vep_{\ge 1}([\vec t]).
$$

We then check that the coproduct  $\descop$ is an algebra homomorphism. For any $x\in \calhd$,
$$\descop(x\shapt {\bf 1})=\descop({\bf1}\shapt x)=\descop(x)=\descop({\bf1})\shapt \descop(x)=\descop(x)\shapt \descop({\bf1}).
$$
For basis elements $[s_1,\cdots, s_k]$ and $[t_1,\cdots, t_\ell]$, let $\{i_1,\cdots,i_k,j_1,\cdots,j_\ell\}$
be any $k+\ell$ distinct elements in $\Z_{\ge 1}$. Then by Lemma \mref{lem:only 1}, Lemma \mref{lem:Deltahomomorphism}, Corollary \mref {coro:pi} and  Remark \mref{rem:piotimespiishomo}, we obtain
\begin{align*}
 &\descop([s_1,\cdots,s_k]\shapt [t_1,\cdots,t_\ell])=(\pi\otimes\pi)\fraccop(\wvec{s_1,\cdots,s_k}{x_{i_1},\cdots,x_{i_k}}\cdot\wvec{t_1,\cdots,t_\ell}{x_{j_1},\cdots,x_{j_\ell}})\\
=&(\pi\otimes\pi)\big(\fraccop(\wvec{s_1,\cdots,s_k}{x_{i_1},\cdots,x_{i_k}})\cdot\fraccop(\wvec{t_1,\cdots,t_\ell}{x_{j_1},\cdots,x_{j_\ell}})\big)\\
=&\big((\pi\otimes\pi)\fraccop(\wvec{s_1,\cdots,s_k}{x_{i_1},\cdots,x_{i_k}})\big)\shapt \big((\pi\otimes \pi)\fraccop(\wvec{t_1,\cdots,t_\ell}{x_{j_1},\cdots,x_{j_\ell}})\big)\\
=&\descop([s_1,\cdots,s_k])\shapt\descop([t_1,\cdots,t_\ell]),
\end{align*}
as needed.

In summary, we have proved that the quintuple $(\calhd, \shapt, \descop, {\bf1}, \vep_{\ge 1})$  is a bialgebra.

We finally show that $(\calhd, \shapt, {\bf 1}, \descop, \vep_{\ge 1})$ is a connected graded bialgebra.
The space $\calhd$ has the grading by weight, that is,
$$\calhd=\bigoplus_{m=0}^{\infty}\Q H_m, \text{ with } \left\{\begin{array}{l} H_0:={\bf 1},\\
H_m:=\big\{[s_1,\cdots,s_k]\in \Z_{\ge 1}^k\,\big|\, s_1+\cdots+s_k=m, k\geq 1\big\}.
\end{array} \right . $$
By the definition  of  $\shapt$,  we have
$$ H_m\shapt H_n\subseteq \Q H_{m+n}, \quad m\ge 0, n\ge 0.$$
Moreover, since  $\pi: \Q\fch\longrightarrow \calhd$ is grading preserving,
we have
$$\descop(H_m)=(\pi\otimes \pi)\fraccop(F_m)\subseteq \bigoplus_{p+q=m} \pi(\Q F_p)\otimes\pi(\Q F_q)\subseteq \bigoplus_{p+q=m} \Q H_p\otimes\Q H_q.
$$

Therefore,  $(\calhd, \shapt, {\bf 1}, \descop, \vep_{\ge 1})$ is  a  connected  graded  bialgebra.
Consequently, as is well-known (see~\mcite{GG,Md} for example), $(\calhd, \shapt, {\bf 1}, \descop, \vep_{\ge 1})$  is a Hopf algebra.
\end{proof}

\subsection {Identifying the coproducts}
\mlabel{ss:twocoprod}
In this last part, we show that the descended coproduct $\descop$ satisfies the recursion in Definition~\mref{d:reccop} and thus is identified with the recursively defined coproduct $\reccop$ there.

\begin{lemma}
\mlabel{lem:partialanddelta}
\begin{enumerate}
  \item  For  the  constant  function  $1\in \fch$, we have
  $$\pi  \p_i(1)=\delta_i  \pi(1).$$
  \mlabel{i:pardel1}
  \item  For  $\wvec{s_1,\cdots, s_k}{x_{\ell_1}, \cdots, x_{\ell_k}}\in \fch$ and  $i\ge 1$, we have
 $$\pi  \p_{\ell_i}\big(\wvec{s_1,\cdots, s_k}{x_{\ell_1}, \cdots, x_{\ell_k}}\big)=\delta_i  \pi\big(\wvec{s_1,\cdots, s_k}{x_{\ell_1}, \cdots, x_{\ell_k}}\big).
$$
\mlabel{i:pardel2}
\end{enumerate}
\end{lemma}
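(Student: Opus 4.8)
The plan is a direct computation on the linear basis of $\Q\fch$, using the explicit form of the map $\pi=\rho^{-1}\psi\phi$ of Corollary~\mref{coro:pi} on Chen fractions --- namely $\pi(1)={\bf 1}$ and $\pi\big(\wvec{s_1,\cdots,s_k}{x_{\ell_1},\cdots,x_{\ell_k}}\big)=[s_1,\cdots,s_k]$ --- together with the $\Q$-linearity of $\pi$. Part~\meqref{i:pardel1} is then immediate: the partial derivatives of Eq.~\meqref{eq:partial} annihilate the constant function, so $\pi\,\p_i(1)=\pi(0)=0$, while $\delta_i\,\pi(1)=\delta_i({\bf 1})=0$ by the defining formula~\meqref{eq:deltadef}; hence the two sides coincide.

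For part~\meqref{i:pardel2} I would first compute the left-hand side inside $\Q\fch$. Writing $y_j:=x_{\ell_j}+x_{\ell_{j+1}}+\cdots+x_{\ell_k}$ for $1\le j\le k$, so that $\wvec{s_1,\cdots,s_k}{x_{\ell_1},\cdots,x_{\ell_k}}=\big(y_1^{s_1}y_2^{s_2}\cdots y_k^{s_k}\big)^{-1}$, the key point is the variable-dependence bookkeeping: for $\p_{\ell_i}$ to act on this fraction one needs $1\le i\le k$, and then the variable $x_{\ell_i}$ occurs in precisely the nested linear forms $y_1,\ldots,y_i$ and in none of $y_{i+1},\ldots,y_k$. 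Hence the product and chain rules give
$$\p_{\ell_i}\,\wvec{s_1,\cdots,s_k}{x_{\ell_1},\cdots,x_{\ell_k}}=-\frac{\partial}{\partial x_{\ell_i}}\,\frac{1}{y_1^{s_1}\cdots y_k^{s_k}}=\sum_{j=1}^{i}\frac{s_j}{y_1^{s_1}\cdots y_j^{s_j+1}\cdots y_k^{s_k}}=\sum_{j=1}^{i}s_j\,\wvec{s_1,\cdots,s_j+1,\cdots,s_k}{x_{\ell_1},\cdots,x_{\ell_k}},$$
each summand again lying in $\fch$. Applying $\pi$ and using linearity then yields
$$\pi\,\p_{\ell_i}\big(\wvec{s_1,\cdots,s_k}{x_{\ell_1},\cdots,x_{\ell_k}}\big)=\sum_{j=1}^{i}s_j\,[s_1,\cdots,s_j+1,\cdots,s_k]=\delta_i\big([s_1,\cdots,s_k]\big)=\delta_i\,\pi\big(\wvec{s_1,\cdots,s_k}{x_{\ell_1},\cdots,x_{\ell_k}}\big),$$
where the middle equality is exactly the defining formula~\meqref{eq:deltadef} for $\delta_i$ in the case $i\le k$. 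This establishes~\meqref{i:pardel2}.

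The only real content here --- and the step to be careful about --- is the bookkeeping: recognizing that $x_{\ell_i}$ enters the denominator of the Chen fraction through precisely the first $i$ of the nested linear forms, which is exactly what makes $\p_{\ell_i}$ match the combinatorial operator $\delta_i$ term by term; everything else is unwinding the definitions of $\p_i$, $\pi$ and $\delta_i$. As an alternative to the hands-on computation, one could instead derive the displayed formula for $\p_{\ell_i}$ from Eq.~\meqref{eq:chenfractionpartial} together with the identity $\p_{\ell_i}=\sum_{j=1}^{i} d_{\ell_j,\ell_{j-1}}$ (valid on fractions supported on the variables $x_{\ell_1},\ldots,x_{\ell_k}$, with $\ell_0$ a fresh index), and then compare with Eqs.~\meqref{eq:ptos} and~\meqref{eq:dtop}; this route makes transparent the parallel between the derivations $d_{i,j}$ of Eq.~\meqref{eq:partial2} on Chen fractions and the operators $\dd_i$ on $\calhd$.
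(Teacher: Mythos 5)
Your computation for part~\meqref{i:pardel1} and for part~\meqref{i:pardel2} in the range $1\le i\le k$ is exactly the paper's argument: the chain-rule bookkeeping that $x_{\ell_i}$ enters precisely the nested forms $y_1,\ldots,y_i$ gives $\p_{\ell_i}\wvec{s_1,\cdots,s_k}{x_{\ell_1},\cdots,x_{\ell_k}}=\sum_{j=1}^i s_j\wvec{s_1,\cdots,s_j+1,\cdots,s_k}{x_{\ell_1},\cdots,x_{\ell_k}}$, and applying $\pi$ matches the defining formula~\meqref{eq:deltadef} of $\delta_i$ term by term.

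The one genuine omission is the case $i>k$, which the statement includes (it is asserted for all $i\ge 1$) and which the paper treats as a separate case (b). Your phrase ``for $\p_{\ell_i}$ to act on this fraction one needs $1\le i\le k$'' in effect discards this case rather than proving it: for $i>k$ the index $\ell_i$ is understood to be a further index distinct from $\ell_1,\ldots,\ell_k$, so $x_{\ell_i}$ does not occur in the fraction, $\p_{\ell_i}$ annihilates it, and on the other side $\delta_i[s_1,\cdots,s_k]=0$ by Eq.~\meqref{eq:deltadef} since $i>k$; hence both sides vanish. This trivial case is not decorative: it is precisely what gets invoked later, in the third identity of Lemma~\mref{lem:tensorpartialanddelta} (where after the change of variables one may have $i-k>m$) and in Case 2 of Lemma~\mref{lem:deltacommuteDeltage1}. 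Adding that one line makes your proof complete and identical in substance to the paper's.
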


\begin{proof} \eqref{i:pardel1} We directly check that $\pi  \p_i(1)=0$ and  $\delta_i  \pi(1)=\delta_i({\bf 1})=0$.

\eqref{i:pardel2} Let $\wvec{s_1,\cdots, s_k}{x_{\ell_1}, \cdots, x_{\ell_k}} \in \fch$.
\begin{enumerate}
  \item[(a)] If  $i\le k$, then
\begin{align*}
\pi  \p_{\ell_i}(\wvec{s_1,\cdots, s_k}{x_{\ell_1}, \cdots, x_{\ell_k}})=&\pi\Big(\sum_{j=1}^is_j\wvec{s_1,\cdots,s_j+1,\cdots,s_k}{x_{\ell_1},\cdots,x_{\ell_j},\cdots,x_{\ell_k}}\Big)\\
=&\sum_{j=1}^is_j[s_1,\cdots,s_j+1,\cdots,s_k]\\
=&\delta_i([s_1,\cdots, s_k])\\
=&\delta_i \pi(\wvec{s_1,\cdots, s_k}{x_{\ell_1}, \cdots, x_{\ell_k}}).
\end{align*}
\item[(b)] If $i>k$, then
$$ \hspace{4cm}
\pi  \p_{\ell_i}(\wvec{s_1,\cdots, s_k}{x_{\ell_1}, \cdots, x_{\ell_k}})=0
=\delta_i  \pi(\wvec{s_1,\cdots, s_k}{x_{\ell_1}, \cdots, x_{\ell_k}}).
\hspace{4cm}
\qedhere
$$
\end{enumerate}
\end{proof}

\begin{lemma}
 \mlabel{lem:tensorpartialanddelta}
Let  $B=1$ and $A=1$, or $A=\delta_j$ and $B=\p_{\ell_j}$. Then for Chen  fractions $\wvec{s_1,\cdots, s_k}{x_{\ell_1}, \cdots, x_{\ell_k}}$, $\wvec{t_1,\cdots,t_m}{x_{\ell_{k+1}}, \cdots, x_{\ell_{k+m}}}$ with distinct $\ell _1, \cdots, \ell_{k+m}$,  and $i,j\ge 1$, we have
\begin{align*} (\pi\otimes \pi)(B\otimes \p_{\ell_i})(1\otimes \wvec{t_1,\cdots,t_m}{x_{\ell_{k+1}}, \cdots, x_{\ell_{k+m}}})
  =&(A\cks \delta_i)(\pi\otimes \pi)(1\otimes \wvec{t_1,\cdots,t_m}{x_{\ell_{k+1}}, \cdots, x_{\ell_{k+m}}}),\\
(\pi\otimes \pi)(B\otimes \p_{\ell_i})(\wvec{s_1,\cdots, s_k}{x_{\ell_1}, \cdots, x_{\ell_k}}\otimes 1)
  =&(A\cks \delta_i)(\pi\otimes \pi)(\wvec{s_1,\cdots, s_k}{x_{\ell_1}, \cdots, x_{\ell_k}}\otimes 1), \\
(\pi\otimes \pi)(B\otimes \p_{\ell_i})(\wvec{s_1,\cdots, s_k}{x_{\ell_1}, \cdots, x_{\ell_k}}\otimes \wvec{t_1,\cdots,t_m}{x_{\ell_{k+1}}, \cdots, x_{\ell_{k+m}}})
  =&(A\cks \delta_i)(\pi\otimes \pi)(\wvec{s_1,\cdots, s_k}{x_{\ell_1}, \cdots, x_{\ell_k}}\otimes \wvec{t_1,\cdots,t_m}{x_{\ell_{k+1}}, \cdots, x_{\ell_{k+m}}}).
  \end{align*}
\end{lemma}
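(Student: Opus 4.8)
The plan is to expand both sides of each identity into their two tensor factors and match them using Lemma~\mref{lem:partialanddelta}, the only real work being to keep track of the index shift built into the shifted tensor $\cks$.

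Since $\pi\otimes\pi$, the derivation $\p_{\ell_i}$, the operator $\delta_i$ and the shifted tensor $A\cks\delta_i$ all act on a simple tensor $u\otimes v$ factorwise, I would first record the two expansions
$$(\pi\otimes\pi)(B\otimes\p_{\ell_i})(u\otimes v)=\pi B(u)\otimes\pi\,\p_{\ell_i}(v),\qquad (A\cks\delta_i)(\pi(u)\otimes\pi(v))=A\,\pi(u)\otimes\delta_{\,i-p}\,\pi(v),$$
where $p=\dep\pi(u)$ is the number of variables of $u$, namely $p=0$ if $u=1$ and $p=k$ if $u=\wvec{s_1,\cdots,s_k}{x_{\ell_1},\cdots,x_{\ell_k}}$. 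So it suffices to check, for each of the two allowed pairs $(A,B)$, that $\pi B(u)=A\,\pi(u)$ on the first factor and $\pi\,\p_{\ell_i}(v)=\delta_{\,i-p}\,\pi(v)$ on the second.

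The first-factor equality is immediate: for $(A,B)=(1,1)$ it reads $\pi(u)=\pi(u)$, and for $(A,B)=(\delta_j,\p_{\ell_j})$ it is precisely Lemma~\mref{lem:partialanddelta}, including the degenerate case $u=1$ and the case where $\ell_j$ is not a variable of $u$, in which both sides vanish. For the second-factor equality: if $v=1$ (this gives the second displayed identity), then $\p_{\ell_i}(1)=0$ and $\delta_{\,i-p}({\bf1})=0$, so both sides are $0$. If $v=\wvec{t_1,\cdots,t_m}{x_{\ell_{k+1}},\cdots,x_{\ell_{k+m}}}$, apply Lemma~\mref{lem:partialanddelta} to $v$ with its variables indexed in order: since the $i$-th global variable, when it lies in $v$, is the $(i-p)$-th variable of $v$, this yields $\pi\,\p_{\ell_i}(v)=\delta_{\,i-p}\,\pi(v)$ as long as $\ell_i$ is among the variables of $v$; and if $\ell_i$ is not a variable of $v$, then $i-p\le 0$ or $i-p>m=\dep\pi(v)$, so both $\p_{\ell_i}(v)$ and $\delta_{\,i-p}\pi(v)$ vanish by the conventions $\delta_{\le 0}=0$ and $\delta_j=0$ on depth-$m$ elements for $j>m$. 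Recombining the two factors then gives the first and third identities as well.

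The main obstacle, such as it is, is exactly this index-shift bookkeeping: one has to be sure that the shift by $\dep\pi(u)$ prescribed by $A\cks\delta_i$ is the same as the shift incurred when passing from the global labelling $\ell_1,\dots,\ell_{k+m}$ to the intrinsic position of a variable inside $v$, and that this stays correct in every boundary case ($u=1$, $v=1$, or $\ell_i$ outside the variables of $v$). Once these cases are written out, each of the three identities is a one-line check.
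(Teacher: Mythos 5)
Your proposal is correct and takes essentially the same route as the paper: both expand the operators factorwise on simple tensors, reduce each factor to Lemma~\mref{lem:partialanddelta}, and match the depth shift $i\mapsto i-\dep\pi(u)$ built into $\cks$ against the position shift of $x_{\ell_i}$ inside the second factor, with the boundary cases ($u=1$, $v=1$, or $\ell_i$ outside the variables of $v$) handled by the conventions $\delta_{\le 0}=0$ and $\delta_r=0$ beyond the depth. The paper organizes the same bookkeeping as an explicit case analysis with a change of variables invoking Lemma~\mref{lem:nodependchoice}, which your intrinsic re-indexing of the second factor's variables accomplishes equally well.
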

Here the arguments of the functions in each equality need to be specified in order to accurately define the actions of the partial derivatives.

\begin{proof} First let $B=1$ and $A=1$. Then by  Lemma \mref{lem:partialanddelta}, we have
\begin{align*}
  &(\pi\otimes \pi)(\id\otimes \p_{\ell_i})(1\otimes \wvec{t_1,\cdots,t_m}{x_{\ell_{k+1}}, \cdots, x_{\ell_{k+m}}})={\bf1}\otimes  \pi\p_{\ell_i}(\wvec{t_1,\cdots,t_m}{x_{\ell_{k+1}}, \cdots, x_{\ell_{k+m}}})\\
 =&{\bf1}\otimes  \delta_i\pi(\wvec{t_1,\cdots,t_m}{x_{\ell_{k+1}}, \cdots, x_{\ell_{k+m}}})\\
 =&(\id\cks \delta_i)(\pi\otimes \pi)(1\otimes \wvec{t_1,\cdots,t_m}{x_{\ell_{k+1}}, \cdots, x_{\ell_{k+m}}}).
  \end{align*}

Next let $B=\p_{\ell_j}$ and $ A=\delta_j$. Then we have
$$(\pi\otimes \pi)(\p_{\ell_j}\otimes \p_{\ell_i})\big(1\otimes \wvec{t_1,\cdots,t_m}{x_{\ell_{k+1}}, \cdots, x_{\ell_{k+m}}}\big)=0
 =(\delta_j\cks \delta_i)(\pi\otimes \pi)(1\otimes \wvec{t_1,\cdots,t_m}{x_{\ell_{k+1}}, \cdots, x_{\ell_{k+m}}}).
$$

By the definitions  of  $\p_{\ell_i}$  and  $\delta_i$, it is clear that
$$(\pi\otimes \pi)(B\otimes \p_{\ell_i})(\wvec{s_1,\cdots, s_k}{x_{\ell_1}, \cdots, x_{\ell_k}}\otimes 1)=0 =(A\cks \delta_i)(\pi\otimes \pi)(\wvec{s_1,\cdots, s_k}{x_{\ell_1}, \cdots, x_{\ell_k}}\otimes 1).
$$

By Lemma \mref{lem:partialanddelta},
$$\pi B(\wvec{s_1,\cdots, s_k}{x_{\ell_1}, \cdots, x_{\ell_k}})=A  \pi(\wvec{s_1,\cdots, s_k}{x_{\ell_1}, \cdots, x_{\ell_k}}).
$$
So
\begin{align*}
(\pi\otimes \pi)(B\otimes \p_{\ell_i})(\wvec{s_1,\cdots, s_k}{x_{\ell_1}, \cdots, x_{\ell_k}}\otimes \wvec{t_1,\cdots,t_m}{x_{\ell_{k+1}}, \cdots, x_{\ell_{k+m}}})=&(\pi\otimes \pi)\big(B(\wvec{s_1,\cdots, s_k}{x_{\ell_1}, \cdots, x_{\ell_k}})\otimes \p_{\ell_i}(\wvec{t_1,\cdots,t_m}{x_{\ell_{k+1}}, \cdots, x_{\ell_{k+m}}})\big)\\
=&A([s_1,\cdots, s_k])\otimes \pi\p_{\ell_i}(\wvec{t_1,\cdots,t_m}{x_{\ell_{k+1}}, \cdots, x_{\ell_{k+m}}}).
\end{align*}
Let $p_1=\ell_{k+1}$, $\ldots$, $p_m=\ell_{m+k}$, that is $\ell_i=p_{i-k}$. Then
$$(\pi\otimes \pi)(B\otimes \p_{\ell_i})(\wvec{s_1,\cdots, s_k}{x_{\ell_1}, \cdots, x_{\ell_k}}\otimes \wvec{t_1,\cdots,t_m}{x_{\ell_{k+1}}, \cdots, x_{\ell_{k+m}}})\\
=A([s_1,\cdots, s_k])\otimes \pi\p_{p_{i-k}}(\wvec{t_1,\cdots,t_m}{x_{p_{1}}, \cdots, x_{p_{m}}}).
$$
By  Lemma  \mref{lem:nodependchoice} and a change of variables,
\begin{align*}
 (A\cks \delta_i)(\pi\otimes \pi)(\wvec{s_1,\cdots, s_k}{x_{\ell_1}, \cdots, x_{\ell_k}}\otimes \wvec{t_1,\cdots,t_m}{x_{\ell_{k+1}}, \cdots, x_{\ell_{k+m}}})=&A([s_1,\cdots, s_k])\otimes \delta_{i-k}\pi(\wvec{t_1,\cdots,t_m}{x_{\ell_{k+1}}, \cdots, x_{\ell_{k+m}}})\\
=&A([s_1,\cdots, s_k])\otimes \delta_{i-k}\pi(\wvec{t_1,\cdots,t_m}{x_{p_{1}}, \cdots, x_{p_{m}}}).
\end{align*}
By  Lemma \mref{lem:partialanddelta},  we  have
\begin{align*}
  &(\pi\otimes \pi)(B\otimes \p_{\ell_i})(\wvec{s_1,\cdots, s_k}{x_{\ell_1}, \cdots, x_{\ell_k}}\otimes \wvec{t_1,\cdots,t_m}{x_{\ell_{k+1}}, \cdots, x_{\ell_{k+m}}})=(A\cks \delta_i)(\pi\otimes \pi)(\wvec{s_1,\cdots, s_k}{x_{\ell_1}, \cdots, x_{\ell_k}}\otimes \wvec{t_1,\cdots,t_m}{x_{\ell_{k+1}}, \cdots, x_{\ell_{k+m}}}),
  \end{align*}
as needed.
\end{proof}

\begin{lemma}
\mlabel{lem:deltacommuteDeltage1}
The coproduct $\descop: \calhd\longrightarrow \calhd\otimes \calhd$ is a coderivation:
$$ \descop \delta_i=(\id\cks \delta_i+\delta_i\otimes \id)\descop.
$$
\end{lemma}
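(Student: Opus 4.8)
The plan is to deduce the coderivation identity for $\descop$ from the corresponding one for $\fraccop$ by transporting along the surjective locality algebra homomorphism $\pi\colon\Q\fch\to\calhd$, using the commuting square $\descop\pi=(\pi\ot\pi)\fraccop$ from Eq.~\meqref{eq:pidelta}, the coderivation relation $\fraccop\p_j=(\id\ot\p_j+\p_j\ot\id)\fraccop$ from Eq.~\meqref{eq:DeltaDer}, and Lemmas~\mref{lem:partialanddelta} and~\mref{lem:tensorpartialanddelta}, which were set up precisely for this step.

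Since the asserted identity is an equality of linear maps, I would first reduce to checking it on ${\bf1}$ and on each basis vector $[\vec s]=[s_1,\ldots,s_k]$ with $k\ge 1$. On ${\bf1}$ both sides vanish, because $\delta_i({\bf1})=0$ and $(\id\cks\delta_i+\delta_i\ot\id)({\bf1}\ot{\bf1})={\bf1}\ot\delta_i({\bf1})+\delta_i({\bf1})\ot{\bf1}=0$. For $[\vec s]$, fix distinct $\ell_1,\ldots,\ell_k\in\Z_{\ge1}$ and set $f:=\wvec{s_1,\ldots,s_k}{x_{\ell_1},\ldots,x_{\ell_k}}$, so that $\pi(f)=[\vec s]$. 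Then, combining Lemma~\mref{lem:partialanddelta} (which gives $\delta_i\pi(f)=\pi\p_{\ell_i}(f)$, the subscript $i$ matching the position of $x_{\ell_i}$ in the variable list of $f$), Eq.~\meqref{eq:pidelta}, and Eq.~\meqref{eq:DeltaDer} applied with index $\ell_i$, one obtains
\begin{align*}
\descop\delta_i([\vec s])&=\descop\delta_i\pi(f)=\descop\pi\p_{\ell_i}(f)=(\pi\ot\pi)\fraccop\p_{\ell_i}(f)\\
&=(\pi\ot\pi)(\id\ot\p_{\ell_i}+\p_{\ell_i}\ot\id)\fraccop(f).
\end{align*}
It then remains to show this last expression equals $(\id\cks\delta_i+\delta_i\ot\id)(\pi\ot\pi)\fraccop(f)=(\id\cks\delta_i+\delta_i\ot\id)\descop([\vec s])$.

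For this, recall from the construction of $\fraccop$ in Definition~\mref{d:chencop} --- the deconcatenation of $\wvec{1,\ldots,1}{x_{\ell_1},\ldots,x_{\ell_k}}$ in Eq.~\meqref{eq:deconcan}, followed by iterated applications of the commuting operators $d_{\ell_q,\ell_{q-1}}=\p_{\ell_q}-\p_{\ell_{q-1}}$, each of which raises an exponent without changing the variable list of a Chen fraction, so that the prefix/suffix form of tensors is preserved at each step --- that every term of $\fraccop(f)$ is an elementary tensor $g\ot h$ with $g$ a Chen fraction or $1$ in the variables $x_{\ell_1},\ldots,x_{\ell_j}$ (in this order) and $h$ a Chen fraction or $1$ in $x_{\ell_{j+1}},\ldots,x_{\ell_k}$ (in this order), for some $0\le j\le k$; in particular $\dep(\pi(g))=j$. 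I would then evaluate $(\pi\ot\pi)(\id\ot\p_{\ell_i}+\p_{\ell_i}\ot\id)$ term by term. On the $\id\ot\p_{\ell_i}$ piece, Lemma~\mref{lem:tensorpartialanddelta} with $A=B=1$ yields $(\pi\ot\pi)(\id\ot\p_{\ell_i})(g\ot h)=(\id\cks\delta_i)(\pi\ot\pi)(g\ot h)$; concretely, $\p_{\ell_i}$ acts on $h$ at position $i-j$, and Lemma~\mref{lem:partialanddelta} rewrites $\pi\p_{\ell_i}(h)$ as $\delta_{i-j}\pi(h)=\delta_{i-\dep(\pi(g))}\pi(h)$, both sides vanishing when $i\le j$ or $i>k$. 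On the $\p_{\ell_i}\ot\id$ piece, the action is nonzero only when $x_{\ell_i}$ occurs in $g$, i.e.\ when $1\le i\le j$, in which case Lemma~\mref{lem:partialanddelta} gives $\pi\p_{\ell_i}(g)=\delta_i\pi(g)$, while for $i>j$ both $\p_{\ell_i}(g)$ and $\delta_i\pi(g)$ vanish (the latter since $\dep(\pi(g))=j<i$); hence $(\pi\ot\pi)(\p_{\ell_i}\ot\id)(g\ot h)=(\delta_i\ot\id)(\pi\ot\pi)(g\ot h)$ in every case. Summing over all terms of $\fraccop(f)$ gives the desired identity for every $i$; equivalently, this also yields $\descop\dd_i=(\id\cks\dd_i+\dd_i\ot\id)\descop$ via Eq.~\meqref{eq:dtop}.

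I expect the only genuinely delicate point to be the bookkeeping in the last paragraph: one must track which variable $x_{\ell_i}$ lands in which tensor factor of $\fraccop(f)$ and match its positional index to the shifted operator $\delta_{i-\dep}$ encoded in the $\cks$ notation. This is exactly why the precise prefix/suffix structure of the terms of $\fraccop(f)$ --- with the variables retained in their original order, so that Lemma~\mref{lem:partialanddelta} applies verbatim to each tensor factor --- must be read off from the recursion defining $\fraccop$; everything else in the argument is routine assembly of the cited identities together with the surjectivity of $\pi$.
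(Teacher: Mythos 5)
Your proposal is correct and follows essentially the same route as the paper: transfer the coderivation identity from $\fraccop$ to $\descop$ via $\descop\,\pi=(\pi\otimes\pi)\fraccop$, Eq.~\eqref{eq:DeltaDer}, and Lemmas~\ref{lem:partialanddelta} and \ref{lem:tensorpartialanddelta}, checking on basis elements with a chosen Chen fraction representative. The only difference is cosmetic: you spell out the prefix/suffix variable structure of the terms of $\fraccop(f)$ to justify the final exchange of $(\pi\otimes\pi)$ with $\id\otimes\p_{\ell_i}+\p_{\ell_i}\otimes\id$ term by term (a point the paper delegates to Lemma~\ref{lem:tensorpartialanddelta}), and you treat the cases $i\le k$ and $i>k$ uniformly where the paper separates them.
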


\begin{proof}
It is obvious that $(\id\cks \delta_i+\delta_i\otimes \id)\descop({\bf 1}) =0=\descop \delta_i({\bf 1})$. Next let $(s_1,\cdots, s_k)\in \Z_{\ge 1}^k$.

{\bf Case 1:} If $1\le i\leq k$, then for distinct $\ell_1, \cdots, \ell_k \in \Z _{\ge  1}$, by  Lemma  \mref{lem:tensorpartialanddelta} and Eq.(\ref{eq:DeltaDer}), we have
\begin{align*}
\descop \delta_i([s_1,\cdots, s_k])
=&\sum_{j=1}^i s_j\descop([s_1,\cdots,s_j+1,\cdots, s_k])\\
=&\sum_{j=1}^i s_j(\pi\otimes \pi)\fraccop(\wvec{s_1,\cdots,s_j+1,\cdots, s_k}{x_{\ell_1},\cdots, x_{\ell_j},\cdots, x_{\ell_k}})\\
=&(\pi\otimes \pi)\fraccop \p_{\ell_i}(\wvec{s_1,\cdots,s_i,\cdots, s_k}{x_{\ell_1},\cdots, x_{\ell_i},\cdots, x_{\ell_k}})\\
=&(\pi\otimes \pi)(\id\otimes \p_{\ell_i}+\p_{\ell_i}\otimes \id)\fraccop(\wvec{s_1,\cdots,s_i,\cdots, s_k}{x_{\ell_1},\cdots, x_{\ell_i},\cdots, x_{\ell_k}})\\
=&(\id\cks \delta_i+\delta_i\otimes \id)(\pi\otimes \pi)\fraccop(\wvec{s_1,\cdots,s_i,\cdots, s_k}{x_{\ell_1},\cdots, x_{\ell_i},\cdots, x_{\ell_k}})\\
=&(\id\cks \delta_i+\delta_i\otimes \id)\descop ([s_1,\cdots, s_k]).
\end{align*}
{\bf Case 2:} If  $i>k$, then $\descop \delta_i([s_1,\cdots, s_k])=0$. By  Lemma  \mref{lem:tensorpartialanddelta} and Eq.(\ref{eq:DeltaDer}), we have
\begin{align*}
(\id\cks \delta_i+\delta_i\otimes \id)\descop([s_1,\cdots, s_k])=&(\id\cks \delta_i+\delta_i\otimes \id)(\pi\otimes \pi)\fraccop(\wvec{s_1,\cdots,s_k}{x_{\ell_1},\cdots,x_{\ell_k}})\\
=&(\pi\otimes \pi)(\id\otimes \p_{\ell_i}+\p_{\ell_i}\otimes \id)\fraccop(\wvec{s_1,\cdots,s_k}{x_{\ell_1},\cdots,x_{\ell_k}})\\
=&(\pi\otimes \pi)\fraccop\p_{\ell_i}(\wvec{s_1,\cdots,s_k}{x_{\ell_1},\cdots,x_{\ell_k}})\\
=&0.
\end{align*}
This completes the proof.
\end{proof}

Now we can complete the proof of Theorem~\mref{thm:shuhopf}.
\begin{prop}
The quintuple $(\calhd, \shapt, {\bf 1}, \reccop, \vep_{\geq 1})$ in Theorem~\mref{thm:shuhopf} coincides with the quintuple $(\calhd, \shapt, {\bf 1}, \descop, \vep_{\ge 1})$	in Theorem~\mref{t:deschopf} and hence is a Hopf algebra.
\mlabel{p:copcompatible}	
\end{prop}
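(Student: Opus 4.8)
The plan is to show that the descended coproduct $\descop$ from Theorem~\mref{t:deschopf} obeys the very recursion that defines $\reccop$ in Definition~\mref{d:reccop}. Since Proposition~\mref{p:uniquecoproduct} guarantees that this recursion determines a unique linear map $\calhd\to\calhd\otimes\calhd$, it then forces $\descop=\reccop$. As $\descop$ was already shown in Theorem~\mref{t:deschopf} to make $(\calhd,\shapt,{\bf1},\descop,\vep_{\ge1})$ a connected graded bialgebra, hence a Hopf algebra, the same conclusion holds for $(\calhd,\shapt,{\bf1},\reccop,\vep_{\ge1})$, which also completes the remaining part of Theorem~\mref{thm:shuhopf}. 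Note that the counit $\vep_{\ge1}$ is literally the same map in both quintuples, so nothing needs to be verified there.

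First I would dispatch clauses (i) and (ii) of Definition~\mref{d:reccop}. Clause (i), $\descop({\bf1})={\bf1}\otimes{\bf1}$, is immediate from the definition of $\descop$ in Eq.~\meqref{eq:descop}. For clause (ii), apply Eq.~\meqref{eq:deconcan} to the all-ones Chen fraction $\wvec{1,\cdots,1}{x_{i_1},\cdots,x_{i_k}}$ and push the resulting deconcatenation through $\pi\otimes\pi$; using $\pi(\wvec{1,\cdots,1}{x_{i_1},\cdots,x_{i_j}})=[\{1\}^j]$ and $\pi(1)={\bf1}$, this produces exactly $\sum_{j=0}^{k}[\{1\}^j]\otimes[\{1\}^{k-j}]$, as required.

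The substance is clause (iii), which the discussion following Definition~\mref{d:reccop} records as being equivalent to the shifted-coderivation identity $\descop\,\dd_i=(\id\cks\dd_i+\dd_i\otimes\id)\descop$ of Eq.~\meqref{eq:dalgrec2}. This is precisely what Lemma~\mref{lem:deltacommuteDeltage1} supplies for $\delta_i$, and subtracting the identities for $\delta_i$ and $\delta_{i-1}$ while invoking $\dd_i=\delta_i-\delta_{i-1}$ yields the $\dd_i$-form. Applying it to $[s_1,\ldots,s_k]$ and using Eq.~\meqref{eq:ptos}, i.e. $\dd_i[s_1,\ldots,s_k]=s_i[s_1,\ldots,s_{i-1},s_i+1,s_{i+1},\ldots,s_k]$, gives
\[
s_i\,\descop\big([s_1,\ldots,s_{i-1},s_i+1,s_{i+1},\ldots,s_k]\big)=\big(\id\cks\dd_i+\dd_i\otimes\id\big)\descop\big([s_1,\ldots,s_k]\big),
\]
which is Eq.~\meqref{eq:reccop3} after dividing by $s_i$. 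Hence $\descop$ satisfies all three clauses, so $\descop=\reccop$ by Proposition~\mref{p:uniquecoproduct}, and the proposition follows. I do not anticipate a genuine obstacle here: all the analytic and combinatorial work was already carried out in Lemmas~\mref{lem:partialanddelta}--\mref{lem:deltacommuteDeltage1} (via the partial derivatives on Chen fractions) and in Proposition~\mref{p:uniquecoproduct}; what remains is only the index bookkeeping of passing from the $\delta_i$-coderivation identity to the $\dd_i$-form that matches clause (iii).
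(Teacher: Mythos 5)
Your proposal is correct and takes essentially the same route as the paper: it deduces from Lemma~\mref{lem:deltacommuteDeltage1} that $\descop$ satisfies the boundary conditions and the recursion defining $\reccop$, concludes $\descop=\reccop$ by the uniqueness in Proposition~\mref{p:uniquecoproduct}, and imports the Hopf structure from Theorem~\mref{t:deschopf}. You simply make explicit some steps the paper leaves implicit, namely checking clauses (i)--(ii) via $\pi\otimes\pi$ and passing from the $\delta_i$-coderivation identity to the $\dd_i$-form used in Eq.~\meqref{eq:reccop3}.
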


\begin{proof}
By Lemma~\mref{lem:deltacommuteDeltage1}, $\descop$ satisfies the boundary conditions and the recursion that define $\reccop$ in Definition~\mref{d:reccop}. Thus the two coproducts are the same. Thus statement of the proposition holds and  $(\calhd, \shapt, {\bf 1}, \reccop, \vep_{\geq 1})$ is a Hopf algebra.
\end{proof}

\noindent
{\bf Acknowledgments.} This research is partially supported by
the NNSFC (12126354, 12471062).

\noindent
{\bf Declaration of interests. } The authors have no conflict of interest to declare that are relevant to this article.

\noindent
{\bf Data availability. } Data sharing is not applicable to this article as no data were created or analyzed.

\vspace{-.2cm}

\end{document}